\def\hi{\hat{\xi}}
\def\hh{\hat{h}}
\def\hhf{\hat{h}_f}
\def\hio{\hat \xi_0}
\def\hz{\hat{\zeta}}
\def\hx{\hat{x}}
\newcommand\ee{\mathbb{E}}
\newcommand\er{\mathbb{R}}
\newcommand\R{\mathbb{R}}
\newcommand\mfh{\mathfrak{h}}
\newcommand\ct{\mathcal{T}}
\newcommand\ind[1]{\mathbf{1}_{#1}}
\definecolor{jpred}{rgb}{0.99,0.1,0.1}
\newenvironment{Aproof}{\begin{proof}}{\end{proof}}
\let\footnote=\endnote
\title{Energy imbalance market call options and the valuation of storage}
\author{John Moriarty\footnote{School of Mathematical Sciences, Queen Mary University of London, London E1 4NS, UK, email: {j.moriarty@qmul.ac.uk}}
\and
Jan Palczewski\footnote{School of Mathematics, University of Leeds, Leeds LS2 9JT, UK, email: {J.Palczewski@leeds.ac.uk}}}
\newcounter{licznik}[section]
\newtheorem{Definition}[licznik]{DEFINITION}
\newtheorem{Theorem}[licznik]{THEOREM}
\newtheorem{Lemma}[licznik]{LEMMA}
\newtheorem{Corollary}[licznik]{COROLLARY}
\newtheorem{Remark}[licznik]{REMARK}
\begin{document}
\maketitle

 \begin{abstract}
The use of energy storage to balance electric grids is increasing and, with it, the importance of operational optimisation from the twin viewpoints of cost and system stability. In this paper we assess the \emph{real option value} of balancing reserve provided by an energy-limited storage unit. The contractual arrangement is a series of American-style call options in an energy imbalance market (EIM), physically covered and delivered by the store, and purchased by the power system operator. We take the EIM price as a general regular one-dimensional diffusion and impose natural economic conditions on the option parameters.
In this framework we derive the \emph{operational strategy} of the storage operator by solving two timing problems: when to purchase energy to load the store (to provide physical cover for the option) and when to sell the option to the system operator. We give necessary and sufficient conditions for the finiteness and positivity of the value function -- the total discounted cash flows generated by operation of the storage unit. We also provide a straightforward procedure for the numerical evaluation of the optimal operational strategy (EIM prices at which power should be purchased) and the value function. This is illustrated with an operational and economic analysis using data from the German Amprion EIM.
\medskip

\noindent \textbf{Keywords:} American option, real option, valuation of storage, energy imbalance market

\end{abstract}

\section{Introduction}
\label{sec:intro}

The security of power systems is managed in real time by the System Operator (SO), who coordinates electricity supply and demand in a manner that avoids fluctuations in frequency or disruption of supply (see, for example, \cite{nztso}). In addition the SO carries out planning work to ensure that supply can meet demand, including the procurement of non-energy or {\em ancillary} services such as {\em operating reserve}, the capacity to make near real-time adjustments to supply and demand. Physically such adjustments may be provided by the control of thermal generation, demand or, increasingly, by the use of energy storage \citep{Xu2016, gridefr}. These resources have strongly differing operating characteristics: when compared to thermal generation, for example, energy storage is energy limited but can respond much more quickly. Storage also has important time linkages, since each discharge necessitates a corresponding recharge at a later time.

The financial procurement of operating reserve has an {\em option} character, as capacity is reserved in advance and randomly called for, potentially multiple times, in real time \citep{Just2008}. This is reflected in a two-price mechanism, with a reservation  payment plus an additional utilisation payment each time the reserve is called for. Since the incentivisation and efficient use of operating reserve for  system balancing is of increasing importance with growing penetration of variable renewable generation \citep{king2011flexibility}, several SOs have recently introduced real-time {\em energy imbalance markets} (EIMs) in which operating reserve is pooled, including in Germany \citep{ocker2015german} and California \citep{caiso2016, Lenhart2016}. Such markets typically involve the submission of bids and offers from several providers for reserves running across multiple time periods, which are then accepted, independently in each period, in price order until the real-time balancing requirement is met. As one provider can potentially be called upon over multiple consecutive periods, this reserve procurement mechanism is not well suited to energy-limited reserves such as energy storage. However, storage-oriented solutions are being pioneered in a number of markets including a recent tender by the National Grid in the UK \citep{gridefr} and various trials by state SOs in the US \citep{Xu2016}.

In this paper we consider the SO's planning problem of designing operating reserve contracts for energy limited storage devices such as batteries. In contrast to previous work on the pricing and hedging of energy options where settlement is financial (see for example \cite{benth2008stochastic} and references therein), we take account of the \emph{physical settlement} required in system balancing, considering also the limited energy and time linkages of storage. 
Physical feedback effects are investigated by studying the operational policy of the storage or \emph{battery operator} (which we abbreviate BO). We use {\em real options analysis} (RO) which is the application of option pricing techniques to the valuation of non-financial or ``real'' investments with flexibility \citep{borison2005real, dixit1994investment}. We consider the energy storage unit as the real asset, together with the operational flexibility of the BO, who observes the EIM price in real time. Since we take account of both the requirements of the SO and the operational policy of the BO, our work may also be interpreted as a form of principal-agent analysis.

A key question in RO analyses is the specification of the driving randomness \citep{borison2005real}, which in this paper is the probability law of the EIM price process under its physical measure. We thus model the EIM price 
to resemble statistically the observed historical dynamics \citep{Pflug2009, Ghaffari2013}. Unlike the prices of financial assets, energy imbalance prices (in common with electricity spot prices and commodity prices more generally) typically have significant mean reversion which should be modelled.
However even the simplest mean reverting models are not amenable to analytical treatment, due to the form of their infinitesimal generators and the presence of special functions in the Laplace transforms of their hitting times. In the present work we mitigate this problem by constructing optimal strategies only for {\em certain values} of the initial EIM price $X_0$, which are sufficient to solve the operational problems under study. By targeting a restricted, but nevertheless provably optimal, set of solutions in this way we are able to simplify the analysis compared to earlier work \citep{moriarty2014american} and hence to obtain results for {\em any} regular diffusion EIM price process $(X_t)_{t \geq 0}$, including mean reverting processes.

Options on balancing reserve have previously been proposed for the hedging of forward contracts by renewable power generators \citep{Ghaffari2013}. In the latter work the option exercise is of European type, that is, the exercise date is fixed. In contrast American style options, in which exercise is possible at any time (see for example \cite{DeWeert2011}), are required for applications in the continuous balancing of power systems. 
We consider a contract for a fixed quantity of balancing reserve, thus addressing the limited nature of energy storage. Importantly such a contract offers a potentially efficient solution to the issue of {\em physical cover}, since options can be issued only when the ancillary service is physically available. We assume that the SO sets the option parameters, namely the option premia (that is, the reservation and utilisation payments) plus an EIM price level $x^*$ at which the option is exercised. The premia are constant in our setup, reflecting the fact that balancing reserve is an ancillary service rather than a commodity. Our analysis thus focuses exclusively on the timing of the BO's actions. This dynamic modelling contrasts with previous economic studies of operating reserve in the literature, which have largely been static and concerned with prices and quantities \citep{Just2008}.
Further we restrict the SO's choice of option parameters in order to meet the following  {\em sustainability conditions}:
\begin{enumerate}
\item[{\bf S1.}] {\em The BO has a positive expected profit from the offer and exercise of the option.\label{goal:2}}
\item[{\bf S2.}] {\em The option cannot lead to a certain financial loss for the SO. \label{goal:1}}
\end{enumerate}
The present study addresses call options, or equivalently incremental capacity (defined as an increase in generation or equivalently a decrease in load). Put options, i.e., a decrease in generation or an increase in load, lead to a fundamentally different set of optimisation problems and are left for future research.

\subsection{Objectives}\label{sec:objectives}

Our main objective, dubbed the \emph{lifetime problem}, is to study the use of a dedicated battery to repeatedly provide balancing services through the considered contract to the SO.  To this end we first study the \emph{single option problem} in which the timing of a single energy purchase and option sale is optimised. We take into account the progressive degradation of the battery and consider the interests of both the SO and the BO, formulating the two mathematical aims {\bf M1} and {\bf M2} as follows.

We examine battery charging policies by identifying the {\em highest EIM price}, denoted $\check x$, at which the BO will buy energy:

\begin{enumerate}
\item[{\bf M1.}] {\em For the single and lifetime problems, find the highest EIM price $\check x$ at which the BO may buy energy when acting optimally.}
\end{enumerate}

As discussed above, for mathematical tractability our valuations will be restricted to certain initial prices $X_0$.  More precisely we have:

\begin{enumerate}
\item[{\bf M2.}] {\em For the single and lifetime problems, find the expected value of the total discounted cash flows (value function) for the BO corresponding to each initial EIM price $x \geq \check x$.}
\end{enumerate}

Finally we aim to provide a straightforward numerical procedure to explicitly calculate $\check x$ and the  value function (for $x \geq \check x$) in the lifetime problem.

\subsection{Methodology}\label{sec:setup}

The SO's system balancing challenge is real-time and continuous and we take the EIM price to be a continuous time stochastic process $(X_t)_{t \geq 0}$. Adapting the setup from \cite{moriarty2014american},
the following sequence of actions is considered:
\begin{enumerate}
\item[A1] The BO first selects a time to purchase a unit of energy on the EIM.
\item[A2] With this physical cover in place, the BO may then sell the call option to the SO in exchange for a premium $p_c \ge 0$.
\item[A3] The SO exercises the call option when the EIM price $X$ first lies above a given level $x^*$ and immediately receives one unit of energy in return for a utilisation payment $K_c \ge 0$. 
\end{enumerate}
The BO has timing flexibility in executing steps A1 and A2. For this reason we apply real options analysis from the BO's point of view in order to value the above sequence of actions.
Mathematically the problem is one of choosing two {\em optimal stopping times} corresponding to the two actions A1 and A2, based on the evolution of the stochastic process $X$. (The reader is refered to \citet[Chapter 1]{Peskir2006} for a thorough presentation of optimal stopping problems.) We centre our solution techniques around ideas of \cite{Beibel2000}, who characterise optimal stopping times using Laplace transforms of first hitting times for the process $X$ (see for example \citet[Section 1.10]{Borodin1996}). Methods and results from the single option analysis are then combined with a fixed point argument to find the optimal timings and  lifetime value function when the cycle A1--A3 is iterated indefinitely. 

Our methodological results feed into a growing body of research on timing problems in trading. In a financial context, \cite{Zervos2013} optimise the performance of ``buy low, sell high'' strategies, using the same Laplace transforms
to provide a candidate value function, which is later verified as a solution to quasi-variational inequalities. An analogous strategy in an electricity market using hydroelectric storage is studied in \cite{Carmona2010} where the authors use numerical methods to solve a related optimal switching problem. Our results differ from the above papers in two aspects. Our analysis is purely probabilistic, leading to simpler arguments that do not refer to the theory of PDEs and quasi-variational inequalities. Secondly, our characterisation of the value function and the optimal policy is explicit up to a one-dimensional non-linear optimisation which, as we demonstrate in an empirical experiment, can be performed in milliseconds using standard scientific software. 
Related to our lifetime analysis, \citet{carmona2008optimal} apply probabilistic techniques to study the optimal multiple-stopping problem for a general linear regular diffusion process and reward function. However the latter work deals with a finite number of option exercises in contrast to our lifetime analysis which addresses an infinite sequence of options via a fixed point argument. Our work thus yields results with a significantly simpler and more convenient structure.

The remainder of the paper is organised as follows. The mathematical formulation and some preliminary results are given in Section \ref{sec:results}. Our main results for the single option and lifetime problems are derived in Section \ref{sec:mainres}. In Section \ref{sec:examples} we show that, for several specific price processes $X$ which  incorporate mean reversion, solutions for {\em all} initial values $X_0$ can be obtained. An empirical illustration using real EIM data from the German Amprion SO is given in Section \ref{sec:empirics} and qualitative implications are drawn, while Section \ref{sec:conclusion} concludes.

\section{Formulation and preliminary results} \label{sec:results}
In this section we characterise the real option value 
of the sequence of actions A1--A3, and also the associated lifetime value of the store, using the theory of regular one-dimensional diffusions. Denoting by $(W_t)_{t \ge 0}$ a standard Brownian motion,  
let $X=(X_t)_{t \geq 0}$ be a (weak) solution of the stochastic differential equation:
\begin{equation}\label{eq:diffusion}
dX_t = \mu(X_t) dt + \sigma (X_t) dW_t,
\end{equation}
with boundaries $a \in \er \cup \{-\infty\}$ and $b \in \er \cup \{ \infty \}$. The solution of this equation with the initial condition $X_0 = x$ defines a probability measure $\mathbb{P}^x$ and the related expectation operator $\ee^x$. We assume that the boundaries are natural, i.e. the process cannot reach them in finite time, and that $X$ is a regular diffusion process, meaning that the state space $I:=(a,b)$ cannot be decomposed into smaller sets from which $X$ cannot exit. The existence and uniqueness of such an $X$ is guaranteed if the functions $\mu$ and $\sigma$ are Borel measurable in $I$ with $\sigma^2>0$, and 
\begin{equation}
\label{LI}
\text{$\forall \; y \in I,\ \exists \; \varepsilon>0$ such that } \int_{y-\varepsilon}^{y+\varepsilon}\frac{1 + |\mu(\xi)|}{\sigma^2(\xi)}\,d\xi < +\infty,
\end{equation}
(see \citet[Theorem 5.5.15]{karatzas1991}; condition \eqref{LI} holds if, for example, $\mu$ is locally bounded and $\sigma$ is locally bounded away from zero).
Necessary and sufficient conditions for the  boundaries $a$ and $b$ to be natural are formulated in Theorem 5.5.29 of the latter book. In particular, it is sufficient that the scale function 
\[
p(x) := \int_c^x \exp \left( -2 \int_c^z \frac{\mu(u)}{\sigma^2(u)} du \right) dz, \quad x \in I,
\]
converges to $-\infty$ when $x$ approaches $a$ and to $+\infty$ when $x$ approaches $b$. (Here  $c \in I$ is arbitrary and the condition stated above does not depend on its choice.) These conditions are mild, in the sense that they are satisfied by all common diffusion models for commodity prices, including those in Section \ref{sec:examples}. 

Denote by $\tau_x$ the first time that the process $X$ reaches $x \in I$, so that 
\begin{equation} \label{eqn:deftaux}
\tau_x = \inf \{ t \ge 0: X_t = x \}.
\end{equation}
For $r>0$, define
\begin{equation}\label{eq:egf}
\psi_r(x) = \begin{cases}
          \ee^x \{ e^{-r \tau_c}\}, & x \le c,\\
          1/\ee^c \{ e^{-r \tau_x}\}, & x > c,
          \end{cases}
\qquad
\phi_r(x) = \begin{cases}
          1/\ee^c \{ e^{-r \tau_x}\}, & x \le c,\\
          \ee^x \{ e^{-r \tau_c}\}, & x > c,
          \end{cases}
\end{equation}
for any fixed $c \in I$ (different choices of $c$ merely result in a scaling of the above functions). It can be verified directly that function $\phi_r(x)$ is strictly decreasing in $x$ while $\psi_r(x)$ is strictly increasing, and for $x, y \in I$ we have
\begin{equation}\label{eqn:discount_stop}
\ee^x \{ e^{-r \tau_y} \} = 
\begin{cases}
\psi_r (x) / \psi_r(y), & x < y,\\
\phi_r (x) / \phi_r(y), & x \ge y.
\end{cases}
\end{equation}
It follows, for example, from \citet[Section II.5]{Borodin1996} that $\psi_r$ and $\phi_r$ are $r$-excessive. (A nonnegative function $f$ is said to be {\em $r$-excessive} if $f(x) \geq \ee^x\{e^{-r \tau}f(X_\tau)\}$ for all stopping times $\tau$ and all $x \in I$.) Moreover, since the boundaries $a,b$ are natural, we have $\psi_r(a+) = \phi_r(b-) = 0$ and $\psi_r(b-) = \phi_r(a+) = \infty$ \citep[Section II.1]{Borodin1996}.

\subsection{Optimal stopping problems and solution technique}
\label{sec:osmethod}
The class of {optimal stopping problems} which we use in this paper is
\begin{equation}\label{eqn:full_stopping_problem}
v(x) = \sup_{\tau} \ee^x \{ e^{-r \tau} \vartheta(X_\tau) \ind{\tau < \infty} \},
\end{equation}
where the supremum is taken over the set of all (possibly infinite) stopping times. Here $\vartheta$ is the {\em payoff} function and $v$ is the {\em value} function. If a stopping time $\tau^*$ exists which achieves the equality \eqref{eqn:full_stopping_problem} we call this an {\em optimal} stopping time. Also, if $v$ and $\vartheta$ are continuous then 
the set
\begin{equation}\label{eqn:ost}
\Gamma := \{x \in I: v(x)=\vartheta(x)\} \qquad 
\end{equation}
 is a closed subset of $I$. Under general conditions \citep[Chapter 1]{Peskir2006}, which are satisfied by all stopping problems studied in this paper, $\tau^* = \inf\{ t \ge 0:\ X_t \in \Gamma\}$ is the smallest optimal stopping time and the set $\Gamma$ is then called the \emph{stopping set}.

Note that if $\sup_{x} \vartheta(x) \le 0$ then no choice of the stopping time $\tau$ gives a value function greater than $0$. The optimal stopping time in this case is given by $\tau = \infty$. In what follows we therefore assume 
\begin{equation}\label{eqn:sup_h_ge_0}
\sup_{x \in (a,b)} \vartheta(x) > 0.
\end{equation}

The following three lemmas provide an exhaustive list of possible types of solution to the stopping problem \eqref{eqn:full_stopping_problem}. Lemma \ref{lem:val_infinite} and \ref{lem:no_optimal} correspond to cases when there is no optimal stopping time but the optimal value can be reached in the limit by a sequence of stopping times.

\begin{Lemma}\label{lem:optim_taux}
Assume that there exists $\hat x \in I$ which maximises $\vartheta(x) / \phi_r (x)$ over $I$. 
Then the value function $v(x)$ is finite for all $x$, and for $x \ge \hat x$:
\begin{enumerate}
\item the stopping time $\tau_{\hat x}$ is optimal, 
\item
\(\displaystyle
v(x) = \frac{\vartheta(\hat x)}{\phi_r(\hat x)} \phi_r(x),
\)
\item any stopping time $\tau$ with $\mathbb{P}^x\big\{\vartheta(X_\tau) / \phi_r (X_\tau) < \vartheta(\hx) / \phi_r (\hx)\big\}>0$ is strictly suboptimal for the problem $v(x)$. 
\end{enumerate}
\end{Lemma}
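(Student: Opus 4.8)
The plan is to identify the maximal ratio
\[
M := \frac{\vartheta(\hat x)}{\phi_r(\hat x)} = \max_{y \in I} \frac{\vartheta(y)}{\phi_r(y)},
\]
so that by hypothesis $\vartheta(y) \le M\,\phi_r(y)$ for every $y \in I$, with equality at $y = \hat x$. Observe first that $M>0$: since $\phi_r>0$ on $I$ and $\sup_{y}\vartheta(y)>0$ by \eqref{eqn:sup_h_ge_0}, there is a point where $\vartheta/\phi_r>0$. The whole argument will then consist of sandwiching the value function between the upper bound $M\phi_r$, obtained from the $r$-excessivity of $\phi_r$, and the value attained by the hitting time $\tau_{\hat x}$, computed explicitly from \eqref{eqn:discount_stop}.

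First I would establish the upper bound and finiteness (claim 1). For an arbitrary stopping time $\tau$, on $\{\tau<\infty\}$ the bound $\vartheta(X_\tau)\le M\phi_r(X_\tau)$ gives $e^{-r\tau}\vartheta(X_\tau)\ind{\tau<\infty}\le M\,e^{-r\tau}\phi_r(X_\tau)\ind{\tau<\infty}$ pointwise. Taking $\ee^x$ and invoking the $r$-excessivity of $\phi_r$ (using the standard convention $e^{-r\tau}\phi_r(X_\tau)=0$ on $\{\tau=\infty\}$, so that the indicator may be dropped) yields
\[
\ee^x\{ e^{-r\tau}\vartheta(X_\tau)\ind{\tau<\infty}\}\le M\,\ee^x\{ e^{-r\tau}\phi_r(X_\tau)\}\le M\,\phi_r(x).
\]
Taking the supremum over $\tau$ shows $v(x)\le M\phi_r(x)<\infty$ for every $x\in I$, which proves finiteness.

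Next I would show the bound is attained by $\tau_{\hat x}$ when $x\ge\hat x$ (claims 2 and 3). By path continuity, on $\{\tau_{\hat x}<\infty\}$ we have $X_{\tau_{\hat x}}=\hat x$, so the payoff collapses to $\vartheta(\hat x)\,\ee^x\{e^{-r\tau_{\hat x}}\ind{\tau_{\hat x}<\infty}\}=\vartheta(\hat x)\,\ee^x\{e^{-r\tau_{\hat x}}\}$. For $x\ge\hat x$, formula \eqref{eqn:discount_stop} gives $\ee^x\{e^{-r\tau_{\hat x}}\}=\phi_r(x)/\phi_r(\hat x)$, whence the value equals $\vartheta(\hat x)\phi_r(x)/\phi_r(\hat x)=M\phi_r(x)$. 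This matches the upper bound, so $v(x)=M\phi_r(x)$ and $\tau_{\hat x}$ is optimal.

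Finally, for strict suboptimality (claim 4) I would revisit the displayed inequality chain, noting that the integrand gap $M\phi_r-\vartheta$ is nonnegative on $I$ and strictly positive exactly on $\{\vartheta/\phi_r<M\}$. If $\mathbb{P}^x$ assigns positive mass to this set intersected with $\{\tau<\infty\}$, then since $e^{-r\tau}>0$ there, the first inequality above is strict, forcing $\ee^x\{e^{-r\tau}\vartheta(X_\tau)\ind{\tau<\infty}\}<M\phi_r(x)=v(x)$. I expect the main obstacle to lie precisely in this last step: careful bookkeeping on the event $\{\tau=\infty\}$, where the discounted payoff vanishes and so contributes nothing, and the verification that equality in the $r$-excessive inequality is genuinely achieved by the downward hitting time $\tau_{\hat x}$ (the content of \eqref{eqn:discount_stop}), which is what makes the upper bound tight rather than merely valid.
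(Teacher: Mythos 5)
Your proposal is correct and follows essentially the same route as the paper: bound the payoff by $M\phi_r$, use the $r$-excessivity of $\phi_r$ to get $v\le M\phi_r$, and show the bound is attained by $\tau_{\hat x}$ via \eqref{eqn:discount_stop}, with strict suboptimality coming from strictness in the first inequality. The one step you compress into a ``standard convention'' is the only place the paper does real work: since $\phi_r(a+)=\infty$, the bound $\ee^x\{e^{-r\tau}\phi_r(X_\tau)\ind{\tau<\infty}\}\le\phi_r(x)$ for \emph{possibly infinite} $\tau$ is not a matter of convention but is proved there by truncating at levels $b_n\uparrow b$ and applying Fatou's lemma together with $\phi_r(b-)=0$ (a temporal truncation $\tau\wedge t$ plus monotone convergence would also do); you should supply such an argument rather than fold it into the definition.
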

\begin{Aproof}
Since $\phi_r$ is $r$-excessive, for any finite stopping time $\tau$
\[
\ee^x \{ e^{-r \tau} \phi_r(X_{\tau}) \} \le \phi_r(x).
\]
Let now $\tau$ be a stopping time taking possibly infinite values. Let $b_n$ be an increasing sequence converging to $b$ with $b_1 > x$, the initial point of the process $X$. Then $\tau_{b_n}$ is an increasing sequence of stopping times converging to infinity and 
\begin{align*}
\phi_r(x) &\ge \liminf_{n \to \infty} \ee^x \{ e^{-r (\tau \wedge \tau_{b_n}) } \phi_r(X_{\tau \wedge \tau_{b_n}}) \}\\
&\ge \ee^x \{ \liminf_{n \to \infty} e^{-r (\tau \wedge \tau_{b_n}) } \phi_r(X_{\tau \wedge \tau_{b_n}}) \} = \ee^x \{ e^{-r \tau} \phi_r(X_\tau) \ind{\tau<\infty} \},
\end{align*}
where $\phi_r(b-) = 0$ was used in the last equality.

For any stopping time $\tau$
\begin{equation}\label{eqn:upb}
\begin{aligned}
\ee^x \{ e^{-r \tau} \vartheta(X_\tau) \ind{\tau<\infty}\} 
&= \ee^x \left\{ e^{-r \tau} \phi_r(X_\tau) \frac{\vartheta(X_\tau)}{\phi_r(X_\tau)} \ind{\tau<\infty}\right\} \\
&\le \frac{\vartheta(\hat x)}{\phi_r(\hat x)} \ee^x \big\{ e^{-r \tau} \phi_r(X_\tau) \ind{\tau<\infty} \big\} \le \frac{\vartheta(\hat x)}{\phi_r(\hat x)} \phi_r(x), 
\end{aligned}
\end{equation}
where the final inequality follows from the first part of the proof and \eqref{eqn:sup_h_ge_0} (so $\frac{\vartheta(\hat x)}{\phi_r(\hat x)} > 0$). Hence, $v(x)$ is finite for all $x \in I$. To prove claim 1, note from \eqref{eqn:discount_stop} that for $x \geq \hat x$ the upper bound is attained by $\tau_{\hat x}$, which is therefore an optimal stopping time in the problem $v(x)$. The assumption on $\tau$ in claim 3 leads to strict inequality in \eqref{eqn:upb}, making $\tau$ strictly suboptimal in the problem $v(x)$.
\end{Aproof}

It is convenient to introduce the notation
\begin{equation}\label{eq:alim}
L:=\limsup_{x \to a} \frac{\vartheta(x)^+}{\phi_r(x)}.
\end{equation}
\begin{Lemma}\label{lem:val_infinite}
If the quantity $L$ in \eqref{eq:alim} is equal to positive infinity then the value function is infinite and there is no optimal stopping time.
\end{Lemma}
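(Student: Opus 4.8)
The plan is to prove the two assertions separately: I would first establish that $v\equiv+\infty$ by exhibiting an explicit sequence of hitting times whose rewards diverge, and then deduce non-attainment from the observation that this divergence is driven entirely by the behaviour of $\vartheta/\phi_r$ at the inaccessible boundary $a$.

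For the first part I would fix an arbitrary starting point $x\in I$ and use the hypothesis $L=+\infty$ to choose a sequence $x_n\to a$ with $\vartheta(x_n)^+/\phi_r(x_n)\to+\infty$; along this sequence $\vartheta(x_n)^+>0$ eventually, so $\vartheta(x_n)=\vartheta(x_n)^+>0$. Since $x_n\to a<x$ we have $x_n<x$ for all large $n$, and I would test the (suboptimal) rule $\tau_{x_n}$. On $\{\tau_{x_n}<\infty\}$ the process sits at $x_n$, so the reward equals $\vartheta(x_n)\,\ee^x\{e^{-r\tau_{x_n}}\ind{\tau_{x_n}<\infty}\}$; because $e^{-r\cdot\infty}=0$ this is $\vartheta(x_n)\,\ee^x\{e^{-r\tau_{x_n}}\}$, and \eqref{eqn:discount_stop} (with $x\ge x_n$) evaluates $\ee^x\{e^{-r\tau_{x_n}}\}$ to $\phi_r(x)/\phi_r(x_n)$. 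Hence $v(x)\ge \phi_r(x)\,\vartheta(x_n)^+/\phi_r(x_n)\to+\infty$, and since $x\in I$ was arbitrary, $v\equiv+\infty$ on $I$.

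It remains to show that no stopping time attains this value. Here I would argue that, as $v\equiv+\infty$ strictly exceeds the finite-valued payoff $\vartheta$ at every point, the set $\Gamma$ in \eqref{eqn:ost} is empty; under the general conditions recalled after \eqref{eqn:ost}, the only candidate for an optimal rule is $\inf\{t\ge 0:X_t\in\Gamma\}=+\infty$, whose reward is $0\neq v(x)$, so no optimal stopping time exists. The underlying intuition is that the supremum is produced solely in the limit $x_n\to a$, and $a$ is natural, hence unreachable in finite time, so there is no state at which stopping realises the infinite value.

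The main obstacle is precisely this rigorous justification of non-attainment: one must exclude the possibility that some path-dependent, non-threshold stopping time reproduces the limiting value at $a$ without the process ever reaching $a$. The cleanest route is to invoke the stopping-set characterisation just quoted, which reduces optimality to first entry into $\Gamma=\emptyset$. As a concrete fallback for the ``buy-low'' structure at hand, one may instead restrict attention to the threshold rules $\tau_y$, for which the reward $\phi_r(x)\,\vartheta(y)/\phi_r(y)$ with $y\le x$ is monotone in the relevant regime and its supremum is approached only as $y\to a$; since $a\notin I$, this supremum is not attained at any admissible $y$, confirming that the infinite value is a limit rather than a maximum.
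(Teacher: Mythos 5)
Your argument for the first assertion is essentially the paper's own proof: fix $x$, test the threshold rules $\tau_{x_n}$ with $x_n\to a$ chosen so that $\vartheta(x_n)^+/\phi_r(x_n)\to\infty$, evaluate the reward as $\vartheta(x_n)\,\phi_r(x)/\phi_r(x_n)$ via \eqref{eqn:discount_stop}, and let $n\to\infty$. That part is correct and, if anything, slightly cleaner than the paper's version (you handle the positive part explicitly, and you do not need the paper's aside about recurrence, since every $x\in I$ eventually dominates $x_n$).

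The second part is where I would push back. Your route to non-attainment --- $\Gamma=\emptyset$ in \eqref{eqn:ost}, hence the ``only candidate'' $\inf\{t:X_t\in\Gamma\}=\infty$ is not optimal --- leans on the stopping-set characterisation stated after \eqref{eqn:ost}, but that characterisation holds only ``under general conditions'' which include integrability/finiteness hypotheses; these are exactly what fail when $v\equiv+\infty$, so the implication ``first entry into $\Gamma$ is the only candidate optimal rule'' is not available here. Moreover, $\Gamma=\emptyset$ does not by itself exclude a (possibly path-dependent) stopping time $\tau$ with $\ee^x\{e^{-r\tau}\vartheta(X_\tau)\ind{\tau<\infty}\}=+\infty$, which would attain the supremum trivially; ruling that out would need a separate argument, and your fallback restricted to threshold rules $\tau_y$ does not cover general stopping times. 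To be fair, the paper's own proof stops after establishing $v\equiv\infty$ and offers no argument for the non-existence claim either, so your proposal matches the paper on everything the paper actually proves; just be aware that the justification you supply for the extra clause is not sound as written.
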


\begin{Aproof}
Fix any $x \in I$. Then for any $\hat x < x$ we have 
\[
\ee^{x} \{ e^{-r \tau_{\hat x}} \vartheta(X_{\tau_{\hat x}}) \} = \vartheta(\hat x) \frac{\phi_r(x)}{\phi_r(\hat x)},
\]
which converges to infinity for $\hat x$ tending to $a$ over an appropriate subsequence. Since the process is recurrent, the point $x$ can be reached from any other point in the state space with positive probability in a finite time. This proves that the value function is infinite for all $x \in I$.
\end{Aproof}

\begin{Lemma}\label{lem:no_optimal}
With the notation of \eqref{eq:alim}, assume $L < \infty$ and $L > \vartheta(x) / \phi_r(x)$ for all $x \in I$. Then there is no optimal stopping time and the value function equals $v(x) = L \phi_r(x)$.
\end{Lemma}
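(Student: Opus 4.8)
The plan is to establish the two inequalities $v(x) \le L\phi_r(x)$ and $v(x) \ge L\phi_r(x)$ separately, and then to rule out optimality by a strictness argument. Before starting I would record that $L \in (0,\infty)$: finiteness is part of the hypothesis, and if $L=0$ then the assumption $L > \vartheta(x)/\phi_r(x)$ would force $\vartheta(x) < 0$ for every $x$ (since $\phi_r > 0$ on $I$), contradicting \eqref{eqn:sup_h_ge_0}. As $\phi_r(x) > 0$ throughout $I$, the target value $L\phi_r(x)$ is then strictly positive. This lemma is exactly the complementary case to Lemma \ref{lem:optim_taux}, in which the ratio $\vartheta/\phi_r$ has no maximiser but its supremum $L$ is approached only in the limit $x \to a$.

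For the upper bound I would reuse the excessivity estimate already derived in the proof of Lemma \ref{lem:optim_taux}, namely $\ee^x\{ e^{-r\tau}\phi_r(X_\tau)\ind{\tau<\infty}\} \le \phi_r(x)$ for every stopping time $\tau$. The pointwise inequality $\vartheta(x) < L\phi_r(x)$, immediate from the hypothesis and $\phi_r>0$, then gives for any $\tau$
\[
\ee^x\{ e^{-r\tau}\vartheta(X_\tau)\ind{\tau<\infty}\} \le L\,\ee^x\{ e^{-r\tau}\phi_r(X_\tau)\ind{\tau<\infty}\} \le L\phi_r(x),
\]
so that $v(x) \le L\phi_r(x)$.

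For the lower bound I would imitate Lemma \ref{lem:val_infinite}, approximating with first-hitting times of points near the boundary $a$. By the definition \eqref{eq:alim} of $L$ as a $\limsup$ of $\vartheta(\cdot)^+/\phi_r(\cdot)$, choose a sequence $\hat x_n \to a$ along which $\vartheta(\hat x_n)^+/\phi_r(\hat x_n) \to L$; since $L>0$ one may pass to a subsequence on which $\vartheta(\hat x_n) > 0$, so the positive part can be dropped. For $n$ large enough that $\hat x_n < x$, identity \eqref{eqn:discount_stop} yields
\[
\ee^x\{ e^{-r\tau_{\hat x_n}}\vartheta(X_{\tau_{\hat x_n}})\} = \frac{\vartheta(\hat x_n)}{\phi_r(\hat x_n)}\,\phi_r(x) \longrightarrow L\phi_r(x),
\]
whence $v(x) \ge L\phi_r(x)$. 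Combining the two bounds gives $v(x)=L\phi_r(x)$.

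Finally, to show that no optimal stopping time exists I would prove that every $\tau$ is strictly suboptimal. If $\mathbb{P}^x\{\tau<\infty\}=0$ the reward is $0 < L\phi_r(x) = v(x)$. Otherwise, on $\{\tau<\infty\}$ the strict hypothesis gives $\vartheta(X_\tau) < L\phi_r(X_\tau)$ on a set of positive probability, so the first inequality in the upper-bound display above becomes strict and $\ee^x\{ e^{-r\tau}\vartheta(X_\tau)\ind{\tau<\infty}\} < L\phi_r(x) = v(x)$. In either case $\tau$ fails to attain $v(x)$. The main obstacle I anticipate is purely the bookkeeping around the positive part in $L$: one must verify that the approximating sequence can be chosen with $\vartheta(\hat x_n)>0$ so that the positive part is harmless in the lower bound, and that $L>0$ is genuinely forced by \eqref{eqn:sup_h_ge_0}; the analytic content is otherwise a direct combination of the excessivity bound and the hitting-time identity.
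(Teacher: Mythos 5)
Your proposal is correct and follows essentially the same route as the paper's proof: the upper bound via the excessivity estimate from Lemma \ref{lem:optim_taux} combined with the strict pointwise inequality $\vartheta < L\phi_r$, the lower bound via hitting times $\tau_{x_n}$ of points approaching $a$ along which $\vartheta(x_n)/\phi_r(x_n)\to L$, and non-existence of an optimiser from the strictness. If anything you are slightly more careful than the paper in two places --- explicitly disposing of the positive part in the definition of $L$ and treating the case $\mathbb{P}^x\{\tau<\infty\}=0$ separately --- but the argument is the same.
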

\begin{Aproof}
Recall that due to the supremum of $\frac{\vartheta}{\phi_r}$ being strictly positive we have $L > 0$. From the proof of Lemma \ref{lem:optim_taux}, for an arbitrary stopping time $\tau$ we have
\[
\ee^x \{ e^{-r \tau} \vartheta(X_\tau) \ind{\tau < \infty} \} = \ee^x \{ e^{-r \tau} \phi_r(X_\tau) \frac{\vartheta(X_\tau)}{\phi_r(X_\tau)} \ind{\tau < \infty} \}  
< L\, \ee^x \{ e^{-r \tau} \phi_r(X_\tau) \ind{\tau < \infty}\}  
\le L \phi_r(x).
\]
However, one can construct a sequence of stopping times that achieves this value in the limit. Take $x_n$ such that $\lim_{n \to \infty} \vartheta(x_n) / \phi_r(x_n) = L$ and define $\tau_n = \tau_{x_n}$. Then 
\[
\lim_{n \to \infty} \ee^x \{ e^{-r \tau_n} \vartheta(X_{\tau_n}) \} = \lim_{n \to \infty} \vartheta(x_n) \frac{\phi_r(x)}{\phi_r(x_n)} = \phi_r(x) L,
\]
so $v(x) = \phi_r(x) L$. This together with the strict inequality above proves that an optimal stopping time does not exist.
\end{Aproof}

The results developed in this section also have a `mirror' counterpart involving 
\begin{equation}\label{eq:R}
R:=\limsup_{x \to b} \frac{\vartheta(x)^+}{\psi_r(x)}
\end{equation}
rather than $L$. In particular, the value function is infinite if $R = \infty$, and
\begin{Corollary}\label{cor:taux}
If $\hat x \in I$ maximises $\vartheta(x) / \psi_r (x)$ then for any $x \le \hat x$ an optimal stopping time in the problem $v(x)$ is given by $\tau_{\hat x}$.
\end{Corollary}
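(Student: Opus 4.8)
The plan is to establish this as the exact mirror image of Lemma \ref{lem:optim_taux}, interchanging the roles of $\phi_r$ and $\psi_r$ and reflecting the inequality on $x$: where the earlier lemma exploited that $\phi_r$ is decreasing with $\phi_r(b-)=0$ and handled $x \ge \hat x$, here I would exploit that $\psi_r$ is increasing with $\psi_r(a+)=0$ and handle $x \le \hat x$.

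First I would record the excessive-function bound for $\psi_r$, extended to possibly infinite stopping times. Since $\psi_r$ is $r$-excessive, for any finite stopping time $\tau$ one has $\ee^x\{e^{-r\tau}\psi_r(X_\tau)\} \le \psi_r(x)$. To pass to infinite $\tau$, I would take a decreasing sequence $a_n \to a$ with $a_1 < x$, so that $\tau_{a_n}$ increases to infinity (the boundary $a$ being natural and hence unattainable), and then run the same Fatou argument as in Lemma \ref{lem:optim_taux}:
\[
\psi_r(x) \ge \liminf_{n\to\infty} \ee^x\{ e^{-r(\tau\wedge\tau_{a_n})}\psi_r(X_{\tau\wedge\tau_{a_n}})\} \ge \ee^x\{ e^{-r\tau}\psi_r(X_\tau)\ind{\tau<\infty}\},
\]
where $\psi_r(a+)=0$ disposes of the event $\{\tau=\infty\}$ in the final equality.

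Next I would derive the upper bound on the value function. For any stopping time $\tau$, factoring $\vartheta = \psi_r \cdot (\vartheta/\psi_r)$ and using that $\hat x$ maximises $\vartheta/\psi_r$ gives
\[
\ee^x\{ e^{-r\tau}\vartheta(X_\tau)\ind{\tau<\infty}\} \le \frac{\vartheta(\hat x)}{\psi_r(\hat x)}\, \ee^x\{ e^{-r\tau}\psi_r(X_\tau)\ind{\tau<\infty}\} \le \frac{\vartheta(\hat x)}{\psi_r(\hat x)}\,\psi_r(x),
\]
with assumption \eqref{eqn:sup_h_ge_0} ensuring $\vartheta(\hat x)/\psi_r(\hat x) > 0$ so that the sign of the ratio bound is correct. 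Finally, for $x \le \hat x$ the formula \eqref{eqn:discount_stop} yields $\ee^x\{e^{-r\tau_{\hat x}}\} = \psi_r(x)/\psi_r(\hat x)$, so that $\tau_{\hat x}$ attains this upper bound and is therefore optimal (and, as a byproduct, $v(x) = \frac{\vartheta(\hat x)}{\psi_r(\hat x)}\psi_r(x)$ for $x \le \hat x$).

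There is essentially no genuine obstacle here beyond bookkeeping: the only points requiring care are the directions, namely using the left approximating sequence $a_n \downarrow a$ (rather than $b_n \uparrow b$) together with $\psi_r(a+)=0$, and applying \eqref{eqn:discount_stop} in the regime $x < \hat x$ where the $\psi_r$-quotient, not the $\phi_r$-quotient, supplies the discount factor. The restriction $x \le \hat x$ is exactly what makes the attainment step go through, just as $x \ge \hat x$ did in Lemma \ref{lem:optim_taux}.
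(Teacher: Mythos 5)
Your proposal is correct and is precisely the argument the paper intends: the paper gives no separate proof of Corollary \ref{cor:taux}, merely declaring it the ``mirror'' counterpart of Lemma \ref{lem:optim_taux}, and your write-up carries out exactly that mirror — the $r$-excessivity of $\psi_r$, the Fatou argument with $a_n \downarrow a$ and $\psi_r(a+)=0$ in place of $b_n \uparrow b$ and $\phi_r(b-)=0$, and attainment of the bound by $\tau_{\hat x}$ via \eqref{eqn:discount_stop} in the regime $x \le \hat x$. No gaps; the approach is the same as the paper's.
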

This also motivates the assumptions of the following lemma which collects results from \citet[Section 5.2]{Dayanik2003}. 
\begin{Lemma} \label{lem:continuity}
Assume that $L, R < \infty$ and $\vartheta$ is locally bounded. Then the value function $v$ is finite and continuous on $(a,b)$.
\end{Lemma}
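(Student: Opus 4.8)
The plan is to prove finiteness and continuity by separate arguments: finiteness by exhibiting a finite $r$-excessive function that dominates $\vartheta$, and continuity by passing to the Dayanik--Karatzas coordinates in which $v$ is the product of $\phi_r$ with a concave function.

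First I would establish finiteness by constructing a finite $r$-excessive majorant of the payoff. Any nonnegative linear combination $g := \alpha \phi_r + \beta \psi_r$ is again $r$-excessive and is finite on $(a,b)$, since $\phi_r$ and $\psi_r$ blow up only at the boundaries, so it suffices to choose $\alpha, \beta \ge 0$ with $\vartheta(x) \le g(x)$ for every $x \in I$. This is exactly where $L, R < \infty$ enter. Choosing $\alpha > L$ controls $\vartheta^+$ near $a$, where $\phi_r \to \infty$ and $\psi_r \to 0$, giving $\vartheta \le \alpha \phi_r$ on some left-neighbourhood $(a, a']$; choosing $\beta > R$ controls $\vartheta^+$ near $b$, where $\psi_r \to \infty$ and $\phi_r \to 0$, giving $\vartheta \le \beta \psi_r$ on some right-neighbourhood $[b', b)$; and on the compact middle interval $[a', b']$ the local boundedness of $\vartheta$ together with the continuity and strict positivity of $\phi_r$ lets $\alpha$ be enlarged so that $\vartheta \le \alpha \phi_r$ there too. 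With such $\alpha, \beta$ we then have, for every stopping time $\tau$,
\[
\ee^x \{ e^{-r\tau} \vartheta(X_\tau) \ind{\tau<\infty} \} \le \ee^x \{ e^{-r\tau} g(X_\tau) \ind{\tau<\infty} \} \le g(x),
\]
the last step being the $r$-excessivity of $g$. Taking the supremum over $\tau$ yields $0 \le v(x) \le g(x) < \infty$.

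For continuity I would pass to the Dayanik--Karatzas coordinates. The ratio $F := \psi_r / \phi_r$ is a strictly increasing continuous bijection of $(a,b)$ onto $(0,\infty)$, because $\psi_r(a+) = \phi_r(b-) = 0$ and $\psi_r(b-) = \phi_r(a+) = \infty$ force $F(a+) = 0$ and $F(b-) = \infty$. Writing $x = F^{-1}(y)$ and
\[
W(y) := \frac{\vartheta(F^{-1}(y))}{\phi_r(F^{-1}(y))}, \qquad y \in (0,\infty),
\]
the stopping problem \eqref{eqn:full_stopping_problem} transforms into that of finding the smallest nonnegative concave majorant $\hat{W}$ of $W$, with $v(x) = \phi_r(x)\, \hat{W}(F(x))$; this representation is the substance of the results of \citet[Section 5.2]{Dayanik2003} invoked by the lemma. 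Finiteness, just proved, shows $\hat{W}(y) < \infty$ for all $y$ (equivalently, the bounds $\limsup_{y\to 0}W(y)^+ = L$ and $\limsup_{y\to\infty}W(y)^+/y = R$, with $W$ locally bounded since $\vartheta$ is, dominate $W$ by an affine hence concave function). A finite concave function on an open interval is automatically continuous, so $\hat{W}$ is continuous on $(0,\infty)$; since $\phi_r$ and $F$ are continuous and $F$ maps into the open set $(0,\infty)$, the identity $v = \phi_r \cdot (\hat{W} \circ F)$ exhibits $v$ as continuous on $(a,b)$.

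The main obstacle is the representation $v(x) = \phi_r(x)\,\hat{W}(F(x))$ itself: everything downstream is elementary once it is in hand, but establishing it from first principles requires the full correspondence between $r$-excessive functions of $X$ and $F$-concave functions, together with the identification of $v$ as the least nonnegative $r$-excessive majorant of $\vartheta$ via the strong Markov property. Rather than reproving this, I would cite it from \citet{Dayanik2003} as the lemma does, and devote the written proof to the domination argument for finiteness and the short continuity deduction above.
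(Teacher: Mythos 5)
Your proof is correct and follows essentially the same route as the paper, which offers no argument of its own for this lemma and simply collects the relevant results from \citet[Section 5.2]{Dayanik2003}: the finiteness of the least nonnegative concave majorant of $\vartheta/\phi_r$ in the $F=\psi_r/\phi_r$ coordinate under $L,R<\infty$, and the representation $v=\phi_r\cdot(\hat W\circ F)$, from which continuity is immediate since a finite concave function on an open interval is continuous. Your self-contained finiteness argument via the $r$-excessive majorant $\alpha\phi_r+\beta\psi_r$ is a correct (and equivalent) probabilistic rendering of the same fact, so there is nothing to repair.
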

In the reminder of the paper, all the stopping problems considered will have a finite right-hand limit $R < \infty$. Therefore, whenever $L<\infty$, their value functions will be continuous.

\subsection{Single option problem formulation}
\label{sec:rewards}
Let $(X_t)_{t \geq 0}$ denote the EIM price. We will develop a mathematical representation of actions A1--A3 (see Section \ref{sec:setup}) for the single option contract. Starting from A3, the time of exercise by the system operator is the first time that the EIM price exceeds a predetermined level $x^*$:
\[
\hat \tau_e = \inf \{ t \ge 0: X_t \ge x^* \}.
\]
Given the present level $x$ of the EIM price, the expected net present value of the utilisation payment exchanged at time $\hat \tau_e$ can be expressed as follows thanks to \eqref{eqn:discount_stop}:
\begin{equation}
h_c (x) = E^x \{e^{-r\hat \tau_e} K_c \}=
\begin{cases}
K_c, & x \ge x^*,\\[5pt]
K_c \frac{\psi(x)}{\psi(x^*)}, & x < x^*.
\end{cases}\label{eq:one}
\end{equation}
Therefore, the optimal timing of action A2 corresponds to solving the following optimal stopping problem:
\[
\sup_{\tau} \ee^x \{ e^{-r\tau} \big(p_c + h_c(X_\tau)\big) \ind{\tau < \infty}\}.
\]
Since the utilisation payment $K_c$ obtained when the EIM price exceeds $x^*$ is positive and constant, as is the premium $p_c$, it is best to obtain these cashflows as soon as possible. The solution of the above stopping problem is therefore trivial: the contract should be sold immediately after completing action A1, i.e. immediately after providing physical cover for the option. Optimally timing the simultaneous actions A1 and A2, the \textbf{purchase of energy and sale of the option contract}, is therefore the core optimisation task. It corresponds to solving the following optimal stopping problem, whose payoff is non smooth:
\begin{equation}\label{eqn:single_option_stopping}
V_c(x) =\sup_{\tau} E^x \{ e^{-r \tau} \big(-X_\tau + p_c + h_c(X_\tau) \big) \ind{\tau < \infty}\} = \sup_{\tau} E^x \{ e^{-r \tau} h(X_\tau) \ind{\tau < \infty}\},
\end{equation}
where
\begin{equation}\label{eq:h1}
h(x) = - x + p_c + h_c(x).
\end{equation}
The function $V_c(x)$ is the real option value of the single option contract under our model.

\subsection{Lifetime problem formulation and notation}\label{sec:roundtrip}

In addition to having a design life of multiple decades, thermal power stations have the primary purpose of generating energy rather than providing ancillary services. In contrast electricity storage technologies such as batteries have a design life of years and may be dedicated to providing ancillary services. In this paper we take into account the potentially limited lifespan of electricity storage by modelling a multiplicative degradation of their storage capacity: each charge-discharge cycle reduces the capacity by a factor $A \in (0,1)$.

We now turn to considering the lifetime real option value of the store when used to sell an infinite sequence of single option contracts back-to-back. To this end, we suppose that a nonnegative {\em continuation value} $\zeta(x, \alpha)$ is also received at the same time as action A3. It is a function of the capacity of the store $\alpha \in (0,1)$ and the EIM price $x$, and represents the future proceeds from using the store to sell options back-to-back (either finitely or infinitely many times). 
 Since options are offered back-to-back, this continuation value enters the lifetime analysis as an additional payoff, which is received by the BO at the time of option exercise by the SO. 

The expected net present value of action A3 is now
\begin{equation}
h^\zeta (x, \alpha) := E^x \{e^{-r\hat \tau_e} \big(\alpha K_c + \zeta (X_{\hat \tau_e}, A \alpha)\big)\}
= \begin{cases}
         \big(\alpha  K_c + \zeta(x^*, A\alpha)\big)\frac{\psi(x)}{\psi(x^*)}, & x < x^*,\\
         \alpha  K_c + \zeta (x, A\alpha), & x \ge x^*,
         \end{cases}
\end{equation}
where $A \in (0,1)$ is the multiplicative decrease of storage capacity per cycle. Here the optimal timing of action A2 may be non trivial due to the continuation value $\zeta(x, \alpha)$. We will show however that for the functions $\zeta$ of interest in this paper, it is optimal to sell the option immediately after action A1, identically as in the single option case. The timing of action A1 requires the solution of the optimal stopping problem
\begin{equation}
\ct \zeta(x, \alpha) := \sup_{\tau} E^x \big\{e^{-r\tau} \big( - \alpha X_\tau + \alpha p_c + h^\zeta(X_\tau, \alpha)\big) \ind{\tau < \infty}\big\}.
\end{equation}
The \emph{optimal stopping operator} $\ct$ makes the dependence on $\zeta$ explicit: it maps $\zeta$ onto the real option value of a selling a single option followed by continuation according to $\zeta$. We define the \textbf{lifetime value function} $\hat V$ as the limit
\begin{equation}\label{eqn:hat_V}
\hat V(x) = \lim_{n \to \infty} \ct^n \textbf{0} (x, 1),
\end{equation}
(if the limit exists), where $\ct^n$ denotes the $n$-fold superposition of the operator $\ct$. Thus $\ct^n \textbf{0}$ is the real option value under our model of selling at most $n$ single options back-to-back. (Note that {\em a priori} it may not be optimal to sell all $n$ options in this case, since it is possible to offer fewer options and refrain from trading afterwards by choosing $\tau = \infty$.)

Calculation of the lifetime value function requires the analysis of a two-argument function. We will show now that this computation may be reduced to a function of the single argument $x$. Define $\zeta_0(x, \alpha) = 0$ and $\zeta_{n+1}(x, \alpha) = \ct \zeta_{n}(x, \alpha)$. We interpret $\zeta_n(x, \alpha)$ as the maximum expected wealth accumulated over at most $n$ cycles of the actions A1--A3 when the initial capacity of the store is $\alpha$.
\begin{Lemma}
We have $\zeta_n(x, \alpha) = \alpha \hat \zeta_n(x)$, where $\hat \zeta_n(x) = \zeta_n(x, 1)$. Moreover, $\hat \zeta_n(x) = \hat \ct^n \mathbf{0} (x)$, where
\begin{equation}\label{eqn:hatct}
\hat \ct \hat \zeta (x) = \sup_{\tau} \ee^x \big\{ e^{-r\tau} \big( - X_\tau +  p_c + \hat h^{\hat \zeta}(X_\tau) \big) \ind{\tau < \infty} \big\},
\end{equation}
and
\begin{equation}\label{eqn:hat_hc}
\hat h^{\hat \zeta}(x) = \begin{cases}
         \big(K_c + A \hat \zeta(x^*)\big)\frac{\psi(x)}{\psi(x^*)}, & x < x^*,\\
         K_c + A\hat \zeta (x), & x \ge x^*.
         \end{cases}
\end{equation}

\end{Lemma}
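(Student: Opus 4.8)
The plan is to prove both assertions simultaneously by induction on $n$, the essential structural fact being that the capacity $\alpha$ enters every cash flow multiplicatively, so that positive homogeneity of degree one in $\alpha$ is preserved under the operator $\ct$. The base case is immediate: $\zeta_0(x,\alpha)=0=\alpha\,\hat\zeta_0(x)$, and $\hat\zeta_0=\mathbf{0}=\hat\ct^0\mathbf{0}$ by definition.

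For the inductive step I would assume $\zeta_n(x,\alpha)=\alpha\,\hat\zeta_n(x)$ \emph{for every} admissible $\alpha$ and first compute the intermediate payoff. Since $A\in(0,1)$ and $\alpha\in(0,1)$ give $A\alpha\in(0,1)$, the hypothesis applies at the degraded capacity $A\alpha$ and yields $\zeta_n(X_{\hat\tau_e},A\alpha)=A\alpha\,\hat\zeta_n(X_{\hat\tau_e})$. Substituting this into the definition of $h^\zeta$ and factoring out $\alpha$ gives
\[
h^{\zeta_n}(x,\alpha)=\alpha\,\ee^x\big\{e^{-r\hat\tau_e}\big(K_c+A\,\hat\zeta_n(X_{\hat\tau_e})\big)\big\}=\alpha\,\hat h^{\hat\zeta_n}(x),
\]
where evaluating the expectation via \eqref{eqn:discount_stop} at the level $x^*$ reproduces exactly the two-case expression in \eqref{eqn:hat_hc}. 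The one point needing care here is that the induction hypothesis must hold for \emph{all} capacities rather than a single fixed $\alpha$, precisely because the degradation factor $A$ shifts the capacity argument of $\zeta_n$ from $\alpha$ to $A\alpha$.

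With this scaling in hand, the payoff appearing inside $\ct\zeta_n(x,\alpha)$ becomes $-\alpha X_\tau+\alpha p_c+h^{\zeta_n}(X_\tau,\alpha)=\alpha\big(-X_\tau+p_c+\hat h^{\hat\zeta_n}(X_\tau)\big)$. Because $\alpha>0$ is a deterministic constant that does not depend on the stopping time $\tau$, it factors through the supremum, so that $\zeta_{n+1}(x,\alpha)=\ct\zeta_n(x,\alpha)=\alpha\,\hat\ct\hat\zeta_n(x)$ with $\hat\ct$ as in \eqref{eqn:hatct}. Setting $\alpha=1$ identifies $\hat\zeta_{n+1}=\hat\ct\hat\zeta_n$, and combining this with the hypothesis $\hat\zeta_n=\hat\ct^n\mathbf{0}$ closes both inductions at once. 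I expect the only genuine obstacle to be bookkeeping: checking that the homogeneity survives the shift $\alpha\mapsto A\alpha$, and that pulling the positive constant $\alpha$ through the supremum is legitimate even when the value is $+\infty$ (multiplication by a positive constant respects suprema in the extended reals). No measure-theoretic difficulty arises, since the admissible class of stopping times and the discount factor $e^{-r\tau}$ are independent of $\alpha$.
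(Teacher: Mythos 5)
Your proof is correct and follows essentially the same route as the paper: induction on $n$, using the hypothesis at the degraded capacity $A\alpha$ to factor $\alpha$ out of $h^{\zeta_n}$ and then pulling the positive constant $\alpha$ through the supremum. Your explicit remark that the induction hypothesis must hold for all capacities (because of the shift $\alpha\mapsto A\alpha$) is a point the paper leaves implicit, but it is the same argument.
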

\begin{Aproof}
The proof is by induction. Clearly, the statement is true for $n=0$. Assume it is true for $n \ge 0$. Then
\[
\zeta_{n+1}(x, \alpha) = \ct \zeta_n(x, \alpha)
=
\alpha \sup_{\tau} \ee^x \big\{ e^{-r\tau} \big( -X_\tau +  p_c + \frac{1}{\alpha} h^{\zeta_n}(X_\tau, \alpha) \big)\ind{\tau < \infty} \big\},
\]
and
\[
\frac{1}{\alpha} h^{\zeta_n}(x, \alpha)
=
E^x \big\{e^{-r\hat \tau_e} \big(K_c + \frac{1}{\alpha} \zeta_n (X_{\hat \tau_e}, A\alpha)\big)\big\}\\
=
E^x\big\{e^{-r\hat \tau_e} \big(K_c + A\hat \zeta_n (X_{\hat \tau_e})\big)\big\}.
\]
Hence, $\zeta_{n+1}(x, \alpha) = \alpha \hat \ct \hat \zeta_n(x) = \alpha \zeta_{n+1}(x, 1)$. Consequently, $\hat \zeta_{n} = \hat \ct^n \mathbf{0}$.
\end{Aproof}

Assume that $\zeta_n(x, \alpha)$ converges to $\zeta(x, \alpha)$ as $n \to \infty$. Then, clearly, $\hat \zeta_n$ converges to $\hat \zeta(x) = \zeta(x, 1)$. It is also clear that $\zeta$ is a fixed point of $\ct$ if and only if $\hat \zeta$ is a fixed point of $\hat \ct$. Therefore, we have simplified the problem to that of finding a limit of $\hat \ct^n \mathbf{0} (x)$. The stopping problem $\hat \ct \hat \zeta$ will be called the \textit{normalised} stopping problem and its payoff denoted by
\begin{equation}\label{eq:hhat}
\hat h(x, \hz) =
\begin{cases}
-x + p_c + \frac{\psi_r(x)}{\psi_r(x^*)} \big(K_c + A\hat \zeta(x^*)\big), & x < x^*,\\
-x + p_c + K_c + A\hat \zeta(x), & x \ge x^*.
\end{cases}
\end{equation}
In particular, $\hat \ct \mathbf{0}$ coincides with the single option value function $V_c$.

{\em Notation.}
In the remainder of this paper a caret (hat) will be used over symbols relating to the normalised lifetime problem:
\begin{equation*}
\hat V(x) = \lim_{n \to \infty} \hat \ct^n \textbf{0} (x).
\end{equation*}

\subsection{Sustainability conditions revisited}\label{subsec:sustainability}
The sustainability conditions {\bf S1} and {\bf S2} introduced in Section \ref{sec:intro} are our standing economic assumptions for the model and options we consider. The next lemma expresses them quantitatively, making way for their use in the mathematical considerations below.
\begin{Lemma}\label{lem:equivsust}
When taken together, the sustainability conditions {\bf S1} and {\bf S2} are equal to the following quantitative conditions: 
\begin{enumerate}
\item[\textbf{S1*}:] $\sup_{x \in (a,b)} h(x) > 0$, and
\item[{\bf S2*}:] $p_c + K_c < x^*$.
\end{enumerate}
\end{Lemma}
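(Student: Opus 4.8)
The plan is to establish the two equivalences separately, since they concern the two parties. For the battery operator, \textbf{S1} asserts that the value of offering and exercising the option is strictly positive; but this value is exactly the single-option quantity $V_c(x)$ of \eqref{eqn:single_option_stopping}, whose payoff is $h$. The equivalence with \textbf{S1*} is then immediate from the remark preceding \eqref{eqn:sup_h_ge_0}. Indeed, if $\sup_{x} h(x) \le 0$ then the best available choice is $\tau = \infty$ and $V_c \equiv 0$, so the BO makes no positive profit; while if $\sup_{x} h(x) > 0$ there is a point $y$ with $h(y) > 0$, and since the boundaries are natural the diffusion is recurrent, so $\tau_y < \infty$ almost surely and stopping at $\tau_y$ gives $\ee^x\{e^{-r\tau_y} h(X_{\tau_y})\} = h(y)\,\ee^x\{e^{-r\tau_y}\} > 0$ for every $x \in I$. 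Thus $V_c > 0$ everywhere precisely when $\sup_x h(x) > 0$, which is \textbf{S1*}.

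For the system operator I would first write out its discounted cash flows. At the sale of the contract (action A2), which takes place at some price $x < x^*$, the SO pays the premium $p_c$; at the exercise time $\hat\tau_e = \inf\{t \ge 0: X_t \ge x^*\}$ it pays the utilisation payment $K_c$ and receives one unit of energy, whose market value at that instant is exactly $x^*$ because the paths are continuous and started below $x^*$. Measured from the sale date, the SO's discounted net payoff along each path is therefore $-p_c + e^{-r\hat\tau_e}(x^* - K_c)$. I would read \textbf{S2} --- that the contract cannot cause a certain loss --- as the requirement that this net payoff be nonnegative with positive probability.

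The crux, and the origin of the strict inequality in \textbf{S2*}, is the behaviour of the discount factor. Since $X$ is regular with continuous paths and is started strictly below $x^*$, we have $\hat\tau_e > 0$ almost surely, so $e^{-r\hat\tau_e} < 1$ almost surely; yet $\{\hat\tau_e < \varepsilon\}$ has positive probability for every $\varepsilon > 0$, so the essential supremum of $e^{-r\hat\tau_e}$ is $1$ and is not attained. Hence the essential supremum of the net payoff equals $(x^* - K_c) - p_c$, again without being attained. If $p_c + K_c < x^*$ this bound is strictly positive (note $K_c < x^*$ here), and on the positive-probability event that $\hat\tau_e$ is small the net payoff is positive, so no certain loss occurs. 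Conversely, if $p_c + K_c \ge x^*$ then $-p_c + e^{-r\hat\tau_e}(x^* - K_c) < (x^* - K_c) - p_c \le 0$ on every path (using $e^{-r\hat\tau_e} < 1$ when $x^* - K_c > 0$, and $e^{-r\hat\tau_e}(x^*-K_c) \le 0$ when $x^* - K_c \le 0$), so the SO is certain to lose. This establishes $\textbf{S2} \Leftrightarrow \textbf{S2*}$. The main obstacle is precisely this upgrade from a weak to a strict inequality: undiscounted accounting at the exercise instant only yields $p_c + K_c \le x^*$, and it is the almost-sure strict positivity of $\hat\tau_e$ --- equivalently, strict discounting between sale and exercise --- that excludes the break-even boundary and delivers \textbf{S2*}. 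I would therefore be careful to justify both $\hat\tau_e > 0$ a.s. and $\mathbb{P}\{\hat\tau_e < \varepsilon\} > 0$, the two diffusion facts on which the argument pivots.
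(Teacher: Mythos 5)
Your proposal is correct and follows essentially the same route as the paper: for \textbf{S1} it waits for the price to hit a point where $h>0$, and for \textbf{S2} it isolates the discount factor $e^{-r\hat\tau_e}$ over the strictly positive time between sale and exercise as the source of the strict inequality $p_c+K_c<x^*$ (the paper phrases the boundary case as the foregone interest on $p_c$ invested in a riskless bond, which is the same observation as your $-p_c(1-e^{-r\hat\tau_e})<0$). One small repair: your justification ``the boundaries are natural, hence the diffusion is recurrent, hence $\tau_y<\infty$ a.s.'' is false in general (e.g.\ Brownian motion with drift, or the negative geometric Brownian motion of the paper's Appendix B, have natural boundaries but are transient); however, the conclusion you actually use, $\ee^x\{e^{-r\tau_y}\}>0$, needs only $\mathbb{P}^x\{\tau_y<\infty\}>0$, which follows from regularity alone, so the argument survives with that substitution.
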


\begin{Aproof}
If {\bf S1*} does not hold then the payoff from cycle A1--A3 is not profitable (on average) for any value of the EIM price $x$, so {\bf S1} does not hold. Conversely if {\bf S1*} holds then there exists $x$ such that $\hat\ct \mathbf{0}(x) \geq h(x) > 0$. For any other $x'$ consider the following strategy: wait until the process $X$ hits $x$ and proceed optimally thereafter. This results in a strictly positive expected value: $\hat\ct \mathbf{0}(x') > 0 $ and by the arbitrariness of $x'$ we have $\hat\ct \mathbf{0} > 0$. 

Suppose that {\bf S2*} holds. Then the SO makes a profit on the option (relative to simply purchasing a unit of energy at the exercise time $\hat \tau_e$, at the price $X(\hat \tau_e) \ge x^*$) in undiscounted cash terms. Considering discounting, the SO similarly makes a profit provided the EIM price reaches the level $x^*$ (or above) sufficiently quickly. Since this happens with positive probability for a regular diffusion, a certain financial loss for the SO is excluded. When {\bf S2*} does not hold, suppose first that $p_c + K_c > x^*$: then the SO makes a loss in undiscounted cash terms, and if the option is sold when $x \geq x^*$ then this loss is certain. In the boundary case $p_c + K_c = x^*$ the BO can only make a profit by purchasing energy and selling the option when $X_t < x^*$, in which case the SO makes a certain loss. This follows since instead of buying the option, the SO could invest $p_c > 0$ temporarily in a riskless bond, withdrawing it with interest when the EIM price rises to $x^*= p_c + K_c$. The loss in this case is equal in value to the interest payment.
\end{Aproof}

Notice that {\bf S1*} is always satisfied when $a \le 0$.

\section{Main results}\label{sec:mainres}

\subsection{Three exhaustive regimes in the single option problem}
\label{sec:single}

In this section we consider the {\em single option problem}. Recall that the sustainability assumptions, or equivalently assumptions {\bf S1*} and {\bf S2*}, are in force. Since the boundary $a$ is natural we have $\phi_r(a+)=\infty$. When $h$ is the single option payoff in \eqref{eq:h1}, the limit $L$ of \eqref{eq:alim} is then
\begin{equation}\label{eq:Lc}
L_c:=\limsup_{x \to a} \frac{-x}{\phi_r(x)},
\end{equation}
and we verify that $R<\infty$ in \eqref{eq:R} since by {\bf S2*}, $h$ is negative on $[x^*,\infty)$.

The general results obtained above are now specialised to the single option problem in the following theorem, which completes our aim {\bf M2} for the single option.
\begin{Theorem}(Single option problem)\label{thm:hammerA}
Assume that conditions {\bf S1*} and {\bf S2*} hold. With the definition \eqref{eq:Lc} 
there are three exclusive cases:
\begin{enumerate}
\item[(A)] $L_c \le \frac{h(x)}{\phi_r(x)}$ for some $x$  $\implies$ there is $\hat x < x^*$ that maximises $\frac{h(x)}{\phi_r(x)}$, and then, for $x \ge \hat x$, $\tau_{\hat x}$ is \textbf{optimal}, and
\begin{equation}\label{eqn:vfnA}
V_c(x) = \phi_r(x) \frac{h(\hat x)}{\phi_r(\hat x)}, \qquad x \ge \hat x.
\end{equation}
\item[(B)] $\infty > L_c > \frac{h(x)}{\phi_r(x)}$ for all x $\implies V_c(x) = L_c\, \phi_r(x)$ and there is \textbf{no optimal} stopping time.
\item[(C)] $L_c = \infty \implies V_c(x) = \infty$ and there is \textbf{no optimal} stopping time.
\end{enumerate}
Moreover, in cases A and B the value function $V_c$ is continuous.
\end{Theorem}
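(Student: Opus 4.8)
The plan is to recognise \eqref{eqn:single_option_stopping} as an instance of the abstract problem \eqref{eqn:full_stopping_problem} with payoff $\vartheta = h$ and then to read the three cases directly off Lemmas \ref{lem:optim_taux}, \ref{lem:no_optimal} and \ref{lem:val_infinite}. The preliminary step is to check that the general quantity $L$ of \eqref{eq:alim} coincides here with $L_c$ of \eqref{eq:Lc}. Writing $g(x) := h(x)/\phi_r(x)$ and recalling $h(x) = -x + p_c + h_c(x)$, the terms $p_c$ and $h_c(x) = K_c \psi_r(x)/\psi_r(x^*)$ stay bounded as $x \to a$ (indeed $h_c(x) \to 0$ since $\psi_r(a+) = 0$), while $\phi_r(a+) = \infty$; hence $\limsup_{x \to a} g(x) = \limsup_{x\to a} (-x)/\phi_r(x) = L_c$. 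Since $h^+/\phi_r = \max(g,0)$ and $L_c \ge 0$, the same computation gives $L = \limsup_{x\to a} h^+/\phi_r = L_c$. I also record that {\bf S1*} gives $\sup_I g > 0$ and {\bf S2*} gives $h(x) = -x + p_c + K_c < 0$ for all $x \ge x^*$.

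Cases (C) and (B) now follow at once. If $L_c = \infty$ then $L = \infty$ and Lemma \ref{lem:val_infinite} yields $V_c \equiv \infty$ with no optimal stopping time. If $\infty > L_c > g(x)$ for all $x$, then Lemma \ref{lem:no_optimal} (applied with $L = L_c$) gives $V_c(x) = L_c\,\phi_r(x)$ and no optimal stopping time. That the three conditions are exhaustive and mutually exclusive is a matter of logic: case (C) is $L_c = \infty$, and when $L_c < \infty$ the statements ``$g(x) \ge L_c$ for some $x$'' and ``$g(x) < L_c$ for all $x$'' partition the remaining possibilities into (A) and (B).

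The heart of the proof is case (A), where I must exhibit a maximiser $\hat x$ of $g$ and show $\hat x < x^*$; supplying this maximiser (the standing hypothesis of Lemma \ref{lem:optim_taux}) is the main obstacle. The function $g$ is continuous on $I$ because $h$ is continuous and $\phi_r > 0$. I will use the two boundary facts established above: $\limsup_{x\to a} g = L_c$, and $g(x) \to -\infty$ as $x \to b$ (since $h < 0$ near $b$ by {\bf S2*} while $\phi_r(b-) = 0$). Set $M := \sup_I g$; then $M \ge \limsup_{x\to a} g = L_c$ automatically, and $M > 0$ by {\bf S1*}. If $M > L_c$, then near-maximising points are trapped in a compact subinterval of $I$ bounded away from both $a$ (where $g$ stays below $L_c + \varepsilon < M$) and $b$ (where $g \to -\infty$), so continuity produces a maximiser $\hat x$; if instead $M = L_c$, the case-(A) hypothesis furnishes a point $x_0$ with $g(x_0) \ge L_c = M$, which is then itself a maximiser. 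In either case a maximiser $\hat x$ exists, and because $g < 0$ on $[x^*, b)$ while $g(\hat x) = M > 0$, necessarily $\hat x < x^*$.

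With $\hat x$ in hand, Lemma \ref{lem:optim_taux} delivers the remaining assertions of case (A): $V_c$ is finite on $I$, the stopping time $\tau_{\hat x}$ is optimal for every $x \ge \hat x$, and $V_c(x) = \phi_r(x)\, h(\hat x)/\phi_r(\hat x)$ there, which is \eqref{eqn:vfnA}. Finally, for the continuity statement, note that cases (A) and (B) both entail $L_c < \infty$; since $R < \infty$ in \eqref{eq:R} has already been verified and $h$ is locally bounded (being continuous), Lemma \ref{lem:continuity} shows $V_c$ is finite and continuous on $(a,b)$.
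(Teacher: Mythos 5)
Your proof is correct and follows essentially the same route as the paper's: identify $L=L_c$ and $R<\infty$, then read cases (A), (B), (C) off Lemmas \ref{lem:optim_taux}, \ref{lem:no_optimal} and \ref{lem:val_infinite}, with continuity from Lemma \ref{lem:continuity}. The only difference is that you spell out the attainment of the supremum of $h/\phi_r$ in case (A) (splitting into $M>L_c$ versus $M=L_c$ and using the boundary behaviour at $a$ and $b$), a step the paper asserts without detail; this is a welcome, not a divergent, addition.
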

\begin{Aproof}
By condition {\bf S1*}, $h(y)$ is positive for some $y \in I$ and the value function $V_c (x) > 0$. For case A note first that the function $h$ is negative on $[x^*, b)$ by {\bf S2*}, see \eqref{eq:one} and \eqref{eq:h1}. Therefore, the supremum of $\frac{h}{\phi_r}$ is positive and must be attained at some (not necessarily unique) $\hx \in (a, x^*)$. The optimality of $\tau_{\hx}$ for $x \geq \hx$ then follows from Lemma \ref{lem:optim_taux}. Case B follows from Lemma \ref{lem:no_optimal} and the fact that $L_c > 0$. Lemma \ref{lem:val_infinite} proves case C. The continuity of $V_c$ follows from Lemma \ref{lem:continuity}.
\end{Aproof}

In case A, an optimal strategy is of a threshold type. When an arbitrary threshold strategy $\tau_{\tilde{x}}$ is used, the resulting expected value for $x \ge \tilde{x}$ is given by $\phi_r(x) h(\tilde x) / \phi_r(\tilde x)$. 
Figure \ref{fig:worthit} (whose problem data fall into case A) shows the potentially high sensitivity of the expected value of discounted cash flows for the single option with respect to the level of the threshold $\tilde x$. It is therefore important in general to identify the optimal threshold accurately. 
\begin{figure}[tb]
\begin{center}
\includegraphics[width=0.5\textwidth]{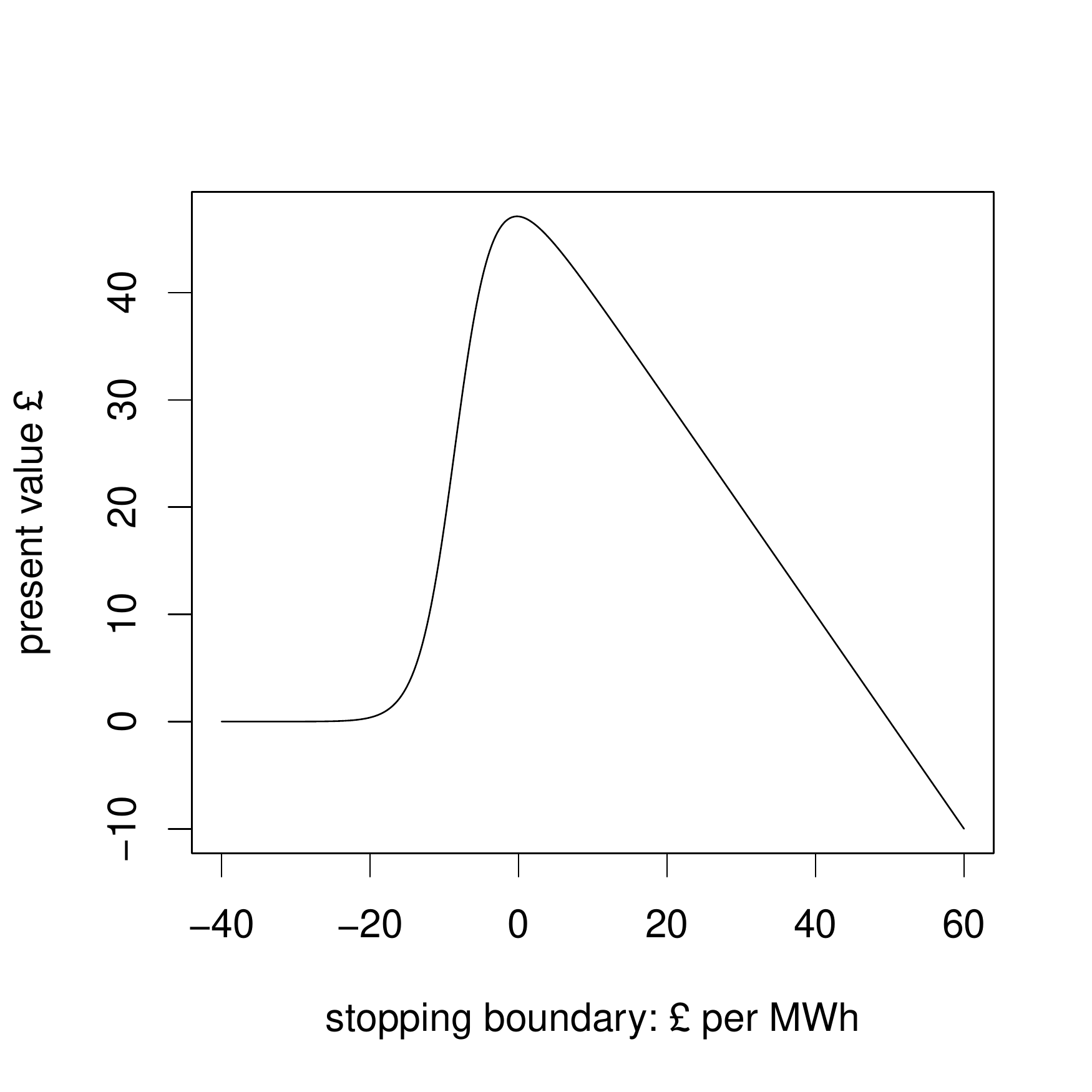}
\caption{Sensitivity of the expected value of the single option with respect to the stopping boundary. The EIM price is modelled as an Ornstein-Uhlenbeck process $dX_t = 3.42(47.66 - X_t )dt + 30.65 dW_t$ (time measured in days, fitted to Elexon Balancing Mechanism price half-hourly data from 07/2011 to 03/2014). The interest rate $r=0.03$, exercise level $x^* = 60$, the option premium $p_c = 10$, and the utilisation payment $K_c = 40$. The initial price is $X_0$ is set equal to $x^*$.
}
\label{fig:worthit}
\end{center}
\end{figure}

We now show that for commonly used diffusion price models, it is case A in the above theorem which is of principal interest. This is due to the mild sufficient conditions established in the following lemma which are satisfied, for example, by all of the examples in Section \ref{sec:examples}. Although condition 2(b) in Lemma \ref{cor:suff_case_A} is rather implicit, it may be interpreted as requiring that the process $X$ does not `escape relatively quickly to $-\infty$' (see  Appendix \ref{sec:lemrmk} for a further discussion and examples) and it is satisfied, for example, by the Ornstein-Uhlenbeck process of equation \eqref{eq:ousde}.

\begin{Lemma}\label{cor:suff_case_A}
If condition \textbf{S1$^*$} holds then:
\begin{enumerate}
\item The equality $L_c = 0$ implies case A of Theorem \ref{thm:hammerA}.
\item The following conditions are sufficient for $L_c = 0$:
\begin{enumerate}
\item $a>-\infty$,
\item $a=-\infty$ and $\lim_{x \to -\infty} \frac{x}{\phi_r(x)} = 0$.
\end{enumerate}
\end{enumerate}
\end{Lemma}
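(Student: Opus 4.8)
The plan is to dispatch both claims directly from the definition $L_c = \limsup_{x\to a} \frac{-x}{\phi_r(x)}$ in \eqref{eq:Lc}, together with two facts already recorded in the excerpt: that $\phi_r$ is strictly positive on $I$, and that $\phi_r(a+) = \infty$ because $a$ is a natural boundary.

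For claim 1, I would argue that $L_c = 0$ forces case A of Theorem \ref{thm:hammerA}. By condition \textbf{S1*} there is a point $x_0 \in I$ with $h(x_0) > 0$; since $\phi_r(x_0) > 0$ this gives $\frac{h(x_0)}{\phi_r(x_0)} > 0 = L_c$. Thus $L_c \le \frac{h(x)}{\phi_r(x)}$ holds at $x = x_0$, which is precisely the defining inequality of case A. As the three cases of the theorem are mutually exclusive, and $L_c = 0$ is finite (excluding case C) and lies strictly below $\frac{h(x_0)}{\phi_r(x_0)}$ (excluding case B), the trichotomy places us in case A.

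For claim 2, I would verify $L_c = 0$ under each hypothesis by a short limit computation for $\frac{-x}{\phi_r(x)}$ as $x \to a$. Under (a), $a > -\infty$, so $-x \to -a$ is finite while $\phi_r(x) \to \phi_r(a+) = \infty$; hence $\frac{-x}{\phi_r(x)} \to 0$ and the limsup is $0$ (note this holds regardless of the sign of $a$, since the ratio tends to $0$ either from above or below). Under (b), $a = -\infty$ and $\lim_{x\to -\infty} \frac{x}{\phi_r(x)} = 0$ is assumed outright, so $\frac{-x}{\phi_r(x)} = -\frac{x}{\phi_r(x)} \to 0$ and again $L_c = 0$.

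There is no serious obstacle here: the argument reduces to an elementary evaluation of a limit. The only points requiring care are to invoke $\phi_r(a+) = \infty$ (valid because the boundaries are natural), rather than merely the monotonicity of $\phi_r$, and to confirm that the equality $L_c = 0$ strictly excludes cases B and C, so that Theorem \ref{thm:hammerA} genuinely delivers case A and not one of the degenerate regimes.
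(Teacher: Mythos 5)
Your proposal is correct and follows essentially the same route as the paper's proof: condition \textbf{S1*} yields a point with $h(x_0)/\phi_r(x_0) > 0 = L_c$, placing the problem in case A, while part 2(a) uses $\phi_r(a+) = \infty$ (from $a$ being natural) and part 2(b) is immediate from the definition. The extra remarks you add (exclusivity of the three cases, the sign of $a$) are harmless elaborations of the same argument.
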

\begin{Aproof}
Condition \textbf{S1$^*$} ensures that $h$ takes positive values. Hence the ratio $\frac{h(x)}{\phi_r(x)} > 0 = L_c$ for some $x$. For assertion 2(a), recall from Section \ref{sec:results} that $\phi_r(a+) = \infty$ since the boundary $a$ is natural. Then we have $L_c = \limsup_{x \to a} (-x) / \phi_r (x) = 0$ as $a > -\infty$. In 2(b), the equality $L_c=0$ is immediate from the definition of $L_c$.  
\end{Aproof}

We now relate Theorem \ref{thm:hammerA} to our motivating problem of Section \ref{sec:setup}.
\begin{Corollary}\label{cor:1summ}
In the setting of Theorem \ref{thm:hammerA} for the single option problem, either 
\begin{itemize}
\item[(a)] the quantity 
\begin{equation}\label{eq:checkx}
\check x := \max \left\{  x \in I:\ \frac{h(x)}{\phi_r(x)} = \sup_{y \in I} \frac{h(y)}{\phi_r(y)} \right\}
\end{equation}
is well-defined, i.e., the set is non-empty. Then $\check x$ is the highest price at which the BO may buy energy when acting optimally (cf. objective \textbf{M1}), and we have $\check x < x^*$ (this is case A); or
\item[(b)] there is no price at which it is optimal for the BO to purchase energy. In this case the single option value function may either be infinite (case C) or finite (case B).  
\end{itemize}
\end{Corollary}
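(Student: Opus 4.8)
The plan is to read the required dichotomy directly off the trichotomy of Theorem~\ref{thm:hammerA}, matching alternative (a) with case A and alternative (b) with cases B and C. First I would treat case A. Theorem~\ref{thm:hammerA} already supplies a maximiser $\hx < x^*$ of $h/\phi_r$, so the supremum $M := \sup_{y \in I} h(y)/\phi_r(y)$ is attained and, by {\bf S1*}, strictly positive; hence the level set $S := \{x \in I : h(x)/\phi_r(x) = M\}$ appearing in \eqref{eq:checkx} is non-empty. It remains to show that $S$ possesses a maximum and that this maximum lies strictly below $x^*$.

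For well-definedness I would argue that $h/\phi_r$ is continuous on $I$ (because $h$ from \eqref{eq:h1}, \eqref{eq:one} is continuous and $\phi_r$ is continuous and strictly positive), so that $S$ is relatively closed in $I$. By {\bf S2*} the payoff $h$ is negative on $[x^*, b)$, whence $h/\phi_r < 0 < M$ there; by continuity this inequality persists on a left-neighbourhood of $x^*$, which bounds $S$ away from $x^*$. Since $S$ also contains $\hx > a$, its supremum is an interior point of $I$ lying strictly below $x^*$, and relative closedness of $S$ forces $\sup S \in S$. This defines $\check x := \max S < x^*$, establishing \eqref{eq:checkx} and the claim $\check x < x^*$.

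Next I would identify $\check x$ with the highest optimal purchase price. On the one hand, $\check x$ is itself a maximiser of $h/\phi_r$, so by claim~1 of Lemma~\ref{lem:optim_taux} the stopping time $\tau_{\check x}$ is optimal for every $x \ge \check x$; buying at $\check x$ is therefore optimal. On the other hand, claim~3 of the same lemma shows that any stopping time $\tau$ for which $h(X_\tau)/\phi_r(X_\tau) < M$ with positive probability on $\{\tau < \infty\}$ is strictly suboptimal; hence any optimal strategy purchases energy only at prices lying in $S$, the largest of which is $\check x$. Combining these two observations yields objective {\bf M1}: $\check x$ is the highest EIM price at which the BO buys energy when acting optimally.

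Finally, for cases B and C I would invoke the assertion in Theorem~\ref{thm:hammerA} that no optimal stopping time exists. Concretely, the estimate in the proof of Lemma~\ref{lem:no_optimal} shows in case B that every stopping time yields strictly less than $V_c(x) = L_c\phi_r(x)$, while in case C the value $V_c(x) = \infty$ is never attained; in either situation purchasing energy (i.e.\ stopping at a finite time with positive probability) is strictly suboptimal, so there is no price at which a purchase is optimal. This is precisely alternative (b), with $V_c$ finite in case B and infinite in case C. The main obstacle I anticipate is the attainment argument for $\check x$: one must combine continuity of $h/\phi_r$ with the sign information from {\bf S2*} to close the maximiser set and confine it strictly below $x^*$. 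The remaining identifications are direct consequences of Lemma~\ref{lem:optim_taux} and Lemma~\ref{lem:no_optimal}.
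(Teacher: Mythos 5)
Your proposal is correct and follows essentially the same route as the paper: non-emptiness and closedness of the maximiser set via continuity of $h$ and $\phi_r$, the bound $\check x < x^*$ from the negativity of $h$ on $[x^*,b)$ forced by \textbf{S2*}, the identification of $\check x$ as the highest optimal purchase price via claims 1 and 3 of Lemma \ref{lem:optim_taux}, and alternative (b) read off from cases B and C of Theorem \ref{thm:hammerA}. The paper's proof is merely a terser rendering of the same argument.
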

\begin{Aproof}
a) 
Since the maximiser $\hx$ in case A of Theorem \ref{thm:hammerA} is not necessarily unique, the set in \eqref{eq:checkx} may contain more than one point. Since $h$ and $\phi_r$ are continuous and all maximisers lie to the left of $x^*$, this set is closed and bounded from above, so $\check x$ is well-defined and  a maximiser in case A.
For any stopping time $\tau$ with $\mathbb{P}^x\{ X_\tau>\check x \} > 0$, it is immediate from assertion 3 of Lemma \ref{lem:optim_taux} that $\tau$ is not optimal for the problem $V_c(x)$, $x \ge \check x$.
Part b) follows directly from cases B and C of Theorem \ref{thm:hammerA}.
\end{Aproof}
In the single option problem, Corollary \ref{cor:1summ} achieves our first aim {\bf M1} from Section \ref{sec:objectives}. It confirms that it is optimal for the BO to buy energy only when the EIM price is strictly lower than the price $x^*$ which would trigger the exercise of the option by the SO. Thus the BO (when acting optimally) does not directly conflict with the SO's balancing actions.

\subsection{Two exhaustive regimes in the lifetime problem}\label{sec:lifetime}

Motivated by the real option valuation of an electricity storage unit, we now turn to the {\em lifetime problem} of valuing an infinite sequence of single option contracts. We begin by letting $\hz(x)$ in definition \eqref{eqn:hat_hc} be a general nonnegative continuation value depending only on the EIM price $x$, and studying the normalised stopping problem \eqref{eqn:hatct} in this case (the payoff $\hat h$ is therefore defined as in \eqref{eq:hhat}).

It follows from the optimal stopping theory reviewed in Section \ref{sec:osmethod} that our next definition,  of an {\em admissible} continuation function, is natural in our setup. In particular, the final condition corresponds to the assumption that the energy purchase occurs at a price below $x^*$.

\begin{Definition}(Admissible continuation value)
A continuation value function $\hz$ is {\em admissible} if it is continuous on $(a, x^*]$ and non-negative on $I$, with $\frac{\hat \zeta(x)}{\phi_r(x)}$ non-increasing on $[x^*, b)$.
\end{Definition}

\begin{Lemma}\label{lem:char}
Assume that conditions {\bf S1*} and {\bf S2*} hold. If $\hz$ is an admissible continuation value function then
\begin{equation}\label{eqn:lim_a_hh_h}
\limsup_{x \to a} \frac{\hat h(x, \hz)}{\phi_r(x)} = \limsup_{x \to a} \frac{-x}{\phi_r(x)} = L_c,
\end{equation}
and with cases A, B, C defined just as in Theorem \ref{thm:hammerA}:
\begin{enumerate}
\item In case A, there exists $\hat x \leq x^*$ which maximises $\frac{\hat h(x, \hz)}{\phi_r(x)}$ and $\tau_{\hx}$ is an optimal stopping time for $x\geq \hx$ with value function
\[
v(x) = \hat \ct \hz(x) = \phi_r(x) \frac{\hh(\hat x, \hz)}{\phi_r(\hat x)}, \qquad x \ge \hat x.
\]
Denoting by $\hx_0$ the corresponding $\hx$ in case A of Theorem \ref{thm:hammerA}, we have $\hx_0\le\hx$.
\item In case B, either 
\begin{itemize}
\item[a)] there exists $x_L \in (a,b)$ with $\frac{\hat h(x_L, \hz)}{\phi_r(x_L)} \geq  L_c$: then there exists $\hx \in (a,x^*]$ which maximises $\frac{\hat h(x, \hz)}{\phi_r(x)}$, and  $\tau_{\hx}$ is an optimal stopping time for $x\geq \hx$ with value function
$v(x) = \phi_r(x) \frac{\hh(\hat x, \hz)}{\phi_r(\hat x)}$ for $x \ge \hat x$; or
\item[b)] there does not exist $x_L \in (a,b)$ with $\frac{\hat h(x_L, \hz)}{\phi_r(x_L)} \ge L_c$: then the value function is $v(x) = L_c\, \phi_r(x)$ and there is no optimal stopping time.
\end{itemize}
\item In case C, the value function is infinite and there is no optimal stopping time.
\end{enumerate}
Moreover, the value function $v$ is continuous in cases A and B.
\end{Lemma}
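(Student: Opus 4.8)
The plan is to apply the three general results of Section~\ref{sec:osmethod} (Lemmas~\ref{lem:optim_taux}, \ref{lem:val_infinite} and \ref{lem:no_optimal}) to the normalised payoff $\hat h(\cdot,\hz)$ of \eqref{eq:hhat}, after first establishing that the left-endpoint quantity $L$ of \eqref{eq:alim} for this payoff coincides with $L_c$. This identity \eqref{eqn:lim_a_hh_h} is the gateway to the whole argument: writing, for $x<x^*$, $\hat h(x,\hz)/\phi_r(x) = \frac{-x}{\phi_r(x)} + \frac{p_c}{\phi_r(x)} + \frac{\psi_r(x)}{\psi_r(x^*)\phi_r(x)}(K_c+A\hz(x^*))$ and recalling that $\phi_r(a+)=\infty$ while $\psi_r(a+)=0$, the last two summands vanish as $x\to a$, so the $\limsup$ is exactly $L_c$. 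Hence the trichotomy ($L_c$ zero, positive-finite, or infinite) carries over unchanged, and in particular case C ($L_c=\infty$) follows at once from Lemma~\ref{lem:val_infinite}.

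Next I record two structural facts that localise the analysis. Since $\hz\ge 0$, comparing \eqref{eq:hhat} with the single-option payoff \eqref{eq:h1} gives $\hat h(\cdot,\hz)\ge h$ pointwise. On $[x^*,b)$ one has $\hat h(x,\hz)=-x+p_c+K_c+A\hz(x)$, so $\frac{\hat h(x,\hz)}{\phi_r(x)} = -\frac{x-(p_c+K_c)}{\phi_r(x)} + A\frac{\hz(x)}{\phi_r(x)}$; by {\bf S2*} we have $x-(p_c+K_c)>0$ there, and since $\phi_r$ is positive and strictly decreasing the map $x\mapsto (x-(p_c+K_c))/\phi_r(x)$ is increasing, making the first summand non-increasing, while admissibility makes the second summand non-increasing. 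Thus $\hat h(\cdot,\hz)/\phi_r$ is non-increasing on $[x^*,b)$, so any maximiser can be taken with $\hat x\le x^*$. The admissibility bound $\hz(x)\le \frac{\hz(x^*)}{\phi_r(x^*)}\phi_r(x)$ also forces $\hz(b-)=0$ and yields $R<\infty$ in \eqref{eq:R} (indeed $R=0$), so that whenever $L_c<\infty$ the value function is continuous by Lemma~\ref{lem:continuity}, giving the final assertion.

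I then split according to the single-option regime. In case A, $\sup_x h/\phi_r\ge L_c$ is attained at some $\hx_0<x^*$; since $\hat h(\cdot,\hz)\ge h$, the supremum of $\hat h(\cdot,\hz)/\phi_r$ is also $\ge L_c$, and as this ratio has $\limsup$ equal to $L_c$ at $a$ and tends to $-\infty$ at $b$ (its numerator being eventually negative while $\phi_r\to 0^+$), it is attained at an interior point $\hx\le x^*$; Lemma~\ref{lem:optim_taux} then delivers optimality of $\tau_{\hx}$ and the stated value $\phi_r(x)\hat h(\hx,\hz)/\phi_r(\hx)$ for $x\ge\hx$. Case B splits on whether $\hat h(x_L,\hz)/\phi_r(x_L)\ge L_c$ for some $x_L$: if so, the same attainment argument and Lemma~\ref{lem:optim_taux} apply (sub-case a); if not, then $L_c>\hat h(\cdot,\hz)/\phi_r$ everywhere with $L_c<\infty$, which is precisely the hypothesis of Lemma~\ref{lem:no_optimal}, giving $v=L_c\,\phi_r$ and no optimiser (sub-case b).

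The remaining, and most delicate, point is the comparison $\hx_0\le\hx$ in case A. Setting $G:=(\hat h(\cdot,\hz)-h)/\phi_r\ge 0$, on $(a,x^*)$ this equals $\frac{A\hz(x^*)}{\psi_r(x^*)}\cdot\frac{\psi_r(x)}{\phi_r(x)}$, which is non-decreasing because $\psi_r/\phi_r$ is increasing. I then argue by a single-crossing step: if some maximiser $\hx$ of $\hat h(\cdot,\hz)/\phi_r = h/\phi_r + G$ satisfied $\hx<\hx_0$, then maximality at $\hx$ and at $\hx_0$ would force $G(\hx)\ge G(\hx_0)$, while monotonicity of $G$ forces $G(\hx)\le G(\hx_0)$, whence equality throughout and $\hx_0$ is itself a maximiser of $\hat h(\cdot,\hz)/\phi_r$; so $\hx$ may always be chosen with $\hx_0\le\hx$. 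I expect this comparative-statics argument, together with the care needed because maximisers are not unique, to be the main obstacle; the rest is a direct bookkeeping application of the Section~\ref{sec:osmethod} lemmas once \eqref{eqn:lim_a_hh_h} is in hand.
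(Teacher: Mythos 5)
Your proposal is correct and follows essentially the same route as the paper: establish \eqref{eqn:lim_a_hh_h} from $\psi_r/\phi_r\to 0$ at $a$, use {\bf S2*} and admissibility to show $\hat h(\cdot,\hz)/\phi_r$ decreases on $[x^*,b)$ so the maximiser lies in $(a,x^*]$, invoke Lemmas \ref{lem:optim_taux}, \ref{lem:no_optimal} and \ref{lem:val_infinite} for the trichotomy, and exploit the monotonicity of $\psi_r/\phi_r$ for the comparison $\hx_0\le\hx$. Your single-crossing argument for that last step is a slight variant of the paper's direct strict inequality and in fact treats the degenerate case $\hz(x^*)=0$ (where maximisers need not move strictly) a little more carefully.
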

\begin{Aproof}
Note that 
\begin{eqnarray}\label{eq:hmh}
h(x)= \hh(x, \mathbf 0) \le \hat h(x, \hz) =
\begin{cases}
h(x) + \frac{\psi_r(x)}{\psi_r(x^*)} A\hat \zeta(x^*), & x < x^*,\\
h(x) + A\hat \zeta(x), & x \ge x^*.
\end{cases}
\end{eqnarray}
This proves \eqref{eqn:lim_a_hh_h}, since $\lim_{x \to a} \psi_r(x) / \phi_r(x) = 0$. 
We verify from \eqref{eq:hmh} and the assumptions of the lemma that $R<\infty$ in \eqref{eq:R}. Hence, whenever $L_c < \infty$ the value function $v$ is finite and continuous by Lemma \ref{lem:continuity}.
As noted previously (in the proof of Theorem \ref{thm:hammerA}), $h$ is negative and decreasing on $[x^*,b)$, hence the ratio $h(x)/\phi_r(x)$ is strictly decreasing on that interval. It then follows from \eqref{eq:hmh} and  the admissibility of $\hz$ that the function $x \mapsto \frac{\hat h(x, \hz)}{\phi_r(x)}$ is strictly decreasing on $[x^*,b)$. Therefore the supremum of $x \mapsto \frac{\hat h(x, \hz)}{\phi_r(x)}$, which is positive by \eqref{eq:hmh} and \textbf{S1$^*$}, is attained on $(a, x^*]$ or asymptotically when $x \to a$. In cases 1 and 2a, the optimality of $\tau_{\hx}$ for $x \geq \hx$ then follows from Lemma \ref{lem:optim_taux}. To see that $\hx_0\le\hx$ in case 1, take $x < \hx_0$. Then from \eqref{eq:hmh} we have
\[
\frac{\hat h(x, \hz)}{\phi_r(x)} = \frac{h(x)}{\phi_r(x)} + \frac{\psi_r(x)}{\phi_r(x)} \frac{A\hat \zeta(x^*)}{\psi_r(x^*)} 
< \frac{h(\hx_0)}{\phi_r(\hx_0)} + \frac{\psi_r(\hx_0)}{\phi_r(\hx_0)} \frac{A\hat \zeta(x^*)}{\psi_r(x^*)} 
= \frac{\hat h(\hx_0, \hz)}{\phi_r(\hx_0)},
\]
since $x \mapsto \frac{\psi_r(x)}{\phi_r(x)}$ is strictly increasing. Case 2b follows from Lemma \ref{lem:no_optimal} and the fact that $L_c > 0$, while Lemma \ref{lem:val_infinite} proves case 3.
\end{Aproof}

Before proceeding we note the following technicalities.

\begin{Remark}\label{rem:const}
The value function $v$ in cases 1 and 2a of Lemma \ref{lem:char} satisfies the condition that $v(x) / \phi_r(x)$ is non-increasing on $[x^*, b)$. Indeed,
\[
\frac{v(x)}{\phi_r(x)} = \frac{\hh(\hat x, \hz)}{\phi_r(\hat x)} = const.
\]
for $x \ge \hx$.
\end{Remark}

\begin{Remark}
For case 3 of Lemma \ref{lem:char}, the assumption that $\frac{\hat \zeta(x)}{\phi_r(x)}$ is non-increasing on $[x^*, b)$ can be dropped.
\end{Remark}

We now wish to study the value of $n$ cycles A1--A3, and hence the lifetime value, by iterating the operator $\hat \ct$. To justify this approach, however, we must first check the timing of action A2 in the lifetime problem. With the actions A1--A3 defined as in Section \ref{sec:setup}, recall that the timing of action A2  is trivial in the single option case: after A1 it is optimal to perform A2 immediately. 

\begin{Lemma}
The timing of action A2 remains trivial when the cycle A1--A3 is iterated a finite number of times.
\end{Lemma}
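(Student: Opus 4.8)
The plan is to show that, once energy has been bought (action A1), postponing the sale of the option (action A2) can never raise the value, so that bundling A1 and A2 inside the operator $\hat\ct$ loses nothing. Suppose A1 is executed at a stopping time $\rho$ and A2 at some later stopping time $\sigma\ge\rho$. By the strong Markov property the value of the A2-and-later cash flows, discounted to time $\rho$ and conditioned on $X_\rho=y$, is $G(y):=\sup_{\sigma}\ee^y\{e^{-r\sigma}\,g(X_\sigma)\ind{\sigma<\infty}\}$ with $g(x):=p_c+\hat h^{\hz}(x)$, and selling immediately corresponds to $\sigma=0$. Since $-y+g(y)=\hat h(y,\hz)$ by \eqref{eq:hhat}, the freely timed problem equals $\sup_\rho\ee^x\{e^{-r\rho}(-X_\rho+G(X_\rho))\ind{\rho<\infty}\}$, which collapses to $\hat\ct\hz(x)$ exactly when $G=g$, i.e. when $\sigma=0$ is optimal in the inner problem. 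As $g\ge0$, a sufficient condition is that $g$ be $r$-excessive, for then $\ee^y\{e^{-r\sigma}g(X_\sigma)\ind{\sigma<\infty}\}\le g(y)$ for every $\sigma$, with equality at $\sigma=0$. The lemma thus reduces to the $r$-excessivity of $g$ for the continuation values $\hz$ arising in the finite iteration.

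To establish this I would split $g$ into the three cash flows it aggregates,
\[
g(x)=p_c+h_c(x)+A\,u_{\hz}(x),\qquad u_{\hz}(x):=\ee^x\{e^{-r\hat\tau_e}\hz(X_{\hat\tau_e})\},
\]
where $h_c$ is given by \eqref{eq:one} and the identity follows by comparison with \eqref{eqn:hat_hc}. The constant $p_c\ge0$ is $r$-excessive. Both $h_c$ and $u_{\hz}$ are expected discounted rewards collected at the first hitting time $\hat\tau_e$ of $[x^*,b)$, i.e. the réduites onto $[x^*,b)$ of the $r$-excessive constant $K_c$ and of $\hz$; the réduite of an $r$-excessive function is again $r$-excessive, so $g$ is $r$-excessive provided $\hz$ is. Concretely this can be checked through the characterisation of \citet[Section 5.2]{Dayanik2003}, by which a continuous nonnegative function is $r$-excessive iff its ratio to $\phi_r$ is concave in $F:=\psi_r/\phi_r$: the ratio $u_{\hz}/\phi_r$ is linear in $F$ on $(a,x^*)$ and coincides with the concave function $\hz/\phi_r$ on $[x^*,b)$. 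The only genuinely non-routine point, and the step I expect to be the main obstacle, is concavity across the junction $x=x^*$; this reduces to the inequality between the left slope $\hz(x^*)/\psi_r(x^*)$ and the right derivative of $\hz/\phi_r$ at $F(x^*)$, which follows from the concavity of $\hz/\phi_r$ together with $\hz\ge0$ by a one-line comparison with the chord from the origin.

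Finally I would supply the $r$-excessivity (and admissibility) of the continuation values actually encountered by induction on the number of remaining cycles $n$. For $n=0$ the continuation is $\mathbf0$, which is admissible and $r$-excessive, and the argument above recovers the single-option case. Assuming $\hat\zeta_n$ is admissible and $r$-excessive, the reduction shows that A2 is optimally performed immediately in the first of $n+1$ cycles, so the $(n+1)$-cycle value equals $\hat\ct\hat\zeta_n=\hat\zeta_{n+1}$; Lemma \ref{lem:char} applies because $\hat\zeta_n$ is admissible, and together with Remark \ref{rem:const} it yields that $\hat\zeta_{n+1}$ is continuous, nonnegative and has $\hat\zeta_{n+1}/\phi_r$ non-increasing on $[x^*,b)$, hence admissible, while as a value function it is $r$-excessive. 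This closes the induction; since the induction hypothesis simultaneously asserts triviality of A2 in all earlier cycles, the timing of A2 is trivial throughout any finite iteration of A1--A3.
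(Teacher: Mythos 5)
Your argument is correct, but it takes a genuinely different route from the paper's. The paper's proof is direct and more elementary: after A1 the remaining reward is split into the premium $p_c$ and the utilisation payment $K_c$ --- both positive constants, hence best received as early as possible --- and the continuation cashflows, whose value is a supremum over stopping times constrained to occur no earlier than the exercise time $\sigma^*$; since $\tau_{A2}=0$ minimises $\sigma^*$ pathwise, it enlarges the feasible set over which that supremum is taken, so the same choice simultaneously maximises all three components. No excessivity of the continuation value is invoked, and the argument is insensitive to whether the value functions are finite. Your route instead isolates the structural reason immediate sale is optimal --- the post-A1 payoff $g=p_c+\hat h^{\hz}$ is $r$-excessive --- proves it by writing $g$ as a sum of r\'eduites onto $[x^*,b)$ of $r$-excessive functions, and closes the loop with an induction in which each $\hat\zeta_n$, being a value function, is itself $r$-excessive. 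This costs more machinery (the r\'eduite/smallest-excessive-majorant facts, and implicitly the finiteness of the value functions, i.e.\ cases A and B, for the excessivity of $\hat\zeta_n$ to be meaningful --- in case C the claim is vacuous), but it buys a sharper statement: $G=g$ precisely when $g$ is $r$-excessive, so your criterion is necessary as well as sufficient and would survive state-dependent premia, where the paper's ``positive and fixed, so sooner is better'' step breaks down. One point to flag: your chord-from-the-origin argument at the junction $F(x^*)$ needs concavity of $\hz/\phi_r$ in $F$ on all of $I$ (global $r$-excessivity of $\hz$), not merely the monotonicity on $[x^*,b)$ required by admissibility; your induction supplies exactly that, so this is not a gap, but it is worth stating explicitly.
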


\begin{Aproof}
Let us suppose that action A1 has just been carried out in preparation for selling the first option in a chain of $n$ options, and that the EIM price currently has the value $x$. Define $\tau_{A2}$ to be the time at which the BO carries out action A2. The remaining cashflows are (i) the first option premium $p_c$ (from action A2), (ii) the first utilisation payment $K_c$ (from A3), and (iii) all cashflows arising from the remaining cycles A1--A3 (there are $n-1$ cycles which remain available to the BO).  The cashflows (i) and (ii) are both positive and fixed, making it best to obtain them as soon as possible. The cashflows (iii) include positive and negative amounts, so their timing is not as simple. However it is sufficient to notice that 
\begin{itemize}
\item their expected net present value is given by an optimal stopping problem, namely, the timing of the {\em next} action A1:
\begin{equation}\label{eqn:cont}
 \sup_{\tau \geq \sigma^*} \ee^x \{ e^{-r \tau} h_{(iii)}(X_\tau) \ind{\tau < \infty} \},
\end{equation}
where $\sigma^* := \inf \{ t \ge \tau_{A2}: X_t \ge x^* \}$, for some suitable payoff function $h_{(iii)}$,
\item the choice $\tau_{A2} = 0$ minimises the exercise time $\sigma^*$ and thus maximises the value of component (iii), since the supremum in \eqref{eqn:cont} is then taken over the largest possible set of stopping times.
\end{itemize}
It is therefore best to set $\tau_{A2} = 0$, since this choice maximises the value of components (i), (ii) and (iii).  
\end{Aproof}

The next result addresses objective {\bf M2} for the lifetime problem by characterising, and establishing the existence of, the lifetime value function $\hat V$.

\begin{Lemma}\label{lem:perp_profit}
In cases A and B of Theorem \ref{thm:hammerA},
\begin{enumerate}
\item For each $n \geq 1$ the function  $\hz_n := \hat \ct^n\mathbf{0}$ satisfies the assumptions of Lemma \ref{lem:char} and is decreasing on $[x^*, b)$.
\item The functions $\hat \ct^n \mathbf{0}$ are strictly positive and uniformly bounded in $n$.
\item The limit $\hat\zeta = \lim_{n \to \infty} \hat \ct^n \mathbf{0}$ exists and is a strictly positive bounded function. Moreover, the lifetime value function $\hat V$ coincides with $\hz$.
\item The lifetime value function $\hat V$ is a fixed point of $\hat \ct$.
\end{enumerate}
\end{Lemma}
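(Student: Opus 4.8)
The plan is to establish the four assertions in order, leaning throughout on two facts about $\hat\ct$: it is monotone (if $\hz\le\hz'$ pointwise then $\hat h(\cdot,\hz)\le\hat h(\cdot,\hz')$ because $A>0$ and $\psi_r>0$, so $\hat\ct\hz\le\hat\ct\hz'$), and, since we are in case A or B, $L_c<\infty$ so every value function encountered is finite and continuous. For assertion 1 I would induct on $n$. The base case $\hz_1=\hat\ct\mathbf 0=V_c$ is admissible and decreasing on $[x^*,b)$: $V_c$ is continuous by Theorem \ref{thm:hammerA}, strictly positive by \textbf{S1*}, and $V_c/\phi_r$ equals a positive constant on $[x^*,b)$ (namely $h(\hx_0)/\phi_r(\hx_0)$ in case A, or $L_c$ in case B), so it is non-increasing there while $V_c$ itself strictly decreases. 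For the inductive step, assuming $\hz_n$ admissible, relation \eqref{eqn:lim_a_hh_h} places the normalised problem with payoff $\hat h(\cdot,\hz_n)$ in the same case (A or B) as the single-option problem, so Lemma \ref{lem:char} together with Remark \ref{rem:const} gives that $\hz_{n+1}=\hat\ct\hz_n$ is continuous, has $\hz_{n+1}/\phi_r$ constant (hence non-increasing) and $\hz_{n+1}$ strictly decreasing on $[x^*,b)$; non-negativity is immediate from monotonicity since $\hz_{n+1}\ge\hat\ct\mathbf 0=V_c>0$.

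For assertion 2 strict positivity is free: $\mathbf 0\le\hat\ct\mathbf 0$ and monotonicity give $\hz_n\uparrow$, so $\hz_n\ge\hz_1=V_c>0$. The uniform bound is the crux of this part, and I would extract it from a scalar recursion. Put $c_n:=\sup_x \hz_n(x)/\phi_r(x)$; the structure from assertion 1 gives $c_n=\hz_n(x^*)/\phi_r(x^*)$ and $\hz_n\le c_n\phi_r$ everywhere (the latter from the excessivity bound \eqref{eqn:upb}). Substituting $\hz_n(x^*)=c_n\phi_r(x^*)$ into \eqref{eq:hmh} and using that $x\mapsto\psi_r(x)/\phi_r(x)$ is increasing, so that $\psi_r(x)\phi_r(x^*)/(\phi_r(x)\psi_r(x^*))\le 1$ for $x\le x^*$, one finds $\hat h(x,\hz_n)/\phi_r(x)\le \sup_y h(y)/\phi_r(y)+Ac_n$ for every $x$, whence $c_{n+1}\le c_1+Ac_n$. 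As $A\in(0,1)$ this gives $c_n\le c_1/(1-A)$ for all $n$, so every $\hz_n$ is dominated by the single fixed finite function $\tfrac{c_1}{1-A}\phi_r$; restricted to $[x^*,b)$, where $\phi_r$ is bounded, this is a uniform constant bound.

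Assertion 3 is then immediate: being increasing and bounded above by $\tfrac{c_1}{1-A}\phi_r$, the sequence $\hz_n$ converges pointwise to a finite, strictly positive limit $\hz$, and $\hz=\hat V$ by the defining limit \eqref{eqn:hat_V}. For assertion 4 I must show $\hat\ct\hz=\hz$, i.e. that $\hat\ct$ is continuous from below along $(\hz_n)$. One inequality is easy: $\hz_n\le\hz$ and monotonicity give $\hz_{n+1}=\hat\ct\hz_n\le\hat\ct\hz$, and letting $n\to\infty$ yields $\hz\le\hat\ct\hz$.

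For the reverse inequality I would fix a stopping time $\tau$ and write $\hat h(\cdot,\hz_n)=h+G_n$, where $G_n\ge0$ is the continuation term built from $\hz_n$ and increases pointwise to $G_\infty$ (the term built from $\hz$). Monotone convergence applied to the nonnegative quantity $\ee^x\{e^{-r\tau}G_n(X_\tau)\ind{\tau<\infty}\}$ then gives $\ee^x\{e^{-r\tau}\hat h(X_\tau,\hz)\ind{\tau<\infty}\}=\lim_n\ee^x\{e^{-r\tau}\hat h(X_\tau,\hz_n)\ind{\tau<\infty}\}\le\lim_n\hz_{n+1}(x)=\hz(x)$, and taking the supremum over $\tau$ delivers $\hat\ct\hz\le\hz$. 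I expect this interchange of supremum and limit to be the main obstacle: before splitting off the $n$-independent term $\ee^x\{e^{-r\tau}h(X_\tau)\ind{\tau<\infty}\}$ one must know the expectation is well defined, which requires controlling the negative part of $h$ (unbounded below when $a=-\infty$); stopping times for which this integral diverges make the value $-\infty$ and satisfy the bound trivially, so they may be discarded, leaving the clean monotone-convergence argument on the remainder.
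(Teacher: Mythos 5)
Your proof is correct and follows essentially the same route as the paper: induction via Lemma \ref{lem:char} and the constancy of $\hz_n/\phi_r$ on $[x^*,b)$ for part 1, a geometric-series bound exploiting $A<1$ for part 2 (your recursion $c_{n+1}\le c_1+A c_n$ is precisely the paper's bound $\hat\ct\hz_n(x)\le V_c(x)+A\hz_n(x^*)$ evaluated at $x=x^*$ and divided by $\phi_r(x^*)$), and the interchange of $\sup_\tau$ with the monotone limit in $n$ for parts 3--4, where your explicit handling of integrability is if anything more careful than the paper's. The only slip is the parenthetical in your last paragraph: $h(x)=-x+p_c+h_c(x)$ is unbounded \emph{below} when $b=+\infty$ (not when $a=-\infty$, where it is unbounded above), but this does not affect the argument.
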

\begin{Aproof}
We prove part 1 by induction. The claim is clearly true for $n=1$. Assume it holds for $n$. Then Lemma \ref{lem:char} applies and $\hz_{n+1}(x) / \phi_r(x) = \hh(\hx, \hz_n) / \phi_r(\hx)$ for $x \ge \hx$ when the optimal stopping time exists and $\hz_{n+1}(x) / \phi_r(x) = L_c$ otherwise. Therefore, $\hz_{n+1}(x) = c \phi_r(x)$ for $x \ge x^*$ and some constant $c \ge 0$. Since $\phi_r$ is decreasing, we conclude that $\hz_{n+1}$ decreases on $[x^*, b)$. 

The monotonicity of $\hat\ct$ guarantees that if $\hat\ct \mathbf{0} > 0$ then $\hat\ct^n \mathbf{0} > 0$ for every $n$. For the upper bound, notice that 
\begin{align*}
\hat \ct \hz_n(x) &= \sup_{\tau} \ee^{x} \Big\{e^{-r \tau} \Big(p_c - X_\tau + \ee^{X_\tau} \big\{ e^{-r \hat \tau_e} \big(K_c + A\hz_n(X_{\hat \tau_e})\big) \big\} \Big)\ind{\tau < \infty}\Big\}\\
&\le \sup_{\tau} \ee^{x} \Big\{e^{-r \tau} \Big(p_c - X_\tau + K_c \ee^{X_\tau} \{ e^{-r \hat\tau_{e}} \} \Big)\ind{\tau < \infty}\Big\} + A \hz_n(x^*) = V_c(x) + A \hz_n(x^*),
\end{align*}
where $V_c = \hat\ct \mathbf{0}$  is the value function for the single option contract and the inequality follows from the fact that $\hz_n$ is decreasing on $[x^*, b)$. From the above we have $\hat \zeta_n (x) = \hat \ct^n \mathbf{0} (x) \le V_c(x) + \frac{1 - A^n}{1-A} V_c(x^*)$. Recalling that $A \in (0,1)$ yields that the $\hat \zeta_n(x)$ are bounded by $V_c(x) + \frac{1}{1-A} V_c(x^*)$, so there exists a finite monotone limit $\hat \zeta:=\lim_{n \to \infty} \hz_n$, and 
\begin{align*}
\hat \zeta (x)=\lim_{n \to \infty} \hat \ct \hz_n(x)
&= \sup_{n} \sup_{\tau} \ee^{x} \Big\{e^{-r \tau} \Big(p_c - X_\tau + \ee^{X_\tau} \big\{ e^{-r \hat \tau_e} \big(K_c + A\hz_n(X_{\hat \tau_e})\big) \big\} \Big)\ind{\tau < \infty}\Big\}\\
&= \sup_{\tau} \lim_{n \to \infty} \ee^{x} \Big\{e^{-r \tau} \Big(p_c - X_\tau + \ee^{X_\tau} \big\{ e^{-r \hat \tau_e} \big(K_c + A\hz_n(X_{\hat \tau_e})\big) \big\} \Big)\ind{\tau < \infty}\Big\}\\
&= \sup_{\tau} \ee^{x} \Big\{e^{-r \tau} \Big(p_c - X_\tau + \ee^{X_\tau} \big\{ e^{-r \hat \tau_e} \big(K_c + A\hz(X_{\hat \tau_e})\big) \big\} \Big)\ind{\tau < \infty}\Big\}\\
&= \hat \ct \hz(x),
\end{align*}
by monotone convergence. The equality of $\hat V$ and $\hz$ is clear from \eqref{eqn:hat_V}. 
 \end{Aproof}

We may now provide the following answer to objective \textbf{M1} for the lifetime problem.

\begin{Corollary}\label{cor:2summ}
In the setting of Lemma \ref{lem:char} with $\hz = \hat V$, either:
\begin{itemize}
\item[(a)] the quantity 
\begin{equation}\label{eq:checkx2}
\check x := \max \left\{  x \in I:\ \frac{\hh(x,\hz)}{\phi_r(x)} = \sup_{y \in I} \frac{\hh(y,\hz)}{\phi_r(y)} \right\}
\end{equation}
is well-defined, i.e., the set is non-empty. Then $\check x$ is the highest price at which the BO can buy energy when acting optimally in the lifetime problem, and we have $\check x < x^*$ (cases 1 and 2a); or
\item[(b)] there is no price at which it is optimal for the BO to purchase energy. In this case the lifetime value function may either be infinite (case 3) or finite (case 2b).
\end{itemize}
\end{Corollary}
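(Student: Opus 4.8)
The plan is to apply Lemma~\ref{lem:char} with the admissible continuation value $\hz = \hat V$, mirroring the single-option argument of Corollary~\ref{cor:1summ} but supplying a genuinely new ingredient to obtain the strict bound $\check x < x^*$. First I would record that $\hat V$ is admissible so that Lemma~\ref{lem:char} may be invoked: by Lemma~\ref{lem:perp_profit}, in cases A and B the limit $\hat V = \lim_n \hat\ct^n\mathbf 0$ is finite, strictly positive, bounded and a fixed point $\hat V = \hat\ct\hat V$; the non-increasing behaviour of $\hz(x)/\phi_r(x)$ on $[x^*,b)$ is preserved under the monotone limit (Remark~\ref{rem:const}), and continuity on $(a,x^*]$ follows since $\hat V=\hat\ct\hat V$ is a value function, continuous by Lemma~\ref{lem:char}. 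In case C we have $L_c=\infty$, which already forces the single-option value $V_c=\infty$ and hence an infinite lifetime value, giving alternative (b) directly. With $\hz = \hat V$ admissible, the five sub-cases of Lemma~\ref{lem:char} split exactly as required: cases 1 and 2a (where an optimal stopping time exists) furnish alternative (a), while cases 2b and 3 (no optimal stopping time) furnish alternative (b).

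Writing $g(x) := \hh(x,\hz)/\phi_r(x)$ and $M := \sup_{y\in I} g(y)$, I would next address well-definedness of $\check x$ in cases 1 and 2a. There Lemma~\ref{lem:char} provides a maximiser $\hx \le x^*$ of $g$, and from the proof of that lemma $g$ is continuous and strictly decreasing on $[x^*,b)$. Hence every maximiser of $g$ lies in $(a,x^*]$, so the set in \eqref{eq:checkx2} is non-empty and bounded above by $x^*$; once the strict bound of the next step is in hand, this set is a relatively closed subset of $(a,x^*)$ with supremum strictly below $x^*$, so by continuity of $g$ the supremum is attained and $\check x$ is well-defined.

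The hard part is upgrading $\hx \le x^*$ to $\check x < x^*$. Unlike the single-option case, $\hh(\cdot,\hz)$ need not be negative on $[x^*,b)$ because of the positive continuation term $A\hat V$, so the argument of Corollary~\ref{cor:1summ} does not transfer, and I would instead exploit the fixed-point structure. Since $\hx \le x^*$, Lemma~\ref{lem:char} gives $\hat V(x)/\phi_r(x) = M$ for all $x \ge \hx$, whence $\hat V(x^*) = M\,\phi_r(x^*)$. Evaluating $g$ at $x^*$ through the lower branch of \eqref{eq:hhat} (with $\hz=\hat V$) yields
\[
M - g(x^*) = \frac{\hat V(x^*) - \big(-x^* + p_c + K_c + A\hat V(x^*)\big)}{\phi_r(x^*)} = \frac{(1-A)\hat V(x^*) + \big(x^* - p_c - K_c\big)}{\phi_r(x^*)}.
\]
Both summands in the numerator are strictly positive — by $A\in(0,1)$ and $\hat V(x^*)>0$ for the first, and by \textbf{S2*} for the second — so $g(x^*) < M$. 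As $g$ is strictly decreasing on $[x^*,b)$, no point of $[x^*,b)$ maximises $g$, and therefore $\check x < x^*$.

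Finally I would identify $\check x$ operationally and dispatch alternative (b). For any $x > \check x$ one has $g(x) < M$ (by maximality of $\check x$ on $(\check x,x^*)$, and by $g(x)\le g(x^*)<M$ for $x\ge x^*$), so assertion~3 of Lemma~\ref{lem:optim_taux} shows any stopping time charging $\{X_\tau > \check x\}$ is strictly suboptimal, while $\tau_{\check x}$ is optimal for $x\ge\check x$ because $\check x$ maximises $g$; hence $\check x$ is the highest EIM price at which the BO purchases when acting optimally. In cases 2b and 3 of Lemma~\ref{lem:char} the supremum $M$ (resp.\ $L_c=\infty$) is not attained, so $g$ has no maximiser, the set in \eqref{eq:checkx2} is empty, and no optimal purchase price exists; the lifetime value function is $L_c\,\phi_r$ (finite) in case 2b and infinite in case 3, yielding alternative (b).
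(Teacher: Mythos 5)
Your proof is correct and, in its overall skeleton, matches the paper: everything except the strict inequality $\check x < x^*$ is a direct transfer of the argument of Corollary \ref{cor:1summ} via Lemma \ref{lem:char} and assertion 3 of Lemma \ref{lem:optim_taux}, which is exactly what the paper does. Where you genuinely diverge is in the key step $\check x < x^*$. The paper argues by contradiction with an economic story: if $\check x = x^*$ then exercise, repurchase and resale occur instantaneously and repeatedly at the price $x^*$, each cycle losing money by \textbf{S2$^*$}, which is claimed to force $\hat V(x^*) = -\infty$ and contradict $\hat V > 0$. You instead exploit the fixed-point identity quantitatively: from $\hat V = \hat\ct \hat V$ and Lemma \ref{lem:char} you get $\hat V(x^*) = M\,\phi_r(x^*)$ with $M = \sup_{y}\hh(y,\hat V)/\phi_r(y)$, and a one-line evaluation of the payoff at $x^*$ gives $M - \hh(x^*,\hat V)/\phi_r(x^*) = \bigl((1-A)\hat V(x^*) + (x^* - p_c - K_c)\bigr)/\phi_r(x^*) > 0$, so $x^*$ (and, by the strict monotonicity of the ratio on $[x^*,b)$, every larger price) cannot be a maximiser. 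This is the same mechanism --- \textbf{S2$^*$} combined with the back-to-back structure --- but your rendering is sharper and arguably more rigorous: it makes explicit where $A\in(0,1)$ and the strict positivity of $\hat V$ enter, and it sidesteps the paper's informal ``unbounded losses almost surely'' claim (which, with degradation factor $A<1$, would in fact yield a bounded negative value rather than $-\infty$; the contradiction with $\hat V>0$ survives either way). One small caveat: your justification that $\hat V$ is admissible is mildly circular, since you derive continuity of $\hat V$ from Lemma \ref{lem:char}, whose application presupposes admissibility; the paper obtains continuity of $\hat V$ from the uniform convergence $\hat\ct^n\mathbf{0}\to\hat V$ recorded in Theorem \ref{thm:lifetime_summary}. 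As the corollary is stated ``in the setting of Lemma \ref{lem:char} with $\hz=\hat V$'', admissibility is a hypothesis rather than something to re-prove, so this is a quibble, not a gap.
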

\begin{Aproof}
The proof proceeds exactly as that of Corollary \ref{cor:1summ} with the exception of showing that $\check x < x^*$ (this is because Lemma \ref{lem:char} does not guarantee the strict inequality $\check x < x^*$). Assume then $\check x = x^*$. At the EIM price $X_t=\check x=x^*$ the call option exercise by the SO is immediately followed by the purchase of energy by the BO and this cycle can be repeated instantaneously, arbitrarily many times, when options are sold back-to-back. However since each such cycle is loss making for the BO by condition {\bf S2$^*$}, this strategy would lead to unbounded losses almost surely in the lifetime problem started at EIM price $x^*$ leading to $\hat V(x^*) = -\infty$. This would contradict the fact that  $\hat V>0$, so we conclude that $\check x <x^*$.
\end{Aproof}

Pursuing objective {\bf M2} a step further, we will show now that there are two regimes in the lifetime problem: either the lifetime value function is strictly greater than the single option value function, or it is only the purchase of power and the sale of the first option that contributes to the overall lifetime profit. In the latter case, the lifetime value equals the single option value.
\begin{Theorem}\label{thm:regimes}
There are two exclusive regimes:
\begin{itemize}
\item[($\alpha$)] $\hat V(x) > V_c(x)$ for all $x \ge x^*$,
\item[($\beta$)] $\hat V(x) = V_c(x)$  for all $x \ge x^*$ (or both are infinite for all $x$).
\end{itemize}
Moreover, in regime ($\alpha$) an optimal stopping time exists (that is, cases 1 or 2a of Lemma \ref{lem:char} hold) when the continuation value is $\hz = \hz_n = \hat\ct^n \mathbf{0}$ for $n > 0$ (that is, for a finite number of options), and when $\hz = \hat V$ (for the lifetime value function).
\end{Theorem}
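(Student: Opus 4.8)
The plan is to reduce the comparison of $\hat V$ and $V_c$ on the exercise region $[x^*,b)$ to a comparison of two scalars. First I would record the ordering $\hat V\ge V_c$: since $V_c=\hat \ct\mathbf 0>0$ dominates $\mathbf 0$ and $\hat \ct$ is monotone, the iterates $\hz_n=\hat \ct^n\mathbf 0$ increase in $n$, so $\hat V=\lim_n\hz_n\ge\hz_1=V_c$. Next I would invoke the structural fact that, on $[x^*,b)$, every normalised value function is a constant multiple of $\phi_r$: for $V_c$ this follows from Theorem \ref{thm:hammerA} together with $\check x<x^*$ (Corollary \ref{cor:1summ}), giving $V_c=c_1\phi_r$; for $\hat V$, which is admissible by Lemma \ref{lem:perp_profit}, it follows from Lemma \ref{lem:char} and Remark \ref{rem:const} (or $\hat V=L_c\phi_r$ in case 2b), giving $\hat V=c_2\phi_r$. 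Since $\phi_r>0$, the difference $\hat V-V_c=(c_2-c_1)\phi_r$ has a single fixed sign on $[x^*,b)$, and $\hat V\ge V_c$ forces $c_2\ge c_1$. Hence either $c_2>c_1$ (regime ($\alpha$)) or $c_2=c_1$ (regime ($\beta$)), with no intermediate possibility; case C, where both functions are identically infinite, is placed in regime ($\beta$). This establishes exclusivity and exhaustiveness.

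For the ``moreover'' part I would first observe that regime ($\alpha$) excludes case C (there both values are infinite, i.e.\ regime ($\beta$)), so $L_c<\infty$ and we are in case A or case B of Theorem \ref{thm:hammerA}. If case A holds then case 1 of Lemma \ref{lem:char} applies to \emph{any} admissible continuation value, so an optimal stopping time exists for each $\hz_n$ and for $\hat V$ (all admissible by Lemma \ref{lem:perp_profit}), and there is nothing further to prove. In case B one has $c_1=L_c$, so regime ($\alpha$) reads $c_2>L_c$. Applying Lemma \ref{lem:char} to the fixed point $\hz=\hat V$ (recall $\hat V=\hat \ct\hat V$ from Lemma \ref{lem:perp_profit}), case 2b is impossible, since it would give $\hat V=L_c\phi_r$ on $[x^*,b)$, i.e.\ $c_2=L_c$; therefore case 2a holds and $\tau_{\hx}$ is optimal for $\hat V$.

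The main obstacle is to exclude the degenerate case 2b for the \emph{finite} iterates $\hz_n$ ($n\ge1$), not merely for the limit $\hat V$. The key is that the payoff $\hat h$ in \eqref{eq:hhat}, and hence $\hat \ct\hz$, depends on $\hz$ only through its restriction to $[x^*,b)$. Since each $\hz_n$ equals $c^{(n)}\phi_r$ there, the iteration collapses to a scalar recursion $c^{(n+1)}=F(c^{(n)})$ with $F(c)=\max(\tilde M(c),L_c)$, where $\tilde M(c):=\sup_y \hat h(y,\hz)/\phi_r(y)$ for an admissible $\hz$ equal to $c\,\phi_r$ on $[x^*,b)$ is nondecreasing in $c$; here $c^{(0)}=0$, $c^{(1)}=L_c$, and $c^{(n)}\uparrow c_2$. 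The iterate $\hz_n$ lands in case 2a exactly when $\tilde M(c^{(n)})\ge L_c$. Now if $\tilde M(L_c)\le L_c$ then $F(L_c)=L_c$, so $c^{(1)}=L_c$ would be a fixed point of the deterministic recursion, forcing $c^{(n)}\equiv L_c$ and $c_2=L_c$, contradicting $c_2>L_c$ in regime ($\alpha$). Hence $\tilde M(L_c)>L_c$, and since $c^{(n)}\ge c^{(1)}=L_c$ for $n\ge1$ with $\tilde M$ nondecreasing, $\tilde M(c^{(n)})>L_c$ for every $n\ge1$; each $\hz_n$ is therefore in case 2a and admits the optimal stopping time $\tau_{\hx}$. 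I expect this scalar-recursion reduction, enabled by the restriction-dependence of $\hat \ct$, to be the crux of the argument.
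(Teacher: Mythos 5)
Your argument is correct and reaches the same conclusions, but it is organised differently from the paper's proof. The paper argues by a case analysis on Lemma \ref{lem:char} applied with continuation value $\hz = V_c = \hz_1$: in each of cases 1, 2a, 2b and 3 it compares $\hz_2$ with $\hz_1$ on $[x^*,b)$ and then propagates the resulting strict inequality (or equality) to all $n$ and to $\hat V$ by monotonicity of $\hat\ct$, holding the maximiser $\hat x$ fixed and using that $\hat h(\hat x,\hz)$ is increasing in $\hz(x^*)$. You instead extract the two structural facts underlying that case analysis --- every $\hz_n$ (hence $\hat V$) is a constant multiple $c^{(n)}\phi_r$ of $\phi_r$ on $[x^*,b)$, and $\hat\ct\hz$ depends on $\hz$ only through its restriction to $[x^*,b)$ --- and collapse the whole iteration to a monotone scalar recursion $c^{(n+1)}=F(c^{(n)})$. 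This yields the exhaustiveness of the dichotomy immediately (the sign of $c_2-c_1$ is constant on $[x^*,b)$) without any case analysis, and turns the ``moreover'' part into the observation that if $L_c$ were a fixed point of $F$ then regime ($\alpha$) could not hold. The two proofs use identical ingredients; yours buys a cleaner separation between the dichotomy and the existence statement, at the price of introducing $\tilde M$ and checking its monotonicity, which the paper avoids by evaluating $\hat h(\cdot,\hz_n)/\phi_r$ at the single point $\hat x$. One small tightening: to get $\hat V=c_2\phi_r$ on $[x^*,b)$ you should pass to the limit in $\hz_n=c^{(n)}\phi_r$ (which is what the proof of Lemma \ref{lem:perp_profit} actually delivers) rather than apply Lemma \ref{lem:char} to $\hat V$ itself, since the continuity of $\hat V$ on $(a,x^*]$ --- part of admissibility --- is only established later, in Theorem \ref{thm:lifetime_summary}, whose proof in turn relies on the present theorem.
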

\begin{Aproof}
We take the continuation value $\hz = V_c$ in Lemma \ref{lem:char} and consider separately its cases 1, 2a, 2b and 3. Firstly in case 3 we have $V_c = \infty$, implying that also $\hat V = \infty$ and we have regime $(\beta)$. 

Case 2 of Lemma \ref{lem:char} corresponds to case B of Theorem \ref{thm:hammerA}, when there is no optimal stopping time in the single option problem and $V_c(x)=L_c \phi_r(x)$ for all $x \in I$. Considering first case 2b and defining $\hz_n$ as in Lemma \ref{lem:perp_profit}, it follows that $\hz_2(x) = L_c \phi_r(x) = V_c(x)$ for $x \in I$ and consequently $\hat V = V_c$, which again corresponds to regime ($\beta$). 

In case 2a of Lemma \ref{lem:char}, suppose first that the maximiser $\hx \leq x^*$ is such that $\frac{\hat h(\hat x, \hz_1)}{\phi_r(\hat x)} = L_c$. Then for $x \ge x^* \ge \hat x$ we have $\hz_2(x) = \phi_r(x) \frac{\hat h(\hat x, \hz_1)}{\phi_r(\hat x)} = L_c \phi_r(x) $, which also yields regime ($\beta$). On the other hand, when $\frac{\hat h(\hat x, \hz_1)}{\phi_r(\hat x)} > L_c$ we have for $x \ge x^* \ge \hat x$ that $\hz_2(x) = \phi_r(x) \frac{\hat h(\hat x, \hz_1)}{\phi_r(\hat x)} > L_c \phi_r(x) = \hz_1(x)$, and so regime ($\alpha$) applies by the monotonicity of the operator $\hat\ct$. From the definition of $\hh$ in \eqref{eq:hhat}, and holding the point $\hat x \leq x^*$ constant, this monotonicity implies that $\frac{\hat h(\hat x, \hz_n)}{\phi_r(\hat x)} > L_c$ for all $n>1$ and that $\frac{\hat h(\hat x, \hat V)}{\phi_r(\hat x)} > L_c$. We conclude that case 2a of Lemma \ref{lem:char} applies (rather than case 2b) for a finite number of options and also in the lifetime problem.

Considering now the maximiser $\hx$ defined in case 1 of Lemma \ref{lem:char}, we have for $x \ge x^* \ge \hat x$ that
\[
\hz_2(x) = \phi_r(x) \frac{\hat h(\hat x, \hz_1)}{\phi_r(\hat x)} \ge \frac{\hat h(\hat x_0, \hz_1)}{\phi_r(\hat x_0)} > \frac{\hat h(\hat x_0, \mathbf{0})}{\phi_r(\hat x_0)} = \hz_1(x) = V_c(x),
\]
and regime $(\alpha)$ again follows by monotonicity. Also, trivially, case 1 of Lemma \ref{lem:char} applies for $\hz = \hz_n$ and $\hz = \hat V$.
\end{Aproof}

The following corollary follows immediately from the preceding proof.

\begin{Corollary}
Regime ($\beta$) holds if and only if  $\hat\ct^2 \mathbf{0}(x) = \hat\ct \mathbf{0}(x)$ for all  $x \ge x^*$. 
\end{Corollary}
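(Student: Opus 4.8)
The plan is to recognise that the claimed identity is simply the $n=2$ rung of the increasing ladder $\hz_n := \hat\ct^n \mathbf 0$ introduced in Lemma \ref{lem:perp_profit}. Indeed $\hat\ct \mathbf 0 = V_c = \hz_1$ and $\hat\ct^2 \mathbf 0 = \hz_2$, so the corollary asserts that regime ($\beta$) is equivalent to $\hz_2 = \hz_1$ on $[x^*, b)$. I would establish the two implications separately, using only the monotonicity of $\hat\ct$ (already exploited in Lemma \ref{lem:perp_profit}) together with one structural observation about the payoff $\hat h$.

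For the direction ($\beta$) $\Rightarrow$ $\hz_2 = \hz_1$, I would invoke monotonicity: since $\mathbf 0 \le \hz_1$, applying $\hat\ct$ repeatedly gives the increasing chain $\hz_1 \le \hz_2 \le \cdots$ with pointwise limit $\hat V$, whence $\hz_1 \le \hz_2 \le \hat V$ everywhere. In regime ($\beta$) we have $\hat V = V_c = \hz_1$ on $[x^*, b)$ (or all three are identically infinite), so the squeeze $\hz_1 \le \hz_2 \le \hat V = \hz_1$ forces $\hz_2 = \hz_1$ on $[x^*, b)$.

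For the converse, the key observation is read directly off \eqref{eq:hhat}: for $x < x^*$ the normalised payoff $\hat h(x, \hz)$ involves $\hz$ only through the single value $\hz(x^*)$, while for $x \ge x^*$ it involves only $\hz(x)$. Hence $\hat\ct \hz$ is determined entirely by the restriction $\hz|_{[x^*, b)}$. Consequently, if $\hz_2 = \hz_1$ on $[x^*, b)$, then $\hat\ct \hz_2 = \hat\ct \hz_1$ as functions on all of $I$, i.e. $\hz_3 = \hz_2$ everywhere; a one-line induction then yields $\hz_n = \hz_2$ for every $n \ge 2$, so that $\hat V = \hz_2$ and in particular $\hat V = \hz_1 = V_c$ on $[x^*, b)$. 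This is precisely regime ($\beta$).

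The argument is essentially a repackaging of the case analysis in the proof of Theorem \ref{thm:regimes}, where each of the exhaustive cases decides the regime according to whether $\hz_2$ equals or strictly exceeds $V_c = \hz_1$ on $[x^*, b)$; so I do not expect a genuine obstacle. The only point requiring care is the infinite case (case C of Theorem \ref{thm:hammerA}, equivalently case 3 of Lemma \ref{lem:char}): there $V_c \equiv \infty$ forces $\hz_2 = \hz_1 = \infty$ and the monotone limit $\hat V \equiv \infty$, so both implications degenerate consistently into the ``both infinite'' clause of regime ($\beta$).
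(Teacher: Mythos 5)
Your proof is correct. The paper itself gives no separate argument here: it simply remarks that the corollary ``follows immediately from the preceding proof'', i.e.\ from re-reading the case analysis of Theorem \ref{thm:regimes}, where each exhaustive case is resolved into regime ($\alpha$) or ($\beta$) precisely according to whether $\hz_2$ strictly exceeds or equals $\hz_1$ on $[x^*,b)$. Your route is a direct, case-free repackaging of the same mechanism, and it buys something concrete. The forward implication via the monotone squeeze $\hz_1\le\hz_2\le\hat V$ is exactly the monotonicity the paper already exploits. The converse is where your argument adds value: the observation that, by \eqref{eq:hhat}, $\hat h(\cdot,\hz)$ sees $\hz$ only through $\hz(x^*)$ below $x^*$ and through $\hz(x)$ above, so that $\hat\ct\hz$ is determined by $\hz|_{[x^*,b)}$, is precisely the (unstated) justification the paper needs for its ``consequently $\hat V = V_c$'' steps in cases 2a and 2b of Theorem \ref{thm:regimes}, where the equality $\hz_2=\hz_1$ is established only on $[x^*,b)$ and not on all of $I$. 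Making that locality explicit, and then running the one-line induction $\hz_n=\hz_2$ for $n\ge 2$, turns the corollary into a self-contained statement independent of which case of Lemma \ref{lem:char} obtains; the infinite case is handled consistently as you note. No gaps.
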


To address the implicit nature of our answers to {\bf M1} and {\bf M2} for the lifetime problem, in the next section we provide results for the construction and verification of the lifetime value function and corresponding stopping time. For this purpose we close this section by summarising results obtained above (making use of additional results from Appendix \ref{sec:uniqueness}).

\begin{Theorem}\label{thm:lifetime_summary}
In the setting of Theorem \ref{thm:regimes} assume that regime ($\alpha$) holds. Then the lifetime value function $\hat V$ is continuous, is a fixed point of the operator $\hat \ct$ and $\hat \ct^n \mathbf{0}$ converges to $\hat V$ exponentially fast in the supremum norm. Moreover, there is $\check x < x^*$ such that $\tau_{\check x}$ is an optimal stopping time for $\hat \ct \hat V(x)$ when $x \ge \check x$ and, furthermore, $\check x$ is the highest price at which the BO can buy energy when acting optimally.
\end{Theorem}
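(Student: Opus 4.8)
The plan is to assemble the pieces established above and to supply the one genuinely new ingredient, a contraction estimate. The fixed-point property (second claim) has already been proved in Lemma~\ref{lem:perp_profit}, part~4. Continuity (first claim) then follows immediately: regime~($\alpha$) places us in case~1 or case~2a of Lemma~\ref{lem:char} when $\hz=\hat V$ (by Theorem~\ref{thm:regimes}), and since $\hat V=\hat\ct\hat V$, the concluding continuity assertion of Lemma~\ref{lem:char} applied to $v=\hat\ct\hat V=\hat V$ delivers the claim.

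For the exponential convergence (third claim) I would show that $\hat\ct$ contracts with modulus $A\in(0,1)$ in the supremum norm along the relevant orbit. The key observation is a Lipschitz bound on the payoff in its continuation argument: for two bounded continuation values $\hz,\hz'$, definition~\eqref{eqn:hat_hc} gives
\[
\big|\hat h^{\hz}(x)-\hat h^{\hz'}(x)\big|\le A\,\|\hz-\hz'\|_\infty,\qquad x\in I,
\]
where for $x<x^*$ one uses $\psi_r(x)/\psi_r(x^*)\le 1$ (since $\psi_r$ is increasing). Because the terms $-X_\tau+p_c$ cancel in the difference, applying the elementary inequality $|\sup_\tau f-\sup_\tau g|\le\sup_\tau|f-g|$ (valid as both value functions are finite under regime~($\alpha$)) together with $e^{-r\tau}\ind{\tau<\infty}\le 1$ transfers this to
\[
\|\hat\ct\hz-\hat\ct\hz'\|_\infty\le A\,\|\hz-\hz'\|_\infty.
\]
Taking $\hz=\hat\ct^{n-1}\mathbf 0$ and $\hz'=\hat V$, both bounded by Lemma~\ref{lem:perp_profit}, part~2, and using $\hat V=\hat\ct\hat V$ yields $\|\hat\ct^n\mathbf 0-\hat V\|_\infty\le A\,\|\hat\ct^{n-1}\mathbf 0-\hat V\|_\infty$, hence $\|\hat\ct^n\mathbf 0-\hat V\|_\infty\le A^n\|\hat V\|_\infty$ by iteration, which is the asserted exponential rate.

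The statements concerning $\check x$ (final claim) are then read off from Corollary~\ref{cor:2summ} taken with $\hz=\hat V$. Under regime~($\alpha$) we are in case~1 or~2a of Lemma~\ref{lem:char}, which is precisely alternative~(a) of that corollary: the set in \eqref{eq:checkx2} is non-empty, so $\check x$ is well-defined, satisfies $\check x<x^*$, and is the highest price at which the BO buys energy when acting optimally. Since $\check x$ is itself a maximiser of $x\mapsto\hh(x,\hat V)/\phi_r(x)$, Lemma~\ref{lem:optim_taux} shows that $\tau_{\check x}$ is optimal for $\hat\ct\hat V(x)=\hat V(x)$ whenever $x\ge\check x$, while assertion~3 of the same lemma supplies the ``highest price'' characterisation.

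The main obstacle is the contraction step. The delicate points are that the supremum ranges over possibly infinite stopping times, so one must retain the indicator $\ind{\tau<\infty}$ throughout and exploit $e^{-r\tau}\ind{\tau<\infty}\le 1$; and that the estimate is only needed along the bounded orbit $\{\hat\ct^n\mathbf 0\}_n\cup\{\hat V\}$, which neatly avoids having to verify that $\hat\ct$ preserves boundedness on all bounded functions. This contraction also yields uniqueness of the fixed point among bounded functions, consistent with Lemma~\ref{lem:perp_profit} and with Appendix~\ref{sec:uniqueness}, confirming that the limit of the iterates is indeed $\hat V$.
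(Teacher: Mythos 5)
Your overall architecture is sound and the last claim (reading off $\check x$ from Corollary \ref{cor:2summ} with $\hz=\hat V$ and invoking Lemma \ref{lem:optim_taux}) matches what the theorem intends. The genuine problem is in your contraction step, specifically its final inequality. The bound $\|\hat\ct^n\mathbf 0-\hat V\|_\infty\le A^n\|\hat V\|_\infty$ is vacuous in the cases this paper cares about: Lemma \ref{lem:perp_profit}, part 2, does \emph{not} give a supremum-norm bound — its proof shows $\hat\ct^n\mathbf 0(x)\le V_c(x)+\tfrac{1}{1-A}V_c(x^*)$, a pointwise bound by a function that is itself unbounded whenever $a=-\infty$, since $V_c(x)\ge h(x)\ge -x+p_c\to+\infty$ as $x\to a$. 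This is exactly the situation for the OU price model of Section \ref{subsec:OU}, so $\|\hat V\|_\infty=\infty$ there and your iteration, started from $\hz'=\mathbf 0$, proves nothing. The per-step Lipschitz estimate $\|\hat\ct\hz-\hat\ct\hz'\|_\infty\le A\|\hz-\hz'\|_\infty$ is itself fine (the $-X_\tau+p_c$ terms cancel and $\psi_r(x)/\psi_r(x^*)\le 1$ below $x^*$), so the fix is short: start the iteration one step later. From the same proof of Lemma \ref{lem:perp_profit} one has $0\le\hat V-V_c\le\tfrac{A}{1-A}V_c(x^*)$, i.e. $\|\hat V-\hat\ct\mathbf 0\|_\infty<\infty$, whence $\|\hat\ct^n\mathbf 0-\hat V\|_\infty\le A^{n-1}\|\hat\ct\mathbf 0-\hat V\|_\infty\le\tfrac{A^n}{1-A}V_c(x^*)$. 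A secondary, smaller issue: deducing continuity by applying Lemma \ref{lem:char} with $\hz=\hat V$ presupposes $\hat V$ is admissible, hence already continuous on $(a,x^*]$ — circular as stated; appeal instead directly to Lemma \ref{lem:continuity} (the payoff $\hh(\cdot,\hat V)$ is locally bounded and $L=L_c<\infty$, $R<\infty$), or to uniform convergence of the continuous iterates once the contraction is repaired.

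For comparison, the paper's route (Appendix \ref{sec:uniqueness}) sidesteps the unboundedness issue differently: it contracts in the seminorm $\|f\|_\#=|f(x^*)|$, which is always finite, with factor $\rho=A\,\psi_r(x')/\psi_r(x^*)<1$, and then upgrades to the supremum norm via the one-step estimate \eqref{eqn:ct_seminorm_bound}. That argument exploits the structural fact that the continuation value enters the problem only through its value at $x^*$, but it needs the extra hypothesis that optimal stopping occurs strictly below $x^*$ (verified in Corollary \ref{cor:fp} using \textbf{S2*}). Your supremum-norm contraction needs no such localisation, which is a genuine simplification — but only once the starting point of the iteration is corrected as above.
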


\subsection{Construction and verification of the lifetime value function}\label{sec:lifetime_constr}

In this section we provide two lemmas for the lifetime problem which will be useful in its numerical solution. They are based on the problem's structure as summarised in Theorem \ref{thm:lifetime_summary}.
Firstly, Lemma \ref{lem:num1} provides a means of constructing the lifetime value function, together with the value $\check x$ of Theorem  \ref{thm:lifetime_summary}, using a one-dimensional search. Secondly, Lemma \ref{lem:neval} enables the result of such a search to be verified as the lifetime value function. We assume that regime ($\alpha$) of Theorem \ref{thm:regimes} holds.

\subsubsection{Construction}

\begin{Lemma}\label{lem:num1}
The lifetime value function evaluated at $x^*$ satisfies
\begin{equation}\label{eq:yy0}
\hat V(x^*) = \max_{z \in (a, x^*)} y(z),
\end{equation}
where
\begin{equation}\label{eq:yy}
y(z) := \frac{-z + p_c + \frac{\psi_r(z)}{\psi_r(x^*)} K_c}{\frac{\phi_r(z)}{\phi_r(x^*)} - \frac{\psi_r(z)}{\psi_r(x^*)}A}.
\end{equation}
\end{Lemma}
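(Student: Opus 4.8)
The plan is to exploit the fixed-point structure established for regime ($\alpha$). By Lemma \ref{lem:perp_profit} the lifetime value function $\hat V$ is a strictly positive, bounded fixed point of $\hat\ct$, and by Theorem \ref{thm:lifetime_summary} there is a threshold $\check x < x^*$ such that $\tau_{\check x}$ is optimal for $\hat\ct\hat V(x)$ whenever $x \ge \check x$. I would prove the two inequalities $\hat V(x^*) \ge y(z)$ for every $z \in (a,x^*)$ and $\hat V(x^*) = y(\check x)$ separately; since $\check x \in (a,x^*)$, combining them shows that the supremum of $y$ over $(a,x^*)$ is attained at $\check x$ and equals $\hat V(x^*)$.

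For the inequality I would start from the fixed-point identity $\hat V(x^*) = \hat\ct\hat V(x^*) = \sup_\tau \ee^{x^*}\{e^{-r\tau}\hh(X_\tau,\hat V)\ind{\tau<\infty}\}$ and test it against the (generally sub-optimal) threshold stopping time $\tau_z$ for an arbitrary fixed $z \in (a,x^*)$. Since $z < x^*$, equation \eqref{eqn:discount_stop} gives the discount factor $\ee^{x^*}\{e^{-r\tau_z}\ind{\tau_z<\infty}\} = \phi_r(x^*)/\phi_r(z)$, while the relevant branch of \eqref{eq:hhat} gives $\hh(z,\hat V) = -z + p_c + \frac{\psi_r(z)}{\psi_r(x^*)}\big(K_c + A\hat V(x^*)\big)$. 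This produces a linear inequality in the single finite unknown $\hat V(x^*)$, in which $\hat V(x^*)$ reappears on the right through the continuation term $A\hat V(x^*)$. Collecting those terms and dividing by the coefficient $1 - \frac{\phi_r(x^*)}{\phi_r(z)}\frac{\psi_r(z)}{\psi_r(x^*)}A$, then multiplying numerator and denominator by $\phi_r(z)/\phi_r(x^*)$, yields exactly $\hat V(x^*) \ge y(z)$ with $y$ as in \eqref{eq:yy}.

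The one step requiring care, and the main (though mild) obstacle, is the sign of the coefficient by which I divide. Because $z < x^*$ and, as recorded just after \eqref{eq:egf}, $\phi_r$ is strictly decreasing while $\psi_r$ is strictly increasing, both ratios $\phi_r(x^*)/\phi_r(z)$ and $\psi_r(z)/\psi_r(x^*)$ lie in $(0,1)$; with $A \in (0,1)$ their product is strictly below $1$, so the coefficient is strictly positive. This both preserves the direction of the inequality upon division and guarantees that the denominator of $y(z)$ in \eqref{eq:yy} is positive, so that $y(z)$ is well defined.

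For the matching equality I would run the identical computation with $z = \check x$, now invoking that $\tau_{\check x}$ is optimal for $\hat\ct\hat V(x^*)$ (Theorem \ref{thm:lifetime_summary}, valid since $x^* \ge \check x$). The test relation becomes an equality, so the same algebra delivers $\hat V(x^*) = y(\check x)$. As $\check x \in (a,x^*)$, this gives $\max_{z \in (a,x^*)} y(z) \ge y(\check x) = \hat V(x^*) \ge \sup_{z \in (a,x^*)} y(z)$, whence the supremum is attained at $\check x$ and equals $\hat V(x^*)$, establishing \eqref{eq:yy0}.
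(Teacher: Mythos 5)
Your proof is correct and follows essentially the same route as the paper: the paper obtains the identity by writing the recursion for the total value of the stationary threshold strategy $\tau_z$ under $\mathbb{P}^{x^*}$ and rearranging, then invokes Theorem \ref{thm:lifetime_summary} for attainment at $\check x$, which is exactly the algebra and the attainment argument you use. Your version merely packages the lower bound $\hat V(x^*)\ge y(z)$ via the fixed-point identity $\hat V=\hat\ct\hat V$ tested against $\tau_z$, and your explicit check that the coefficient $1-\frac{\phi_r(x^*)}{\phi_r(z)}\frac{\psi_r(z)}{\psi_r(x^*)}A$ is strictly positive is a welcome detail the paper leaves implicit.
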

\begin{Aproof}
Fix $z \in (a, x^*)$. In the normalised lifetime problem of Section \ref{sec:roundtrip}, suppose that the strategy $\tau_z$ is used for each energy purchase. Writing $y$ for the total value of this strategy under $\mathbb{P}^{x^*}$, by  construction we have the recursion
\[
y=\frac{\phi_r(x^*)}{\phi_r(z)} \Big(-z + p_c + \frac{\psi_r(z)}{\psi_r(x^*)} \big(K_c + Ay\big)\Big).
\]
Rearranging, we obtain \eqref{eq:yy}. By Theorem  \ref{thm:lifetime_summary}, there exists an optimal strategy $\tau_{\check x}$ of the above form under $\mathbb{P}^{x^*}$ and \eqref{eq:yy0} follows.
\end{Aproof}

Hence under $\mathbb P^{x^*}$ an optimal stopping level $\hat x$ can be found by maximising $y(z)$ over $z \in (a, x^*)$. The value $\check x$ of Theorem  \ref{thm:lifetime_summary} is given by $\check x = \max\{ x:\ y(x) = \max_{z \in (a, x^*)} y (z) \}$.

\subsubsection{Verification}

We now provide a verification lemma which may be used to confirm the result if search \eqref{eq:yy0} is performed \emph{numerically},
or indeed to verify solutions found by  other means. The result is motivated by the following argument using Theorem \ref{thm:lifetime_summary}.

We claim that for all $x \in I$, $\hat\ct \hat V(x)$ depends on the value function $\hat V$ only through its value at $x=x^*$. The argument is as follows: when the BO acts optimally, the energy purchase occurs when the price is not greater than $x^*$: under $\mathbb P^x$ for $x \geq x^*$, this follows directly from Theorem \ref{thm:lifetime_summary}; under $\mathbb P^x$ for $x < x^*$, the energy is either purchased before the price reaches $x^*$ or one applies a standard dynamic programming argument for optimal stopping problems (see, for example, \cite{Peskir2006}) at $x^*$ to reduce this to the previous case. In our setup the continuation value is not received until the EIM price rises again to $x^*$ (it is received immediately if the energy purchase occurs at $x^*$). 

Suppose therefore that we can construct functions $V_i:I \to \mathbb R$, $i=1,2$, with the following properties:
\begin{enumerate}
\item[i)] $\hat \ct V_1 = V_2$,
\item[ii)] $V_1(x^*)=V_2(x^*)$,
\item[iii)] for $i=1,2$, the highest price at which the BO buys energy in the problem $\hat \ct V_i$ is not greater than $x^*$.
\end{enumerate}
Then we have $V_2 = \hat \ct V_1=\hat \ct  V_2$, so that $V_2$ is a fixed point of $\hat \ct$. 

We postulate the following form for $V_i$: given $y>0$ take
\begin{eqnarray}
V_1(x)&=&\hio^y(x) := \ind{x \le x^*} y,\\
V_2(x)&=&\hi^y(x):=\hat \ct \hio^y (x).
\end{eqnarray}
For convenience define $\mfh(x,y)$ to be the payoff in the lifetime problem when the the continuation value is $\hi^y_0$. Thus we have
\begin{eqnarray}
\mfh(x, y) &=& \hh(x, \hi^y_0), \label{eq:hiz}
\\
\hi^y(x) &=& \hat \ct \hio^y (x) = \sup_{\tau} \ee^x \big\{ e^{-r\tau} \mfh\big(X_\tau, y\big)\ind{\tau < \infty} \big\}. \label{eq:hi}
\end{eqnarray}
\begin{Lemma}\label{lem:neval}
Suppose that $\hat x \in (a, x^*)$ satisfies the system
\begin{eqnarray}
\frac{\mfh(\hat x, y)}{\phi_r(\hat x)} &=& \sup_{x \in (a, x^*)} \frac{\mfh(x, y)}{\phi_r(x)}, \label{eq:defy1}\\
y&=&\frac{\phi_r(x^*)}{\phi_r(\hat x)}\mfh(\hat x,y),\label{eq:defy}\\
y&>&0. \label{eq:yspos}
\end{eqnarray}
Then the function $\hi^y$ of \eqref{eq:hi}
is a fixed point of $\hat \ct$, is continuous and strictly positive, and
\begin{equation}\label{eqn:neval1}
\hi^y(x) = \frac{\phi_r(x)}{\phi_r(x^*)} y, \quad \text{for $x \ge \hat x$}.
\end{equation}
\end{Lemma}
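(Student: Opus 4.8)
The plan is to verify the three conditions i)--iii) from the discussion preceding the lemma, so that $\hi^y = \hat\ct V_1 = \hat\ct V_2$ becomes a fixed point of $\hat\ct$. First I would observe that the hypotheses \eqref{eq:defy1}--\eqref{eq:yspos} are engineered precisely to make $\hio^y$ play the role of $V_1$ and $\hi^y = \hat\ct\hio^y$ the role of $V_2$. The key is that the payoff $\mfh(x,y) = \hh(x, \hio^y)$ depends on the continuation value $\hio^y$ only through its constant value $y$ on $(a,x^*]$, and that $\hio^y$ vanishes on $[x^*,b)$. Thus $\hio^y$ is trivially admissible (continuous on $(a,x^*]$, nonnegative, and $\hio^y(x)/\phi_r(x) = 0$ is non-increasing on $[x^*,b)$), so Lemma \ref{lem:char} applies to the stopping problem \eqref{eq:hi}. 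Condition \eqref{eq:defy1} states that $\hx$ maximises $\mfh(\cdot,y)/\phi_r$ over $(a,x^*)$; combined with the fact (established in the proof of Lemma \ref{lem:char}) that $\hh(\cdot,\hz)/\phi_r$ is strictly decreasing on $[x^*,b)$, the maximiser over all of $I$ lies in $(a,x^*]$, so $\hx$ is a genuine global maximiser and Lemma \ref{lem:optim_taux} gives that $\tau_{\hx}$ is optimal with
\[
\hi^y(x) = \phi_r(x)\,\frac{\mfh(\hx,y)}{\phi_r(\hx)}, \qquad x \ge \hx.
\]

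Next I would use \eqref{eq:defy}, which rearranges to $\mfh(\hx,y)/\phi_r(\hx) = y/\phi_r(x^*)$, to substitute into the displayed value function and obtain exactly \eqref{eqn:neval1}, namely $\hi^y(x) = \phi_r(x)\, y/\phi_r(x^*)$ for $x \ge \hx$. Evaluating at $x = x^*$ gives $\hi^y(x^*) = y = \hio^y(x^*)$, which is condition ii). Condition iii) is immediate: the maximiser $\hx$ lies in $(a,x^*)$ by the search domain in \eqref{eq:defy1}, so the highest buying price in the problem $\hat\ct\hio^y$ is $\hx < x^*$; and the highest buying price in the problem $\hat\ct\hi^y$ is likewise below $x^*$ because $\hi^y$ is itself admissible (being of the form $\text{const}\cdot\phi_r$ on $[x^*,b)$ it satisfies the monotonicity requirement, cf. Remark \ref{rem:const}), so Lemma \ref{lem:char} again places its maximiser in $(a,x^*]$, and strictness follows as in Corollary \ref{cor:2summ}. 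Condition i), $\hat\ct V_1 = V_2$, holds by the very definition $V_2 = \hi^y = \hat\ct\hio^y = \hat\ct V_1$.

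With i)--iii) verified, the argument preceding the lemma yields $\hi^y = \hat\ct\hio^y = \hat\ct\hi^y$, so $\hi^y$ is a fixed point of $\hat\ct$. Continuity follows from Lemma \ref{lem:char} (cases 1 or 2a apply since the maximum is attained at $\hx$), and strict positivity follows from $y>0$ in \eqref{eq:yspos} together with $\hi^y(x) = \phi_r(x)\,y/\phi_r(x^*) > 0$ for $x \ge \hx$ and the standard propagation argument (as in the proof of condition \textbf{S1*} in Lemma \ref{lem:equivsust}) that a strictly positive value at one point forces strict positivity everywhere via the recurrence of $X$.

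I expect the main obstacle to be the justification that $\hi^y$ is itself admissible so that the fixed-point identity $\hat\ct\hi^y = \hi^y$ is legitimate, rather than merely $\hat\ct\hio^y$: one must check that $\hi^y(x)/\phi_r(x)$ is non-increasing on $[x^*,b)$, which relies on the explicit form \eqref{eqn:neval1} showing this ratio is constant there. A secondary subtlety is ensuring that the maximiser produced by \eqref{eq:defy1}, which is sought only over $(a,x^*)$, is genuinely a global maximiser over $I$; this is where the strict decrease of $\hh(\cdot,\hz)/\phi_r$ on $[x^*,b)$ from the proof of Lemma \ref{lem:char} is essential, and I would state it explicitly rather than leave it implicit.
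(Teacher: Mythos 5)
Your proposal is correct and follows essentially the same route as the paper: verify properties i)--iii) from the preceding discussion by applying Lemma \ref{lem:char} with the admissible continuation value $\hio^y$, use \eqref{eq:defy1} and the strict decrease of the normalised payoff on $[x^*,b)$ to identify $\tau_{\hat x}$ as optimal, derive \eqref{eqn:neval1} from \eqref{eq:defy}, and then confirm that $\hi^y$ is itself admissible (constant ratio with $\phi_r$ on $[x^*,b)$) so that the fixed-point identity and the positivity/continuity claims follow. The only cosmetic difference is that you justify attainment of the global maximiser directly from \eqref{eq:defy1} and the monotonicity on $[x^*,b)$, where the paper invokes the standing regime ($\alpha$) assumption; both are valid.
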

\begin{Aproof}
Consider first the problem \eqref{eq:hi} with $x \ge \hat x$. By construction $\hi^y_0$ is an admissible continuation value in Lemma \ref{lem:char}, and  cases 1 or 2a must then hold due to the standing assumption for this section that regime ($\alpha$) of Theorem \ref{thm:regimes} is in force. By 
\eqref{eq:defy1} the stopping time $\tau_{\hat x}$ is optimal, and the problem's value function $\hi^y$ has the following three properties. Firstly, $\hi^y$ is continuous on $I$ by Lemma \ref{lem:continuity}. Secondly, using \eqref{eq:defy} we see that $\hi^y$ satisfies \eqref{eqn:neval1}. This implies thirdly that $\hi^y/\phi_r$ is constant on $[x^*,b)$ and establishes that $\hi^y(x^*)=y$, giving property ii) above. Since $y>0$ by \eqref{eq:yspos}, the strict positivity of $\hi^y$ everywhere follows as in part 1 of the proof of Lemma \ref{lem:equivsust}.
 Our standing assumption {\bf S2*} implies that the payoff $\mfh(x, y)$ of \eqref{eq:hiz} is negative for $x > x^*$, which establishes property iii) for problem \eqref{eq:hi}.
 
The three properties of $\xi^y$ established above make it an admissible continuation value in Lemma \ref{lem:char}, so we now consider the problem $\hat \ct \xi^y$ for $x \ge \hat x$. Under $\mathbb P^{x}$ for $x \geq x^*$, claim 2 of Lemma \ref{lem:optim_taux} prevents the BO from buying energy at prices greater than $x^*$ when acting optimally; under $\mathbb P^{x}$ for $x < x^*$, the dynamic programming principle mentioned above completes the argument.
\end{Aproof}

The following corollary completes the verification argument, and also establishes the uniqueness of the value $y$ in Lemma \ref{lem:neval}.

\begin{Corollary}\label{cor:fp}
Under the conditions of Lemma \ref{lem:neval}:
\begin{enumerate}
\item[i)] the function $\hi^y$ coincides with the lifetime value function: $\hat V = \hi^y$,
\item[ii)] there is at most one value $y$  for which the system equations \eqref{eq:defy1} and \eqref{eq:defy} has a solution $\hat x \in (a, x^*)$.
\end{enumerate}
\end{Corollary}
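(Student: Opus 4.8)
The plan is to deduce both claims from a single fact: in regime ($\alpha$) the operator $\hat\ct$ possesses a \emph{unique} admissible fixed point. By Lemma \ref{lem:perp_profit} the lifetime value function $\hat V$ is such a fixed point — it is continuous by Theorem \ref{thm:lifetime_summary}, strictly positive, and $\hat V/\phi_r$ is constant (hence non-increasing) on $[x^*,b)$ by Remark \ref{rem:const} — while Lemma \ref{lem:neval} exhibits $\hi^y$ as a second admissible fixed point. Once uniqueness is established, claim i) is immediate. Claim ii) then follows with no extra work: if $(\hat x,y)$ solves \eqref{eq:defy1}--\eqref{eq:defy} then $\hi^y(x^*)=y$ by Lemma \ref{lem:neval}, so i) forces $y=\hi^y(x^*)=\hat V(x^*)$, a number determined by $\hat V$ alone; hence at most one admissible $y$ can arise.

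For the uniqueness I would use a contraction estimate — the same mechanism underlying the exponential convergence recorded in Theorem \ref{thm:lifetime_summary} and the auxiliary results of Appendix \ref{sec:uniqueness}. The structural input is that, in regime ($\alpha$), the optimal energy purchase occurs at a price not exceeding $x^*$ (Corollary \ref{cor:2summ}, Theorem \ref{thm:lifetime_summary}), while on $(a,x^*]$ the payoff \eqref{eq:hhat} depends on the continuation value only through $\hz(x^*)$, entering with the factor $\frac{\psi_r(\cdot)}{\psi_r(x^*)}A\le A$. Concretely, for admissible $\hz,\hz'$ I would take an optimal stopping time $\tau^*$ for $\hat\ct\hz$ (which exists in regime ($\alpha$) and satisfies $X_{\tau^*}\le x^*$) and estimate
\[
\hat\ct\hz(x)-\hat\ct\hz'(x)\le \ee^x\big\{e^{-r\tau^*}\big(\hat h(X_{\tau^*},\hz)-\hat h(X_{\tau^*},\hz')\big)\ind{\tau^*<\infty}\big\}\le A\,|\hz(x^*)-\hz'(x^*)|\,\frac{\psi_r(x)}{\psi_r(x^*)},
\]
the last step using the $r$-excessivity of $\psi_r$. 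Dividing by $\phi_r(x)$, and noting that on $[\hat x,b)$ the ratio $\frac{\hat\ct\hz-\hat\ct\hz'}{\phi_r}$ is in fact constant by the explicit value formula of Lemma \ref{lem:char}, a symmetric argument yields $\|\hat\ct\hz-\hat\ct\hz'\|\le A\|\hz-\hz'\|$ with $A\in(0,1)$, in the supremum norm weighted by $1/\phi_r$ (equivalently, on $[\check x,b)$ where $\phi_r$ is bounded). Since $\|V_c/\phi_r\|_\infty<\infty$ by \eqref{eqn:upb}, both $\hat V$ and $\hi^y$ lie at finite distance from $\mathbf{0}$ in this norm.

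With the contraction in hand I would conclude directly. By Lemma \ref{lem:perp_profit}, $\hat V=\lim_{n\to\infty}\hat\ct^n\mathbf{0}$, whereas $\hi^y$ is a fixed point, so $\hat\ct^n\hi^y=\hi^y$ for every $n$. Applying the contraction $n$ times gives
\[
\|\hat\ct^n\mathbf{0}-\hi^y\|=\|\hat\ct^n\mathbf{0}-\hat\ct^n\hi^y\|\le A^n\|\mathbf{0}-\hi^y\|\longrightarrow 0,
\]
so that $\hat V=\hi^y$, which is claim i); claim ii) then follows as described above.

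The main obstacle I anticipate is bookkeeping around the norm rather than any deep difficulty: because the state space may extend to $a=-\infty$, the fixed points can grow like $\phi_r$ near $a$, so the contraction must be phrased in the $\phi_r$-weighted norm (or restricted to $[\check x,b)$) and one must check that the relevant differences remain finite there. The genuinely essential ingredient — that in regime ($\alpha$) the purchase is made strictly below $x^*$, so that the continuation value feeds back only through $\hz(x^*)$ with a strictly sub-unit factor — is already supplied by Corollary \ref{cor:2summ}, Lemma \ref{lem:neval} and Theorem \ref{thm:lifetime_summary}, and it is precisely this localisation that makes $\hat\ct$ a contraction of modulus $A$.
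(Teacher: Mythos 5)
Your proof is correct and follows essentially the same route as the paper: the paper's own argument checks (via \textbf{S2*} and the fixed-point property) that the optimal purchase in the problem $\hat\ct\hi^y$ occurs strictly below $x^*$ and then invokes the contraction machinery of Appendix \ref{sec:uniqueness} (Lemmas \ref{lem:contraction} and \ref{lem:geometric_convergence}) — precisely the estimate you re-derive inline, with part ii) handled identically through $y=\hi^y(x^*)=\hat V(x^*)$. The only cosmetic difference is that the paper states the contraction in the seminorm $\| f \|_\# = |f(x^*)|$ together with the pointwise bound \eqref{eqn:ct_seminorm_bound}, rather than in a $\phi_r$-weighted supremum norm, which sidesteps the boundary bookkeeping you anticipate.
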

\begin{Aproof}
i) We will appeal to Lemma \ref{lem:geometric_convergence} by refining property iii) above for the problem $\hat \ct V_2=\hat \ct \hi^y$ (as was done in the proof of Corollary \ref{cor:2summ}). Suppose that the BO buys energy at the price $x^*$. Then since the function $\hi^y$ is a fixed point of $\hat \ct$ under our assumptions, we may consider $\hat \ct \hi^y (x^*) = -x^* + p_c + K_c + \hi^y(x^*)$ and then {\bf S2*} leads to $\hat \ct \hi^y(x^*) < \hi^y(x^*)$ which is a contradiction. Thus from Lemma \ref{lem:geometric_convergence}, $\hat\ct^n \mathbf{0}$ converges to $\hi^y$ as $n\to \infty$. As the limit of $\hat\ct^n \mathbf{0}$ is the lifetime value function we obtain $\hat V = \hi^y$.

ii) Assume the existence of two such values $y_1\neq y_2$. Then \eqref{eqn:neval1} gives $\hat V(x^*) = \hi^{y_1}(x^*) = y_1 \neq y_2 = \hi^{y_2}(x^*) = \hat V(x^*)$, a contradiction.
\end{Aproof}

We recall here that, on the other hand, the value $\hat x$ in Lemma \ref{lem:neval} may not be uniquely determined (cf. part (a) of Corollary \ref{cor:2summ}). In this case the largest $\hat x$ satisfying the assumptions of Lemma \ref{lem:neval} is the highest price $\check x$ at which the BO can buy energy optimally.

\section{Complete solutions for specific EIM price models}\label{sec:examples}

The general theory presented above provides optimal stopping times for initial EIM prices $x \geq \check x$, where $\check x$ is the highest price at which the BO can buy energy optimally. In this section, for specific models of the EIM price we derive optimal stopping times for \emph{all possible} initial EIM prices $x \in I$ when the sustainability conditions {\bf S1*} and {\bf S2*} hold. Note that condition {\bf S2*} is ensured by the explicit choice of parameters. Verification of condition {\bf S1$^*$} is straightforward by checking, for example,  if the left boundary $a$ of the interval $I$ satisfies $a < p_c + \limsup_{x \to a} \frac{\psi_r(x)}{\psi_r(x^*)} K_c$, i.e., that $\limsup_{x \to a} h(x) > 0$. In particular, {\bf S1$^*$} always holds if $a = -\infty$. 

Our approach is to take a variety of specific models for the EIM price and combine the above general results with the geometric method drawn from Proposition 5.12 of \cite{Dayanik2003} (and also used in \cite{moriarty2014american}). In particular we construct the least concave majorant $W$ of the obstacle $H:[0,\infty) \to \R$, where 
\begin{equation}\label{eq:H}
H(y) := 
\begin{cases}
\frac{\hh(F^{-1}(y),\hz)}{\phi_r(F^{-1}(y))}, & y > 0,\\
\limsup_{x \to a} \frac{\hh(x,\hz)}{\phi_r(x)} =L_c, & y = 0,
\end{cases}
\end{equation}
(the latter equality was given in \eqref{eqn:lim_a_hh_h}).
Here the function $F(x)=\psi_r(x)/\phi_r(x)$ is strictly increasing with $F(a+)=0$. Writing $\hat \Gamma$ for the set on which $W $ and $H$ coincide, under appropriate conditions the smallest optimal stopping time is given by the first hitting time of the set $\Gamma:=F^{-1}(\hat \Gamma)$ \citep[Propositions 5.13--5.14]{Dayanik2003}. 

Two of the EIM price models we take are based on the Ornstein-Uhlenbeck (OU) process. This is a continuous time stochastic process with dynamics
\begin{equation}
dX_t =  \theta (\mu - X_t) dt + \sigma dW_t, \label{eq:ousde}
\end{equation}
where $\theta, \sigma > 0$ and $\mu \in \er$. It has two natural boundaries, $a=-\infty$ and $b=\infty$. This process extends the scaled Brownian motion model by introducing a mean reverting drift term $\theta (\mu - X_t)dt$, which may be taken as a consequence of the SO's corrective balancing actions. Appendix \ref{sec:oufacts} collects some useful facts about the Ornstein-Uhlenbeck process. In particular, when constructing $W$ it is convenient to note that $H''\circ F$ has the same sign as $(\mathcal{L}-r)h$, where $\mathcal{L}$ is the infinitesimal generator of $(X_t)$ defined as in Appendix \ref{sec:oufacts}.

\subsection{OU price process}\label{subsec:OU}

Assume now that the EIM price follows the OU process \eqref{eq:ousde} so that $L_c=0$ (see equation \eqref{eqn:dx_phi_limit} in Appendix \ref{sec:oufacts}) and, by Lemma \ref{cor:suff_case_A}, case A of Theorem \ref{thm:hammerA} applies. We are able to deal with the single and lifetime problems simultaneously by setting $\hz$ equal to $0$ for the single option and equal to (the positive function) $\hat V$ in the lifetime problem.  The results of Section \ref{sec:mainres}  yield that in both problems, the right endpoint of the set $\hat \Gamma$ equals $F(\check x)$ for some $\check x < x^*$. Further, since $\psi_r$ is a solution to $(\mathcal{L} - r) v = 0$ and $\check x < x^*$ we have for $x \le \check x$
\begin{align}
(\mathcal{L}-r)\hat{h}(x, \hz) &= (\mathcal{L}-r)\Big(-x + p_c+ \frac{\psi_r(x)}{\psi_r(x^*)}\big(K_c + A\hat \zeta(x^*)\big)\Big) \notag\\
&= (\mathcal{L}-r)(-x + p_c) \notag\\
&= (r+ \theta) x - r p_c - \theta\mu. \notag
\end{align}
Therefore, the function $(\mathcal{L}-r)\hh(\cdot,\hz)$ is negative on $(-\infty,B_0)$ and positive on $(B_0,\infty)$, where $B_0 = \frac{r p_c + \theta\mu}{r+\theta}$. This implies that $H$ is strictly concave on $(0, F(B_0))$ and strictly convex on $(F(B_0), \infty)$. Since the concave majorant $W$ of $H$ cannot coincide with $H$ in any point of convexity, so necessarily $\check x < B_0$ and $H$ is concave on $(0,F(\check x))$. Hence we conclude that $W$ is equal to $H$ on the latter interval and so $\Gamma = (-\infty,\check x)$.

\subsection{Shifted exponential price processes}\label{sec:shiftexp}
In order to first recover and then generalise results from \cite{moriarty2014american}, we henceforth assume the following shifted exponential model for the price process:
\begin{eqnarray}
f(z) &:=& D + d e^{bz}, \label{eq:fz}\\
X_t&=&f(Z_t), \label{eq:stack}
\end{eqnarray}
where $Z$ is a regular one-dimensional diffusion with natural boundaries $a^Z$ and $b^Z$ (we will use the superscripts $X$ and $Z$ where necessary to emphasise the dependence on the stochastic process). The idea is that $Z$ models the physical system imbalance process while $f$ represents a {\em price stack} of bids and offers which is used to form the EIM price. In this case  the left boundary for $X$ is $a=f(a^Z) \ge D$ and, 
by Lemma \ref{cor:suff_case_A}, $L_c=0$ and case A of Theorem \ref{thm:hammerA} applies. Rather than working with the implicitly defined process $X$, however, we may work directly with the process $Z$ by setting:
\begin{eqnarray}
z^*&:=&f^{-1}(x^*), \label{eq:z1}\\
h_f(z)&:=&-f(z) + p_c + \begin{cases}
\frac{\psi_r^Z(z)}{\psi_r^Z(z^*)} K_c, & z < z^*,\\                        
 K_c, & z \ge z^*,
                        \end{cases}
\label{eq:hf}\\
\hhf(z,\hz)&:=& 
\begin{cases}
-f(z) + p_c + \frac{\psi_r^Z(z)}{\psi_r^Z(z^*)} \big(K_c + A\hat \zeta(z^*)\big), & z < z^*,\\
-f(z) + p_c + K_c + A\hat \zeta(z), & z \ge z^*,
\end{cases}\label{eq:hhf}
\end{eqnarray}
and modifying the definitions for $\ct, \hat \ct, V_c$ and $\hat V$ accordingly.
We then have
\begin{Theorem}\label{thm:hammerA'} Taking definitions \eqref{eq:fz} and \eqref{eq:z1}--\eqref{eq:hhf}, 
assume that conditions {\bf S1*} and {\bf S2*} hold. Then 
\[
L_c:=\limsup_{z \to a^Z} \frac{-f(z)}{\phi_r^Z(z)} 
=0.
\] 
Also:
\begin{itemize}
\item[i)] (Single option) There exists $\hat z < z^*$ that maximises $\frac{h_f(z)}{\phi_r^Z(z)}$, the stopping time $\tau_{\hat z}$ is optimal for $z \ge \hat z$, and
\[
V_c(z) = \phi_r^Z(z) \frac{h_f(\hat z)}{\phi_r^Z(\hat z)}, \qquad z \ge \hat z.
\]
\item[ii)] (Lifetime problem) The lifetime value function $\hat V$ is continuous and a fixed point of $\hat\ct$. There exists $\tilde z \in (\hat z, z^*)$ which maximises $\frac{\hat h(z, \hat V)}{\phi_r^Z(z)}$ and $\tau_{\tilde z}$ is an optimal stopping time for $z\geq \tilde z$ with
\[
\hat V(z) = \hat \ct \hat V(z) = \phi_r^Z(z) \frac{\hh(\tilde z,\hat V)}{\phi_r^Z(\tilde z)}, \qquad z \ge \tilde z.
\]
\end{itemize}
\end{Theorem}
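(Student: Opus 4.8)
The plan is to read parts i) and ii) off the general results of Section \ref{sec:mainres} applied to the process $Z$ (with $\phi_r^Z,\psi_r^Z,h_f,\hhf$ replacing $\phi_r,\psi_r,h,\hh$), the only genuinely new point being the strict inequality $\tl z>\hat z$. First I would dispose of $L_c=0$. As $z\to a^Z$ the price $f(z)=D+de^{bz}$ converges to the finite boundary value $f(a^Z)=a\ge D$, so $-f(z)$ stays bounded, while $\phi_r^Z(a^Z+)=\infty$ because $a^Z$ is natural; hence $\limsup_{z\to a^Z}\frac{-f(z)}{\phi_r^Z(z)}=0$. Together with \textbf{S1*} (which gives $\frac{h_f(z)}{\phi_r^Z(z)}>0=L_c$ for some $z$), Lemma \ref{cor:suff_case_A}(1) places the single option problem in Case A of Theorem \ref{thm:hammerA}.

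For part i) I would invoke Theorem \ref{thm:hammerA}(A) directly. Since $f$ is strictly increasing, for $z\ge z^*$ we have $f(z)\ge x^*$ and hence $h_f(z)=-f(z)+p_c+K_c\le -x^*+p_c+K_c<0$ by \textbf{S2*}; thus the (positive) supremum of $h_f/\phi_r^Z$ is attained at an interior point $\hat z\in(a^Z,z^*)$, $\tau_{\hat z}$ is optimal for $z\ge\hat z$, and the stated value formula follows.

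For part ii), since the single option problem is in Case A, the Case 1 analysis in the proof of Theorem \ref{thm:regimes} shows that regime ($\alpha$) holds. Theorem \ref{thm:lifetime_summary} and Lemma \ref{lem:perp_profit} then give that $\hat V$ is continuous, strictly positive, bounded and a fixed point of $\hat\ct$, and that $\hat V/\phi_r^Z$, being a limit of the constants $\hz_n/\phi_r^Z$, is constant (hence non-increasing) on $[z^*,b^Z)$; thus $\hat V$ is admissible. I would then apply Lemma \ref{lem:char} with $\hz=\hat V$: as $L_c=0$ its Case 1 applies, producing a maximiser $\tl z\le z^*$ of $\frac{\hh(\cdot,\hat V)}{\phi_r^Z}$, the optimality of $\tau_{\tl z}$ for $z\ge\tl z$, and the formula $\hat V(z)=\hat\ct\hat V(z)=\phi_r^Z(z)\frac{\hh(\tl z,\hat V)}{\phi_r^Z(\tl z)}$ for $z\ge\tl z$ (using that $\hat V$ is a fixed point). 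The strict bound $\tl z<z^*$ comes from Corollary \ref{cor:2summ}: buying at $x^*$ would repeat a loss-making cycle by \textbf{S2*} and force $\hat V(z^*)=-\infty$, contradicting $\hat V>0$.

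The heart of the matter, and the only place the shifted-exponential structure is used, is the strict inequality $\tl z>\hat z$. For this I would use the decomposition \eqref{eq:hmh}: for $z<z^*$,
\[
\frac{\hh(z,\hat V)}{\phi_r^Z(z)}=\frac{h_f(z)}{\phi_r^Z(z)}+c\,F(z),\qquad c:=\frac{A\,\hat V(z^*)}{\psi_r^Z(z^*)}>0,\quad F:=\frac{\psi_r^Z}{\phi_r^Z}\ \text{strictly increasing.}
\]
The functions $\psi_r^Z,\phi_r^Z$ are continuously differentiable on $I^Z$ under the standing assumption \eqref{LI}, and $\hat z$ is an interior maximiser of the smooth map $h_f/\phi_r^Z$, so $(h_f/\phi_r^Z)'(\hat z)=0$ and the derivative of $z\mapsto\hh(z,\hat V)/\phi_r^Z(z)$ at $\hat z$ equals $c\,F'(\hat z)>0$. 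Hence some $z_1>\hat z$ gives a strictly larger value, whereas the estimate used in the Case 1 argument of Lemma \ref{lem:char} gives $\frac{\hh(z,\hat V)}{\phi_r^Z(z)}<\frac{\hh(\hat z,\hat V)}{\phi_r^Z(\hat z)}$ for every $z<\hat z$ (because $h_f/\phi_r^Z\le(h_f/\phi_r^Z)(\hat z)$ and $F(z)<F(\hat z)$). No point $z\le\hat z$ can then be the maximiser, so $\tl z>\hat z$, completing $\tl z\in(\hat z,z^*)$. Equivalently, in the geometric picture of \eqref{eq:H} the lifetime obstacle is the single-option obstacle plus the strictly increasing term $c\,y$ on $\{y<F(z^*)\}$, which tilts its argmax strictly rightward; this quantitative comparison of the two obstacles near $\hat z$ is the crux.
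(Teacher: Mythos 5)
Your proof is correct and, at the level of strategy, matches the paper's: the published proof of Theorem \ref{thm:hammerA'} is a one-line ``direct transfer'' of Theorems \ref{thm:hammerA} and \ref{thm:lifetime_summary} through the bijection $x=f(z)$, which is exactly the skeleton you follow for $L_c=0$, part i), and most of part ii). Where you genuinely add something is the strict inequality $\tilde z>\hat z$: the general machinery only yields the weak comparison $\hx_0\le\hx$ (Lemma \ref{lem:char}, case 1) and the bound $\check x<x^*$ (Corollary \ref{cor:2summ}), so the claim that $\tilde z$ lies in the \emph{open} interval $(\hat z,z^*)$ is not literally covered by ``direct transfer''. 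Your argument -- write the lifetime obstacle as the single-option obstacle plus the strictly increasing perturbation $c\,F$ with $c=A\hat V(z^*)/\psi_r^Z(z^*)>0$, note that the first-order condition at the interior maximiser $\hat z$ kills the first term's derivative so the perturbed obstacle has strictly positive slope $c\,F'(\hat z)>0$ there, and combine this with the strict domination for $z<\hat z$ -- is the right way to close that gap, and is a worthwhile supplement to the paper's proof. Two small caveats: the first-order condition requires $\psi_r^Z,\phi_r^Z$ (hence $h_f/\phi_r^Z$) to be $C^1$ near $\hat z$, which holds under \eqref{LI} and certainly for the Brownian and OU imbalance processes of Sections \ref{sec:bmi}--\ref{sec:oui} but deserves a sentence; and $F'(\hat z)>0$ should be justified via the positivity of the Wronskian $\psi_r'\phi_r-\psi_r\phi_r'$ rather than merely from $F$ being increasing. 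Neither affects the validity of the argument.
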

\begin{Aproof}
The proof follows from the one-to-one correspondence between the process $X$ and the process $Z$,  and direct transfer from Theorems \ref{thm:hammerA} and \ref{thm:lifetime_summary}.
\end{Aproof}

In some cases, explicit necessary and/or sufficient conditions for \textbf{S1$^*$} may be given in terms of the problem parameters. Assume that $a^Z = -\infty$ as in the examples studied below. If $p_c > D$ and $K_c \geq 0$, this is sufficient for the condition \textbf{S1$^*$} to be satisfied as then $h_f(z) \geq -f(z) + p_c > 0$ for sufficiently small $z$. When $p_c = D$ and $K_c > 0$, it is sufficient to verify that $e^{bz} = o \big(\psi_r^Z(z)\big)$ as $z \to -\infty$ since then $h_f(z) = -de^{bz} + \psi_r^Z(z)\, K_c / \psi_r^Z(z^*)$ for $z < z^*$. On the other hand, our assumption that \textbf{S1$^*$} holds necessarily excludes parameter combinations with $p_c - D=K_c = 0$, since the option writer then cannot make any profit because $h_f(z) \le 0$ for all $z$.

In Section \ref{sec:bmi} we take $Z$ to be the standard Brownian motion and recover results from the single option case of \cite{moriarty2014american} (the lifetime problem is formulated differently in the latter reference, where degradation of the store is not modelled). In Section \ref{sec:oui} we generalise to the case when $Z$ is an OU process.

\subsubsection{Brownian motion imbalance process}\label{sec:bmi}

When the imbalance process $Z=W$, the Brownian motion, we have
\[
(\mathcal{L}-r)\hhf(z,\hz) = (\mathcal{L}-r)(-f(z) + p_c) = de^{bz}\left\{r - \frac12 b^2\right\} + r(D - p_c).
\]
We have several cases depending on the sign of $(D-p_c)$ and $(r - \frac12 b^2)$. 
\begin{enumerate}
\item Assume first that $r > \frac12 b^2$. 
\begin{enumerate}
\item[(i)] We may exclude the subcase $p_c \le D$, since then $H(y) = \frac{\hat{h}(z, \hz)}{\phi_r^Z(z)} |_{z = (F^Z)^{-1}(y)}$ is strictly convex on $(0, F^Z(z^*))$ for any $\hz$ and $\Gamma$ cannot intersect this interval, contradicting Theorem \ref{thm:hammerA'} and, consequently, violating \textbf{S1$^*$} or \textbf{S2$^*$}.
\item[(ii)] If $p_c > D$, $H$ is concave on $(0, F^Z(B))$ and convex on $(F^Z(B), \infty)$, where
\[
B = \frac{1}{b} \log \left(\frac{r (p_c - D)}{d (r - \frac12 b^2)} \right).
\]
By Theorem \ref{thm:hammerA'} and the positivity of $H$ on $(0, F^Z(\hat z))$ we have $\Gamma = (-\infty, \hat z]$ and $\Gamma = (-\infty, \tilde z]$ for the single and lifetime problems respectively, with $\tilde z < \hat z < B$.
\end{enumerate}
\item Suppose that $r < \frac12 b^2$.
\begin{enumerate}
\item[(i)] When $p_c \ge D$, the function $H$ is concave on $(0, \infty)$. Hence the stopping sets $\Gamma$ for single and lifetime problems have the same form as in case 1(ii) above.
\item[(ii)] If $p_c < D$, the function $H$ is convex on $(0, F^Z(B))$ and concave on $(F^Z(B), \infty)$. The set $\Gamma$ must then be an interval, respectively $[\hat z_0, \hat z]$ and $[\tilde z_0, \tilde z]$. For explicit expressions for the left and right endpoints for the single option problem, as well as sufficient conditions for \textbf{S1$^*$}, the reader is refered to \cite{moriarty2014american}.
\end{enumerate}
\item In the boundary case $r = \frac12 b^2$, the convexity of $H$ is determined by the sign of the difference $D - p_c$. As above the possibility $D > p_c$ is excluded since then $H$ is strictly convex. Otherwise $H$ is concave and the stopping sets $\Gamma$ have the same form as in case 1(ii) above.
\end{enumerate}

\subsubsection{OU imbalance process}\label{sec:oui}

When $Z$ is the Ornstein-Uhlenbeck process, by adjusting $d$ and $b$ in the price stack function $f$ (see \eqref{eq:fz}) we can restrict our analysis to the OU process with zero mean and unit volatility, that is:
\[
dZ_t = -\theta Z_t dt + dW_t.
\]
Then for $z<z^*$
\begin{eqnarray}
(\mathcal{L}-r)\hhf(z,\hz) &=& (\mathcal{L}-r)(-f(z) + p_c) \\
&=& de^{bz}\left\{b\left(\theta z - \frac12 b \right) + r \right\} + r(D - p_c) =: \eta(z).\label{eqn:OUstack}
\end{eqnarray}
Differentiating $\eta$ we obtain
\[
\eta'(z) = db\theta e^{bz} \left( bz + 1 + \frac{r - \frac12 b^2}{\theta} \right)
\]
which has a unique root at $z^\diamond = \frac{1}{b} \Big( \frac{\frac12 b^2 - r}{\theta} - 1 \Big)$. The function $\eta$ decreases from $r(D-p_c)$ at $-\infty$ until $\eta(z^\diamond) = -de^{bz^\diamond} \theta + r(D-p_c)$ at  $z^\diamond$ and then increases to positive infinity.

\begin{enumerate}
\item If $p_c \ge D$ then the function $\eta$ is negative on $(-\infty,u)$, where $u$ is the unique root of $\eta$. Hence $H$ is concave on $(0, F^Z(u))$ and convex on $(F^Z(u), \infty)$. The stopping sets $\Gamma$ for the single and lifetime problems must then be of the form $(-\infty, \hat z]$ and $(-\infty, \tilde z]$, respectively, c.f. case 1(ii) in Subsection \ref{sec:bmi}.
\item The case $p_c < D$ is more complex. 
\begin{itemize}
\item [(i)] Let $z^\diamond \ge z^*$. We exclude the possibility $\eta(z^*) \ge 0$, since then the function $H$ is convex on $(0, F^Z(z^*))$ and the set $\Gamma$  has empty intersection with this interval, contradicting Theorem \ref{thm:hammerA'} and, consequently, violating \textbf{S1$^*$} or \textbf{S2$^*$}. When $\eta(z^*) < 0$, $H$ is convex on $(0, F^Z(u))$ and concave on $(F^Z(u), F^Z(z^*))$, where $u$ is the unique root of $\eta$ on $(0, z^*)$. Therefore the stopping sets $\Gamma$ for the single and lifetime problems are of the form $[\hat z_0, \hat z]$ and $[\tilde z_0, \tilde z]$, respectively, with $\min(\hat z_0,\tilde z_0) > u$, c.f. case 2(ii) in Subsection \ref{sec:bmi}.
\item[(ii)] Consider now $z^\diamond < z^*$. As above we exclude the case $\eta(z^\diamond) \ge 0$, since then $H$ is convex on $(0, F^Z(z^*))$. The remaining case $\eta(z^\diamond) < 0$ implies that the stopping sets $\Gamma$ have the same form as in case 2(i) above, as $H$ is convex and then concave if $\eta(z^*) \le 0$, and convex-concave-convex if $\eta(z^*)> 0$.
\end{itemize}
\end{enumerate}

\vspace{1mm}

\section{Empirical illustration and qualitative implications}\label{sec:empirics}

In this section we present an empirical illustration of the above results and draw qualitative implications from our work. The strength of the present paper is in providing a framework capable of accommodating advanced EIM price models through Theorem \ref{thm:hammerA}, and, for shifted exponential price models, through Theorem \ref{thm:hammerA'}. Nevertheless the detailed modelling of EIM prices is beyond the scope of this paper and we assume the OU model of Section \ref{subsec:OU},  which captures both the mean reversion and random variability present in the EIM prices, fitting this model to relevant data in Section \ref{sec:oufit}. We take an interest rate of $3\%$ per annum and the degradation factor for the store to be $A=0.9999$.

\subsection{Imbalance market data and fitted OU model}\label{sec:oufit}
Our data is the `balancing group price' from the German Amprion SO, which is available for every 15 minute period \citep{amprion2016}. Summary statistics for the average daily price from 1 June 2012 to 31 May 2016 are presented in Table \ref{tab:ss1}. To address the issue of its extreme range, which impacts the fitting of both volatility and mean reversion in the OU model, the data was truncated at the values -150 and 150. The parameters obtained by maximum likelihood fitting were then $\theta = 68.69$, $\sigma = 483.33$, $D = 30.99$. (The effect of the truncation step was to approximately halve the fitted volatility.)

\begin{table}[htp]
\caption{Summary statistics for the daily average balancing group price in the German Amprion area,  1 June 2012 to 31 May 2016.
}
\begin{center}
\begin{tabular}{|c|c|c|c|c|c|}
\hline
Min. & 1st Qu. &  Median &    Mean & 3rd Qu. &    Max. \\
\hline
-6002.00  &   0.27 &   33.05 &   31.14  &  66.97 & 6344.00 \\
\hline
\end{tabular}
\label{tab:ss1}
\end{center}
\label{default}
\end{table}

\subsection{Illustration and implications}

\begin{figure}[tb]
\begin{center}
\includegraphics[width=0.45\textwidth]{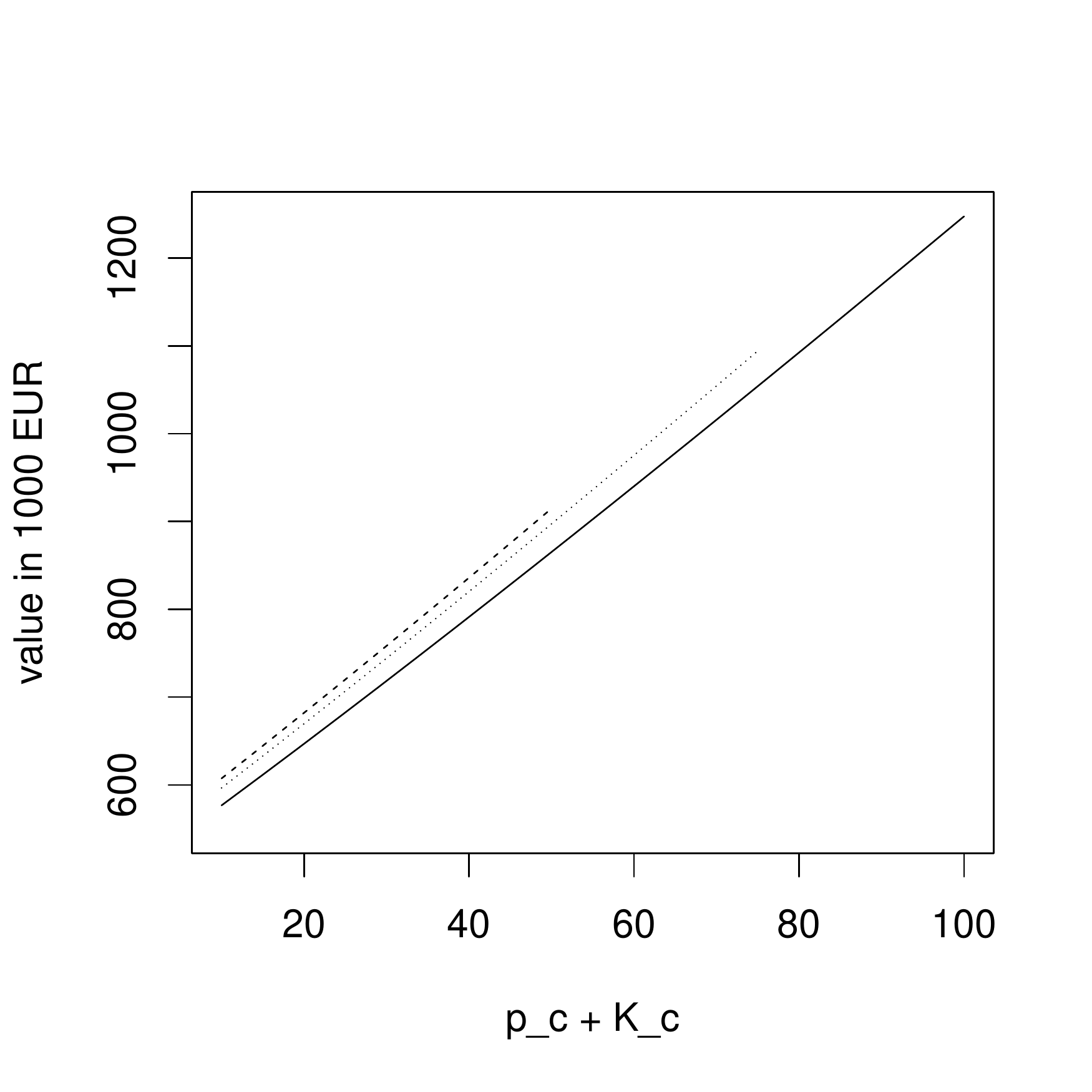}
\qquad
\includegraphics[width=0.45\textwidth]{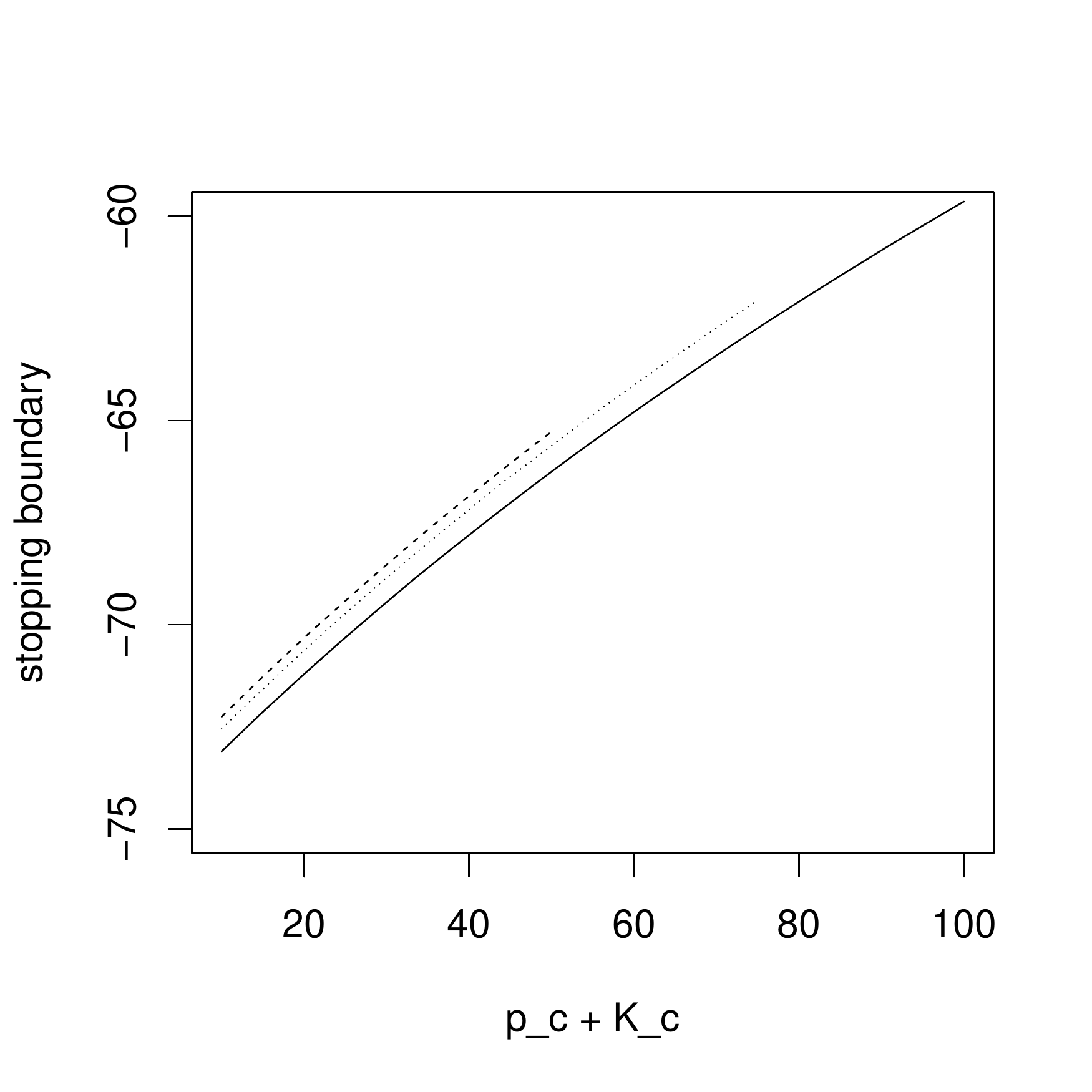}
\caption{Results obtained with the Ornstein-Uhlenbeck model fitted in Section \ref{sec:oufit}, as functions of the total premium, with interest rate $3\%$ per annum. Solid lines: $x^* = 100$, dotted: $x^* = 75$, dashed: $x^*=50$. Left: lifetime value $\hat V(x^*)$. Right: the stopping boundary $\check x$, the maximum price for which BO can buy energy optimally. }
\label{fig:singleplot}
\end{center}
\end{figure}

\begin{figure}[tb]
\begin{center}
\includegraphics[width=0.5\textwidth]{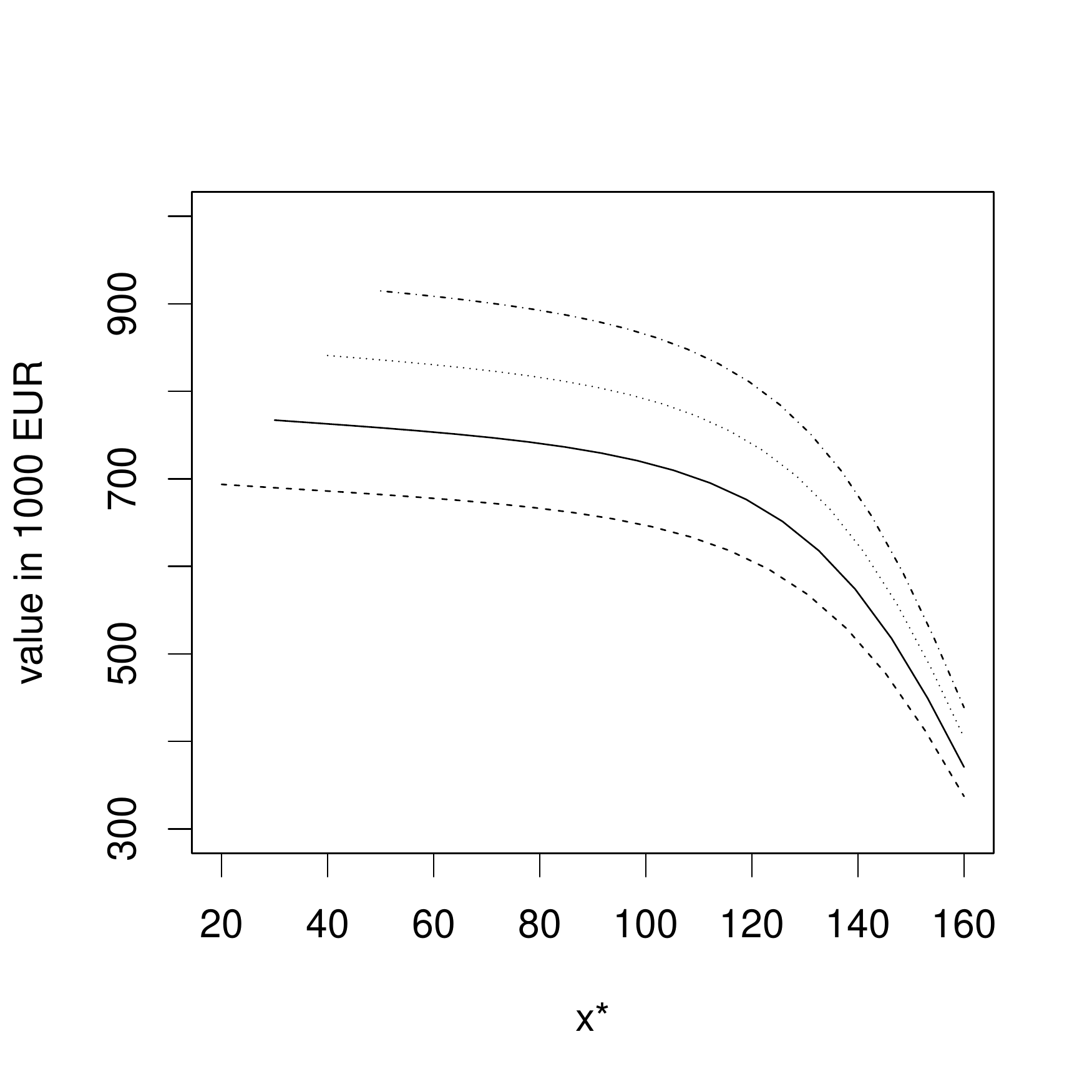}
\end{center}
\caption{Lifetime value $\hat V(x^*)$ as a function of $x^*$ with the Ornstein-Uhlenbeck model fitted in Section \ref{sec:oufit}, with interest rate $3\%$ per annum. Dashed line: $p_c + K_c = 20$, solid: $p_c + K_c = 30$, dotted: $p_c + K_c = 40$, mixed: $p_c + K_c = 50$.}
\label{fig:value_premium}
\end{figure}

The left panel of Figure \ref{fig:singleplot}, and Figure \ref{fig:value_premium}, show
 the lifetime value $\hat V(x^*)$, while the right panel of Figure \ref{fig:singleplot} plots the stopping boundary $\check x$, which is the maximum price at which the BO can buy energy optimally. These values of $\check x$ are significantly below the long-term mean price $D$, indeed the former value is negative while the latter is positive. Thus in this example the BO purchases energy when it is in excess supply, further contributing to balancing. To place the negative values on the stopping boundary in Figure \ref{fig:singleplot} in the statistical context, recall from Table \ref{tab:ss1} that the first quartile of the price distribution is approximately zero. Indeed negative energy prices usually occur several times per day in the German EIM market. In the present dataset of $1461$ days there are only $11$ days without negative prices and the longest observed time between negative prices is $41.5$ hours.

We make two empirical observations. Firstly, defining the total premium as the sum $p_c + K_c$, altering its distribution between the initial premium $p_c$ (which is received at $x=\check x$) and the utilisation payment $K_c$ (which is received at $x=x^*$) results in insignificant changes to the graphs, with relative differences on the vertical axes of the order $10^{-3}$ (data not shown). It is for this reason that the figures are indexed by the total premium $p_c+K_c$ rather than by individual premia. Secondly, the contours in Figure \ref{fig:value_premium} have a `hockey stick' shape, the marginal influence of $x^*$ being smaller in the range $x^*<110$ and larger for greater values of $x^*$.

These two phenomena are explained by the presence of mean reversion in the OU price model. The timings of the cashflows to the BO are entirely determined by the successive {\em passage times} of the price process between the levels $x^*$ and $\check x$. These passage times are relatively short on average for the fitted OU model. This means that the premia are received at almost the same time under each option contract, and it is the total premium which drives the real option value. Further the passage times between $x^*$ and $\check x$ may be decomposed into passage times between $x^*$ and $D$, and between $D$ and $\check x$. Since the OU process is statistically symmetric about $D$, let us  compare the distances $|\check x-D|$ and $|x^*-D|$. From Figure \ref{fig:singleplot} we have $\check x \approx -70$ so that $|\check x-D| \approx 100$. Therefore for $x^*<110$ we have $|x^*-D| \ll |\check x-D|$ and the passage time between $D$ and $\check x$, which varies little, dominates that between $x^*$ and $D$. Correspondingly we observe in  Figure \ref{fig:value_premium} that the value function changes relatively little as $x^*$ varies below $110$. Conversely, as $x^*$ increases beyond $110$ it is the distance between $x^*$ and $D$ which dominates, and the value function begins to decrease relatively rapidly. 

These results provide insights into the suitability of the present contract structure for correcting differing levels of imbalance. As the distance between $x^*$ and the mean level $D$ grows, the energy price reaches $x^*$ significantly less frequently and the option starts to provide insurance against rare events, resulting in infrequent option exercise and low power flow through the battery. These observations suggest that the American option type contract studied in this paper is more suitable for the frequent balancing of less severe imbalance. In contrast, 
the more rapid reduction in the lifetime value for large values of $x^*$ suggests that these options and, more generally, market based arrangements are not suitable for balancing relatively rare events such as large system disturbances due to unplanned outages of large generators. The SO may prefer to use alternative arrangements, based for example on fixed availability payments, to provide security against such events.

\section{Summary}\label{sec:conclusion}

In this paper we investigate the procurement of operating reserve from energy-limited storage using a sequence of physically covered American style call options. In order to perform real options analysis we take a general linear regular diffusion model of an energy imbalance market price.  In particular our methodology is capable of modelling the mean reversion present in imbalance prices, and we have also taken account of multiplicative degradation in the capacity of the store. Both the optimal operational policy and the real option value of the store are characterised explicitly, for both a single option and the lifetime problem (an infinite sequence of options traded back-to-back). Although the solutions are generally not available in an analytical form we have provided a straightforward procedure for their numerical evaluation together with empirical examples from the German energy imbalance market.

The results of the lifetime analysis in particular have both managerial implications for the BO and policy implications for the SO. From the operational viewpoint, under the setup described in Section \ref{sec:setup} we have established that the BO should purchase energy as soon as the EIM price falls to the level $\check x$, which may be calculated as described in Section \ref{sec:lifetime_constr}. Further the BO should then sell the call option immediately. Our real options valuation may be taken into account when deciding whether to invest in an energy store, and whether to offer such options in preference to trading in other markets (for example, performing price arbitrage in the spot generation market).

Turning to the perspective of the SO, we have demonstrated an American option type contract structure with physical cover, which may be seen as preferable to auctions or long-term bilateral contracts for procuring balancing reserve from energy-limited resources such as batteries. 
Our analysis shows that this contractual arrangement can be mutually beneficial to the SO and BO. More precisely, the SO can be protected against guaranteed financial losses
from the option purchase while the BO has a quantifiable profit. The analysis also provides information on feedback due to battery charging by determining  the highest price $\check x$ at which the BO buys energy, hence identifying conditions under which the BO's operational strategy is aligned with system stability. We have argued that the American option style contract is more suited to the frequent balancing of less severe imbalance, because in such conditions it is exercised sufficiently often to justify the use of a two-part (availability and utilisation) payment structure.
For extreme events a contract with a continuous availability payment appears to be more suitable and such frameworks are already in use by system operators.

We address call options, which are particularly valuable to the SO when the margin of electricity generation capacity over peak demand is low. Put options may also be studied in the above framework, although in the put case the second stopping time (action A2) is non-trivial which leads to a nested stopping problem beyond the scope of the present paper. Further we assume that the energy storage unit is dedicated to providing EIM call options, so that the opportunity costs of not operating in other markets or providing other services are not modelled. The extension to a finite expiry time, the lifetime analysis of the put option, and also the opportunity cost of not operating in other markets would be interesting areas for further work.

The methodological advances of this paper reach beyond energy markets. In particular they are relevant to real options analyses of storable commodities where the timing problem over the lifetime of the store is of primary interest. The novel lifetime analysis via optimal stopping techniques, developed in Section \ref{sec:mainres}, provides an example of how timing problems can be addressed for rather general dynamics of the underlying stochastic process. In this context we provide an alternative method to quasi-variational inequalities, which are often dynamics-specific and technically more involved.

\medskip

{\bf Acknowledgement.} JM gratefully acknowledges support by grant EP/K00557X/1 from the UK Engineering and Physical Sciences Research Council. JP was supported in part by MNiSzW grant UMO-2012/07/B/ST1/03298.

\bibliographystyle{elsarticle-harv.bst}
\bibliography{OR}

\begin{appendices}

\setcounter{table}{0}
\setcounter{figure}{0}
\setcounter{equation}{0}
\counterwithin*{figure}{section}
\counterwithin*{table}{section}
\counterwithin*{equation}{section}
\renewcommand{\thetable}{{\thesection}.\arabic{table}}
\renewcommand{\thefigure}{{\thesection}.\arabic{figure}}
\renewcommand{\theequation}{{\thesection}.\arabic{equation}}

\section{Uniqueness of fixed points}
\label{sec:uniqueness}

The following lemmas establish the uniqueness of the fixed point of $\hat \ct$ and the exponential speed of convergence of $\hat\ct^n \mathbf{0}$ to $\hat \zeta$. 

\begin{Lemma} \label{lem:contraction}
Let $\xi, \xi'$ be two continuous non-negative functions with $\xi$ satisfying the assumptions of Lemma \ref{lem:char} together with the bound $\xi \ge \xi'$. In the problem $\hat \ct \xi$, assume the existence of an optimal stopping time $\tau^*$ under which stopping occurs only at values bounded above by $x' < x^*$. Then
\[
\| \hat \ct \xi - \hat \ct \xi' \|_\# \le \rho \| \xi - \xi' \|_\#,
\]
where $\rho = A \frac{\psi_r(x')}{\psi_r(x^*)} < 1$ and $\| f \|_\# = |f(x^*)|$ is a seminorm on the space of continuous functions. Moreover,
\begin{equation}\label{eqn:ct_seminorm_bound}
0 \le \hat \ct \xi(x) - \hat \ct \xi' (x) < \| \xi - \xi' \|_\#.
\end{equation}
\end{Lemma}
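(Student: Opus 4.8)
The plan is to exploit the structure established in Lemma~\ref{lem:char}, namely that under the stated hypotheses the value function $\hat\ct\xi$ is attained by a threshold stopping time $\tau_{\hat x}$ with $\hat x \le x' < x^*$, and that the payoff $\hat h(\cdot,\xi)$ depends on the continuation value $\xi$ only through the scaled term $A\hat\zeta(\cdot)$ appearing in \eqref{eq:hhat}. First I would evaluate the seminorm at the single point $x^*$ by using the optimality of $\tau_{\hat x}$ in the problem $\hat\ct\xi$ together with the suboptimality of that same $\tau_{\hat x}$ in the problem $\hat\ct\xi'$ (where $\xi' \le \xi$, so the payoff is pointwise smaller). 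This two-sided comparison — optimal for the larger function, merely admissible for the smaller — is the standard device for obtaining Lipschitz-type bounds for value operators, and it immediately gives the lower bound $0 \le \hat\ct\xi - \hat\ct\xi'$ since increasing the continuation value cannot decrease the value function (monotonicity of $\hat\ct$, already used in Lemma~\ref{lem:perp_profit}).

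For the contractive upper bound I would track precisely how a difference in continuation value propagates through one application of $\hat\ct$. Under $\mathbb{P}^{x^*}$, the optimal strategy $\tau_{\hat x}$ first sends the price down to $\hat x \le x'$, and the continuation value $A\hat\zeta(\cdot)$ is only collected at the subsequent exercise time $\hat\tau_e$ when the price returns to $x^*$. The key computation is therefore to express
\[
\hat\ct\xi(x^*) - \hat\ct\xi'(x^*) \le A\,\ee^{x^*}\{ e^{-r\tau_{\hat x}} \} \cdot \ee^{\hat x}\{ e^{-r\hat\tau_e} \} \cdot \big(\xi(x^*) - \xi'(x^*)\big),
\]
where the product of discount factors along the round trip from $x^*$ down to $\hat x$ and back up to $x^*$ is, by \eqref{eqn:discount_stop}, exactly $\frac{\phi_r(x^*)}{\phi_r(\hat x)}\cdot\frac{\psi_r(\hat x)}{\psi_r(x^*)}$. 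I would then bound this using $\hat x \le x'$: since $\psi_r$ is increasing and $\phi_r$ is decreasing, the factor $\frac{\phi_r(x^*)}{\phi_r(\hat x)} \le 1$ and $\frac{\psi_r(\hat x)}{\psi_r(x^*)} \le \frac{\psi_r(x')}{\psi_r(x^*)}$, yielding the contraction constant $\rho = A\frac{\psi_r(x')}{\psi_r(x^*)}$. The inequality $\rho < 1$ follows from $A \in (0,1)$ and $\frac{\psi_r(x')}{\psi_r(x^*)} \le 1$ for $x' < x^*$.

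The main obstacle I anticipate is not the algebra but justifying that the difference is genuinely confined to the continuation term collected at $x^*$. One must argue carefully that the same stopping time $\tau_{\hat x}$ (optimal for the larger $\xi$) can be used as a benchmark for $\xi'$, and that along this path the only place the two payoffs differ is in the factor $A\hat\zeta(X_{\hat\tau_e})$ evaluated at the return level $x^*$ — which relies on the fact that $\xi$ is admissible so that $\hat\zeta/\phi_r$ is non-increasing on $[x^*,b)$, ensuring the continuation value is indeed collected at $x^*$ rather than above it. Finally, to obtain the strict inequality \eqref{eqn:ct_seminorm_bound} for all $x$ (not merely the seminorm bound at $x^*$), I would use that for $x \ge \hat x$ the value functions are of the form $\phi_r(x)\cdot c$ with constants differing by exactly $\frac{\hat\ct\xi(x^*)-\hat\ct\xi'(x^*)}{\phi_r(x^*)}$, and for $x < \hat x$ one reaches $\hat x$ first with a discount factor strictly less than one, strictly shrinking the gap; combining with $\rho<1$ and $\|\xi-\xi'\|_\# = |\xi(x^*)-\xi'(x^*)|$ gives the claimed strict bound.
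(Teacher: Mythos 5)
Your core argument is essentially the paper's: the lower bound comes from monotonicity of $\hat\ct$, and the upper bound comes from using the stopping time that is optimal for the larger continuation value $\xi$ as a merely admissible candidate for $\xi'$, observing that since stopping occurs only at prices below $x^*$ the two payoffs differ exactly by $A\big(\xi(x^*)-\xi'(x^*)\big)\frac{\psi_r(X_{\tau^*})}{\psi_r(x^*)}$, and then bounding the round-trip discount factor $\frac{\phi_r(x^*)}{\phi_r(\hat x)}\cdot\frac{\psi_r(\hat x)}{\psi_r(x^*)}$ by $\frac{\psi_r(x')}{\psi_r(x^*)}$. This is precisely the computation in the paper.

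The one step that goes astray is your derivation of the pointwise strict inequality \eqref{eqn:ct_seminorm_bound}. You invoke that ``for $x \ge \hat x$ the value functions are of the form $\phi_r(x)\cdot c$ with constants differing by exactly $\frac{\hat\ct\xi(x^*)-\hat\ct\xi'(x^*)}{\phi_r(x^*)}$.'' That structure is available for $\hat\ct\xi$ (via Lemma \ref{lem:char}, since $\xi$ is assumed admissible), but not for $\hat\ct\xi'$: the lemma only assumes $\xi'$ is continuous and non-negative, so $\hat\ct\xi'$ need not be proportional to $\phi_r$ on $[\hat x, b)$, nor need its optimal threshold (if one exists) coincide with $\hat x$. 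Likewise, for $x<\hat x$ the process does not ``reach $\hat x$ first'' --- if $x$ lies in the stopping set one stops immediately with no discount, and the gap is then controlled by $\frac{\psi_r(x)}{\psi_r(x^*)}<1$ rather than by a discount factor. The repair is immediate and is what the paper does: run your two-sided comparison at an \emph{arbitrary} initial $x$ with the same $\tau^*$, which gives
\[
0 \le \hat\ct\xi(x)-\hat\ct\xi'(x) \le \|\xi-\xi'\|_\#\, A\, \ee^x\Big\{e^{-r\tau^*}\tfrac{\psi_r(X_{\tau^*})}{\psi_r(x^*)}\ind{\tau^*<\infty}\Big\} < \|\xi-\xi'\|_\#,
\]
the final strict inequality holding because $A<1$ and $\psi_r(X_{\tau^*})\le\psi_r(x')<\psi_r(x^*)$; specialising to $x=x^*$ and using $\tau^*\ge\tau_{x'}$ under $\mathbb{P}^{x^*}$ then yields the contraction constant $\rho$. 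With that substitution your proof matches the paper's.
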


Note that in general, an optimal stopping time for $\hat\ct \xi(x)$ depends on the initial state $x$. However, under general conditions (cf. Section \ref{sec:osmethod}), $\tau^* = \inf\{t \ge 0:\ X_t \in \Gamma \}$, where $\Gamma$ is the stopping set. Then the condition in the above lemma writes as $\Gamma \subset (a, x']$ for some $x' < x^*$.
\begin{proof}{Proof of Lemma \ref{lem:contraction}}
By the monotonicity of $\hat \ct$, for any $x$ we have
\begin{align*}
0 \le \hat \ct \xi(x) - \hat \ct \xi' (x)
& \le \ee^x \Big\{e^{-r\tau^*} \Big(-X_{\tau^*} + p_c + \big( K_c + A\xi(x^*) \big) \frac{\psi_r(X_{\tau^*})}{\psi_r(x^*)}\Big) \Big\}\\
&\hspace{11pt} - \ee^x \Big\{e^{-r\tau^*} \Big(-X_{\tau^*} + p_c + \big( K_c + A\xi'(x^*) \big) \frac{\psi_r(X_{\tau^*})}{\psi_r(x^*)}\Big) \Big\}\\
& = \ee^x \Big\{e^{-r\tau^*} A \Big( \big(\xi(x^*) - \xi'(x^*) \big) \frac{\psi_r(X_{\tau^*})}{\psi_r(x^*)}\Big) \Big\}\\
&= \| \xi - \xi' \|_\# A \, \ee^x \Big\{e^{-r\tau^*} \frac{\psi_r(X_{\tau^*})}{\psi_r(x^*)} \Big\}.
\end{align*}
This proves \eqref{eqn:ct_seminorm_bound}. Also we have
\[
A\, \ee^{x^*} \Big\{e^{-r\tau^*} \frac{\psi_r(X_{\tau^*})}{\psi_r(x^*)} \Big\}
\le A \frac{\phi_r(x^*)}{\phi_r(x')} \frac{\psi_r(x')}{\psi_r(x^*)}
\le \rho.
\]
\end{proof}

\begin{Lemma} \label{lem:geometric_convergence}
Assume that there exists a fixed point $\hat\zeta^*$ of $\hat \ct$ in the space of continuous non-negative functions. In the problem $\hat \ct \hz^*$, assume the existence of an optimal stopping time under which stopping occurs only at values bounded above by $x' < x^*$ (c.f. the comment after the previous lemma).
Then there is a constant $\rho < 1$ such that \( \| \hat\zeta^* - \hat \ct^n \mathbf{0}\|_\# \le \rho^n \| \hat\zeta^* \|_\# \)
and $\|\hat\zeta^* - \hat \ct^n \mathbf{0}\|_\infty \le \rho^{n-1} \| \hat\zeta^*\|_\#$, where $\|\cdot\|_\infty$ is the supremum norm.
\end{Lemma}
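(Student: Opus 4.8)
The plan is to obtain both bounds by a straightforward induction on $n$, using the contraction estimate of Lemma \ref{lem:contraction} repeatedly with the fixed point $\hat\zeta^*$ always playing the role of the ``upper'' function $\xi$ and the iterate $\hat\ct^{n-1}\mathbf{0}$ playing the role of $\xi'$. The two ingredients supplied by that lemma are the seminorm contraction $\|\hat\ct\xi-\hat\ct\xi'\|_\#\le\rho\|\xi-\xi'\|_\#$ with $\rho=A\,\psi_r(x')/\psi_r(x^*)<1$, and the pointwise bound $0\le\hat\ct\xi(x)-\hat\ct\xi'(x)<\|\xi-\xi'\|_\#$; the first drives the geometric decay in the seminorm $\|\cdot\|_\#$, and the second then upgrades this to the supremum norm.

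First I would check that the hypotheses of Lemma \ref{lem:contraction} are met at every step. I would take $\xi=\hat\zeta^*$ throughout, so the required properties of $\xi$ --- continuity, non-negativity, admissibility in the sense of Lemma \ref{lem:char}, and the existence of an optimal stopping time for $\hat\ct\xi$ whose stopping set is bounded above by $x'<x^*$ --- are exactly the standing hypotheses of the present lemma. Admissibility itself follows because $\hat\zeta^*=\hat\ct\hat\zeta^*$ is a value function of the form covered by Remark \ref{rem:const}, whence $\hat\zeta^*(x)/\phi_r(x)$ is non-increasing on $[x^*,b)$. For the lower function I would set $\xi'=\hat\ct^{n-1}\mathbf{0}$, which is continuous and non-negative by the properties recorded in Lemma \ref{lem:perp_profit}, and satisfies $\xi'\le\hat\zeta^*$ by monotonicity of $\hat\ct$ applied to $\mathbf{0}\le\hat\zeta^*$ together with the fixed-point identity $\hat\ct^{n-1}\hat\zeta^*=\hat\zeta^*$.

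With these verifications in place the induction for the first bound is immediate. Using $\hat\ct\hat\zeta^*=\hat\zeta^*$ and $\hat\ct(\hat\ct^{n-1}\mathbf{0})=\hat\ct^n\mathbf{0}$, the contraction estimate gives $\|\hat\zeta^*-\hat\ct^n\mathbf{0}\|_\#\le\rho\,\|\hat\zeta^*-\hat\ct^{n-1}\mathbf{0}\|_\#$, and since the base case $n=0$ reads $\|\hat\zeta^*-\mathbf{0}\|_\#=\|\hat\zeta^*\|_\#$, iterating yields $\|\hat\zeta^*-\hat\ct^n\mathbf{0}\|_\#\le\rho^n\|\hat\zeta^*\|_\#$. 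For the supremum-norm bound I would apply the pointwise estimate of Lemma \ref{lem:contraction} with $\xi'=\hat\ct^{n-1}\mathbf{0}$, obtaining $0\le\hat\zeta^*(x)-\hat\ct^n\mathbf{0}(x)<\|\hat\zeta^*-\hat\ct^{n-1}\mathbf{0}\|_\#$ for every $x$; taking the supremum over $x$ and inserting the already-established bound $\|\hat\zeta^*-\hat\ct^{n-1}\mathbf{0}\|_\#\le\rho^{n-1}\|\hat\zeta^*\|_\#$ gives $\|\hat\zeta^*-\hat\ct^n\mathbf{0}\|_\infty\le\rho^{n-1}\|\hat\zeta^*\|_\#$.

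The only real obstacle is the bookkeeping in the second paragraph: confirming that $\hat\zeta^*$ genuinely satisfies all the admissibility hypotheses of Lemma \ref{lem:char} rather than merely being a continuous non-negative fixed point, and that the ordering $\hat\ct^{n-1}\mathbf{0}\le\hat\zeta^*$ holds for every $n$. Both follow from monotonicity of $\hat\ct$ and the fixed-point property, but they must be checked so that Lemma \ref{lem:contraction} applies with the \emph{same} constant $\rho$ at each stage of the induction. Once these are secured the two displayed inequalities follow with no further computation.
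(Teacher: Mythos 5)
Your proposal is correct and follows essentially the same route as the paper: iterate the seminorm contraction of Lemma \ref{lem:contraction} with $\xi=\hat\zeta^*$ fixed (so the same $\rho$ applies at every step) to obtain $\|\hat\zeta^*-\hat\ct^n\mathbf{0}\|_\#\le\rho^n\|\hat\zeta^*\|_\#$, then apply the pointwise bound \eqref{eqn:ct_seminorm_bound} once, writing $\hat\zeta^*-\hat\ct^n\mathbf{0}=\hat\ct\hat\zeta^*-\hat\ct(\hat\ct^{n-1}\mathbf{0})$, to upgrade to the supremum norm. The additional bookkeeping you carry out (admissibility of $\hat\zeta^*$ via Remark \ref{rem:const} and the ordering $\hat\ct^{n-1}\mathbf{0}\le\hat\zeta^*$ from monotonicity) is left implicit in the paper but is exactly what justifies applying Lemma \ref{lem:contraction} at each stage.
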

\begin{proof}{Proof.}
Clearly, $\|\hat\zeta^* - \mathbf{0} \|_\# < \infty$. By virtue of Lemma \ref{lem:contraction} we have $\|\hat \ct^n \mathbf{0} - \hat\zeta^*\|_\# \le \rho^n \|\mathbf{0} - \hat\zeta^*\|_\#$ for $\rho =  \frac{\psi_r(x')}{\psi_r(x^*)} < 1$. Hence, $\hat \ct^n \mathbf{0}$ converges exponentially fast to $\hat\zeta^*$ in the seminorm $\|\cdot\|_\#$. Using \eqref{eqn:ct_seminorm_bound} we have
\[
\| \hat\zeta^* - \hat \ct^n \mathbf{0} \|_\infty = \| \hat \ct \hat\zeta^* - \hat \ct \circ \hat \ct^{n-1} \mathbf{0} \|_\infty \le \rho^{n-1} \| \hat\zeta^*\|_\#.
\]
\end{proof}

\section{Note on Lemma \ref{cor:suff_case_A}}
\label{sec:lemrmk}

The inequality $\lim_{x \to -\infty} \frac{-x}{\phi_r(x)} > 0$ when $a=-\infty$ asserts that the process $X$ escapes to $-\infty$ quickly. Indeed, choosing $z \in I$, we have $\ee^z \{ e^{-r\tau_x} \} = \frac{\phi_r(z)}{\phi_r(x)}$ for $x \le z$, hence $\ee^z \{ e^{-r\tau_{x}} \} \ge \frac{c}{-x}$ for some constant $c > 0$ and $x$ sufficiently close to $-\infty$. To illustrate the speed of escape, assume for simplicity that $X$ is a deterministic process.
Then the last inequality would imply $\tau_x \le \frac{1}{r} \big(\log(-x) - \log(c)\big)$, i.e., $X$ escapes to $-\infty$ exponentially quickly. 

An example of a model that violates the assumptions of Lemma \ref{cor:suff_case_A} is the negative geometric Brownian motion: $X_t = - \exp \big((\mu - \sigma^2/2) t + \sigma W_t\big)$ for $\mu, \sigma > 0$. With the generator $\mathcal{A} = \frac{1}{2}\sigma^2 x^2 \frac{d^2}{dx^2} + \mu x \frac{d}{dx}$, we have $\phi_r(x) = (-x)^{\gamma_2}$ and $\psi_r(x) = (-x)^{\gamma_1}$, where $\gamma_1 < 0 < \gamma_2$ are solutions to the quadratic equation $\frac{\sigma^2}{2} \gamma^2 + (\mu - \frac{\sigma^2}{2}) \gamma - r = 0$, i.e., $
\gamma = B \pm \sqrt{B^2 + 2 \frac{r}{\sigma^2}}$ with $B = \frac12 - \frac{\mu}{\sigma^2}$. Hence, $\lim_{x \to -\infty} \frac{-x}{\phi_r(x)}  = \lim_{x \to -\infty} (-x)^{1-\gamma_2} > 0$ if and only if $\gamma_2 \le 1$. It is easy to check that $\gamma_2 = 1$ for $\mu = r$ and $\gamma_2$ is decreasing as a function of $\mu$. Therefore, the condition $\gamma_2 \le 1$ is equivalent to $\mu \ge r$.

In summary, the negative geometric Brownian motion violates the assumptions of Lemma \ref{cor:suff_case_A} if $\mu \ge r$. If $\mu=r$ then case $B$ of Theorem \ref{thm:hammerA} applies with $L_c=1$, while if $\mu > r$ then $L_c = \infty$ and so case C applies. Both cases may be interpreted heuristically as the negative geometric Brownian motion $X$ escaping `relatively quickly' to $-\infty$, that is, relative to the value $r$ of the continuously compounded interest rate. In the latter case this happens sufficiently quickly that the single option value function $V_c$ is infinite.

\section{Facts about the OU process}\label{sec:oufacts}

Let us temporarily fix $\mu=0$ and $\theta=\sigma=1$. Consider the ordinary differential equation (ODE)
\[
w''(z) + \left(\nu + \frac12 - \frac14 z^2\right) w(z) = 0.
\]
There are two fundamental solutions $D_{\nu} (z)$ and $D_\nu (-z)$, where $D_\nu$ is a parabolic cylinder function. Assume that $\nu < 0$. This function has a multitude of representations, but the following will be sufficient for our purposes \citep[p. 119]{erdelyi1953higher}:
\[
D_\nu(z) = \frac{e^{-z^2/4}}{\Gamma(-\nu)} \int_0^\infty e^{-zt - \frac12 t^2} t^{-\nu-1} dt.
\]
Then $D_\nu$ is strictly positive. 
Fix $r > 0$. Define
\[
\psi_r(x) = e^{\frac {(x-\mu)^2 \theta}{2\sigma^2}} D_{-r/\theta} \Big(-\frac{(x - \mu)\sqrt{2 \theta}}{\sigma}\Big), \qquad \phi_r(x) = e^{\frac {(x - \mu)^2 \theta}{2\sigma^2}} D_{-r/\theta} \Big(\frac{(x - \mu) \sqrt{2 \theta}}{\sigma}\Big).
\]
By direct calculation one verifies that these functions solve
\begin{equation}\label{eqn:OU_fund_eqn}
\mathcal{L} v = r v,
\end{equation}
where
\begin{equation}\label{eqn:OU_L}
 \mathcal{L} v(x) = \frac12 \sigma^2 v''(x)  + \theta (\mu - x) v'(x)
\end{equation}
is the infinitesimal generator of the OU process \eqref{eq:ousde}. Setting $\nu = -r/\theta$ we can write
\[
\psi_r(x) = \frac1{\Gamma(-\nu)} \int_0^\infty e^{(x-\mu)t \frac{\sqrt{2\theta}}{\sigma} - \frac12 t^2} t^{-\nu-1} dt,
\qquad
\phi_r(x) = \frac1{\Gamma(-\nu)} \int_0^\infty e^{-(x-\mu)t \frac{\sqrt{2\theta}}{\sigma} - \frac12 t^2} t^{-\nu-1} dt.
\]
Hence $\psi_r$ is increasing and $\phi_r$ is decreasing in $x$. Also, by monotone convergence $\psi_r(-\infty) = \phi_r(\infty) = 0$ and $\psi_r(\infty) = \phi_r(-\infty) = \infty$. The functions $\psi_r$ and $\phi_r$ are then fundamental solutions of the equation \eqref{eqn:OU_fund_eqn}. Further they  are strictly convex, which can be checked by passing differentiation under the integral sign (justified by the dominated convergence theorem). 
Defining $F(x) = \psi_r(x) / \phi_r(x)$, then $F$ is continuous and strictly increasing with $F(-\infty) = 0$ and $F(\infty) = \infty$.

Using the integral representation of $\phi_r$ and l'H\^opital's rule we have
\begin{equation}\label{eqn:dx_phi_limit}
\begin{aligned}
\lim_{x \to -\infty} \frac{-x}{\phi_r(x)} &= \lim_{x \to -\infty} \frac{-1}{\frac1{\Gamma(-\nu)} \int_0^\infty e^{-(x-\mu)t \frac{\sqrt{2\theta}}{\sigma} - \frac12 t^2} \Big(-t \frac{\sqrt{2\theta}}{\sigma} \Big)t^{-\nu-1} dt}\\
&= \frac{\sigma}{\sqrt{2\theta}} \lim_{x \to -\infty} \frac{1}{\frac1{\Gamma(-\nu)} \int_0^\infty e^{-(x-\mu)t \frac{\sqrt{2\theta}}{\sigma} - \frac12 t^2} t^{-\nu} dt}\\
&= \frac{\sigma}{\sqrt{2\theta}} \lim_{x \to -\infty} \frac{1}{\frac{\Gamma(-\nu+1)}{\Gamma(-\nu)} \frac{1}{\Gamma(-\nu+1)} \int_0^\infty e^{-(x-\mu)t \frac{\sqrt{2\theta}}{\sigma} - \frac12 t^2} t^{-\nu} dt}
= 0,
\end{aligned}
\end{equation}
as the denominator is a scaled version of $\phi_{\tilde r}$ corresponding to a new $\tilde r$ such that $-\tilde r/\theta = \nu-1 < \nu < 0$, and so it converges to infinity when $x \to -\infty$.

\end{appendices}

\end{document}